
\documentclass[12pt, notitlepage]{amsart}

\usepackage{amssymb}
\usepackage{amscd}
\usepackage{amsfonts}
\usepackage{setspace}
\usepackage{version}
\usepackage{endnotes}
\usepackage{float}
\usepackage{url}


\newtheorem{theorem}{Theorem}[section]
\newtheorem{lemma}[theorem]{Lemma}
\newtheorem{proposition}[theorem]{Proposition}

\theoremstyle{remark}
\newtheorem{remark}[theorem]{Remark}

\theoremstyle{definition}

\theoremstyle{example}

\numberwithin{equation}{section}



\newcommand{\Z}{\mathbb{Z}}
\newcommand{\R}{\mathbb{R}}

\newcommand{\C}{\mathbb{C}}
\newcommand{\E}{\mathbb{E}}

\newcommand{\e}{{\rm e}}

\newcommand{\dd}{{\rm d}}
\newcommand{\ee}{\varepsilon}
\newcommand{\1}{\mathbf{1}}

\oddsidemargin -0.25in \evensidemargin -0.25in \textwidth 6.5in

\sloppy \flushbottom
\parindent 1em

\marginparwidth 48pt \marginparsep 10pt \columnsep 10mm

\begin{document}

\title{Vinogradov's theorem with almost equal summands}

\author{Kaisa Matom\"aki}
\address{Department of Mathematics and Statistics\\ University of Turku \\
20014 Turku \\ Finland}
\email{ksmato@utu.fi}
\thanks{KM was supported by Academy of Finland grant no. 285894.}

\author{James Maynard}
\address{Mathematical Institute\\ Radcliffe Observatory Quarter\\ Woodstock Road\\ Oxford OX2 6GG \\ United Kingdom}
\email{james.alexander.maynard@gmail.com}
\thanks{JM was supported by a Clay Research Fellowship and a Fellowship at Magdalen College, Oxford.}

\author{Xuancheng Shao}
\address{Mathematical Institute\\ Radcliffe Observatory Quarter\\ Woodstock Road\\ Oxford OX2 6GG \\ United Kingdom}
\email{Xuancheng.Shao@maths.ox.ac.uk}
\thanks{XS was supported by a Glasstone Research Fellowship.}

\subjclass[2010]{11P32, 11P55, 11N35}

\maketitle

\begin{abstract}
Let $\theta > 11/20$. We prove that every sufficiently large odd integer $n$ can be written as a sum of three primes $n = p_1 + p_2 + p_3$ with $|p_i - n/3| \leq n^{\theta}$ for $i\in\{1,2,3\}$.
\end{abstract}


\section{Introduction}


Vinogradov's celebrated theorem shows that every large odd integer $N$ is the sum of three primes (see e.g. \cite[Chapter 26]{Davenport}), and was achieved by using Vinogradov's exponential sum estimates in the Hardy-Littlewood circle method. The circle method and exponential sums are both powerful general tools in analytic number theory for showing the existence of solutions to additive problems in a particular set, but it becomes increasingly difficult to apply these methods successfully as the set involved becomes increasingly sparse.

The simplest extension of Vinogradov's theorem to a sparse setting is to restrict all the primes involved to lie in a short interval $\mathcal{I}=[N/3-N^\theta,N/3+N^\theta]$. If the intervals are not too short, then one can use results about the density of zeros of the Riemann zeta function to control the exponential sums involved. By using the Huxley zero density estimate, Pan and Pan \cite{pan3} showed that if $\theta>2/3$ then every large odd integer $N$ is the sum of three primes from $\mathcal{I}$, improving on earlier work \cite{haselgrove,pan,chen,pan2}. Further progress used sieve methods to construct majorants and minorants of the indicator function of the primes which were easier to use in the exponential sum estimates. This resulted in a series of papers \cite{jia1,jia2,jia3,jia4,jia5,jia6,zhan,jia7}, the strongest being Baker and Harman's result \cite{Baker-Harman-4/7} that showed $\theta>4/7=0.5714\dots$ is sufficient, and this appears to come close to the limit of such techniques\footnote{Baker and Harman note that, whilst small improvements might be available `the value 11/20 seems out of reach without a substantial new idea'.}.

Green \cite{Green3AP} introduced a transference principle in his proof of Roth's theorem in the primes. Such a  principle provides a powerful tool to study additive problems, and for translation-invariant additive problems one can show the existence of solutions in any dense set which has a `well-controlled' majorant. In particular, essentially no lower-bound information is required beyond positive density. Vinogradov's theorem with the primes in special sets is not a translation-invariant problem (and the result does not hold for all sets of positive density), but the first and third authors \cite{MatomakiDensity,ShaoDensity,ShaoMatomaki} have shown how to use transference arguments to establish Vinogradov's theorem with certain reasonable dense subsets of the primes. In these cases one only requires the existence of elements in Bohr sets as extra lower-bound information.

In this paper we combine a transference principle with the previous work on Vinogradov's theorem with almost equal summands. The transference principle removes the need for such strong bounds for the sieve minorant occurring in the work of Baker and Harman, which then allows us to show Vinogradov's theorem with primes in shorter intervals.


\begin{theorem}\label{thm:vin-theta}
Let $\theta > 11/20$. Every sufficiently large odd integer $n$ can be written as a sum of three primes $n = p_1 + p_2 + p_3$ with $|p_i - n/3| \leq n^{\theta}$ for $i\in\{1,2,3\}$.
\end{theorem}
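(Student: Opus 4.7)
The plan is to combine the Hardy--Littlewood circle method with a transference principle, the latter allowing us to replace the strong sieve-minorant step in the Baker--Harman approach by a much more robust dense-model argument. Set $X = n^\theta$, $\mathcal{I} = [n/3 - X, n/3 + X]$, and $S(\alpha) = \sum_{p \in \mathcal{I}} (\log p) e(\alpha p)$, so that the weighted number of representations $r(n)$ of $n$ as a sum of three primes in $\mathcal{I}$ equals $\int_0^1 S(\alpha)^3 e(-n\alpha)\,d\alpha$. I would perform a Farey dissection of $[0,1]$ into major arcs $\mathfrak{M}$ (neighborhoods of rationals $a/q$ with $q$ bounded by a power of $\log n$) and minor arcs $\mathfrak{m} = [0,1]\setminus\mathfrak{M}$. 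On $\mathfrak{M}$, a standard calculation using the prime number theorem in short arithmetic progressions — valid throughout our range thanks to Huxley-type zero-density estimates for Dirichlet $L$-functions — yields the expected main term of size $\asymp \mathfrak{S}(n)X^2/(\log n)^3$, the singular series $\mathfrak{S}(n)$ being bounded below for odd $n$.

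For the minor arcs, instead of the pointwise bound $\sup_\mathfrak{m} |S(\alpha)| = o(X/\log n)$ — which is out of reach near $\theta = 11/20$ — I would construct a sieve majorant $\nu$ satisfying $\mathbf{1}_{\text{prime}\cap\mathcal{I}}\log \leq C\nu$ of correct mean order, using only an upper sieve (Selberg- or Harman-style). The key properties required of $\nu$ are (i) the pseudorandomness / linear-forms conditions underwriting the dense model theorem, and (ii) a minor arc restriction estimate of the form
\begin{equation*}
\int_\mathfrak{m} \Bigl|\sum_{m \in \mathcal{I}} \nu(m) e(\alpha m)\Bigr|^q \, d\alpha \ll_A \frac{X^{q-1}}{(\log n)^A}
\end{equation*}
for some $q$ slightly greater than $2$ and every fixed $A$. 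Granted these, the transference principle of Green \cite{Green3AP}, in the short-interval form developed in \cite{MatomakiDensity, ShaoDensity, ShaoMatomaki}, replaces the prime counting problem on $\mathcal{I}$ by a density counting problem for a bounded function $g$ on $\Z/n\Z$ of positive density; standard Bohr-set arguments then produce three-term additive representations of $n$ for large odd $n$, yielding $r(n) > 0$.

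The chief obstacle is establishing the restriction estimate at the threshold $\theta > 11/20$. This should require a Heath--Brown identity (or Vaughan decomposition) of the von Mangoldt function, combined with the strongest available mean-value estimates for exponential sums over primes in short intervals, themselves built on Huxley-type zero density exponents. The sieve level for $\nu$ must be tuned so that these mean-value inputs barely suffice: the value $11/20$ presumably emerges as the precise breaking point, reflecting exactly the improvement gained by relaxing the two-sided sieve requirement at $4/7$ to a one-sided upper-sieve requirement. A secondary but nontrivial technical burden will be verifying the pseudorandomness / linear-forms hypotheses for a sieve majorant of such a high level, and cleanly handling the passage between the integer interval $\mathcal{I}$ and the cyclic group $\Z/n\Z$ that the transference framework operates on.
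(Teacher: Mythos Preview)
Your outline captures the high-level idea (transference replaces the need for a strong minorant) but misidentifies the mechanism and, critically, the source of the exponent $11/20$. The transference step does not reduce to a generic ``positive density plus Bohr-set arguments'' claim: the problem is not translation-invariant, so after passing to a bounded dense model $g$ one needs $g$ to have density strictly exceeding $1/3$ on all not-too-short progressions (this is what forces $A_1+A_2+A_3$ to cover the target, via a doubling-less-than-$4$ structure theorem of Eberhard--Green--Manners). Unwinding the normalization, this density is $\alpha^-/\alpha^+$, where $\alpha^-$ is the numerical constant in the prime-in-short-interval lower bound and $\alpha^+$ is the average of the majorant; the entire argument hinges on the inequality $\alpha^+ < 3\alpha^-$. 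The threshold $11/20$ therefore comes \emph{not} from a restriction estimate for the majorant but from the Baker--Harman--Pintz result that one may take $\alpha^- = 0.99$ precisely for $\theta > 11/20$ (their argument is discontinuous at $11/20$). The majorant construction and its Fourier control are continuous in $\theta$ and would work below $11/20$.

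A second concrete gap: a Selberg-type upper sieve gives $\alpha^+ \geq 2/\theta \approx 3.6$, which violates $\alpha^+ < 3\alpha^- < 3$ fatally. One must use a Harman-sieve majorant (Buchstab iterations leaving in controlled Type~II sums) to push $\alpha^+$ below $2.9$. Finally, no linear-forms/pseudorandomness conditions are needed---the transference is purely Fourier-analytic (decompose $f$ into a bounded structured part and a Fourier-small part using a Bohr set on the large spectrum)---and the required restriction estimate $\|\widehat{f}\|_q \ll N^{1-1/q}$ for some $q \in (2,3)$ is supplied directly by Green--Tao and does not depend on $\theta$.
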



Our method might allow one to obtain the result with a value of $\theta$ slightly below 11/20 at the cost of significant extra complication. The best known result on primes in short intervals due to Baker, Harman and Pintz \cite{BHP01} shows the existence of primes in intervals $[x,x+x^{0.525}]$, and so one certainly cannot improve our constant by more than $0.025$ without significant new ideas which would also improve this problem. Moreover, our method requires a good numerical lower bound on the number of primes in a short interval, which is known for $\theta > 11/20$ by work of Baker, Harman and Pintz~\cite{BHP97}.

Simple probabilistic heuristics would suggest that Theorem \ref{thm:vin-theta} should hold whenever $\theta>0$, but all known arguments appear to completely fail unless $\theta>1/2$, even if one assumes strong conjectures such as the Generalized Riemann Hypothesis.

We hope that the ideas in this paper (particularly the combination of ideas from additive combinatorics with Dirichlet polynomial techniques) might have further applications - see, for example, Remark \ref{rem:Harman}.


\section{Outline}


Imagine there exists an interval $I \subset \mathbb{R}/\mathbb{Z}$ of length $1/3-\varepsilon$ and a phase $\xi\in \R/\Z$ such that every prime $p$ in $[x,x+x^\theta]$ lies in the shifted Bohr set 
\[\mathcal{B}=\{n\in\mathbb{Z}:\, \xi n \pmod{1} \in I\}.\]
Under this assumption, if $p_1,p_2,p_3\in[x,x+x^\theta]$ then we would have that $\xi(p_1+p_2+p_3)\pmod{1}$ would always lie in the sumset $I+I+I \pmod{1}$, which is an interval of length $1-3\varepsilon$ in $\mathbb{R}/\mathbb{Z}$. In particular, it would \textit{not} be the case that every large odd integer $N\in [3x+x^\theta,3x+2x^\theta]$ could be represented as the sum of three primes from $[x,x+x^{\theta}]$, since these odd integers are equidistributed modulo 1 when multiplied by $\xi$, assuming $\xi$ is in the minor arcs. Thus a necessary condition for Vinogradov's theorem with almost equal summands is that there is no such $\xi$ and $I$.

To show the existence of primes in $[x,x+x^\theta]\backslash\mathcal{B}$ for a general Bohr set $\mathcal{B}$, one would typically first construct a sieve minorant for the indicator function of the primes, and then demonstrate a positive lower bound for this sieve minorant summed over the Bohr set by Fourier expansion of the indicator function of the Bohr set and using results on the sieve minorant twisted by exponential phases. Unfortunately for short intervals it becomes difficult to obtain adequate bounds for sieve minorants twisted by exponential phases. To rule out the above possibility, however, we only need to consider sets $\mathcal{B}$ coming from an interval $I$ of length at most $1/3-\varepsilon$. Thus by inclusion-exclusion
\begin{equation}\label{eq:BohrInEx}
\#\{p\in[x,x+x^\theta]\backslash\mathcal{B}\}=\#\{p\in[x,x+x^\theta]\}-\#\{p\in[x,x+x^\theta]\cap\mathcal{B}\}.
\end{equation}
There are several results in the literature obtaining good lower bounds for the first term in \eqref{eq:BohrInEx} of the form $\alpha^-x^\theta/\log{x}$ for some constant $\alpha^-$ depending on $\theta$. Thus to show the left hand side is positive, it suffices to get a suitable upper bound for the second term. By using results on sieve majorants twisted by exponential phases, one typically obtains an upper bound of the form $\alpha^+ |I| x^\theta/\log{x}$ for some constant $\alpha^+$ depending on $\theta$. Thus we obtain a positive lower bound for \eqref{eq:BohrInEx} provided $\alpha^+<3\alpha^-$. Since it is generally easier to construct sieve majorants, this seems to be the best way to show that the left hand side is positive, and so is necessary for a proof of Vinogradov's theorem with almost equal summands in the absence of new ideas.

It turns out that such arithmetic information is actually essentially \textit{sufficient} to deduce a version of Vinogradov's theorem with almost equal summands. In particular, the main principle of proving Theorem~\ref{thm:vin-theta} is summarized in the following result. Denote by $\rho$ the indicator function for the primes, so that $\rho(n) = 1$ if $n$ is prime and $\rho(n) = 0$ otherwise.


\begin{theorem}\label{thm:vin3}
Let $\theta, \ee \in (0,1)$ and let $x$ be large. Let $\rho^+$ be a function with $\rho(n) \leq \rho^+(n)$ for every $n \in [x-x^{\theta}/3, x+x^{\theta}]$. Let $\alpha^+, \alpha^-, \eta > 0$. Assume that the following conditions hold:
\begin{enumerate}
\item Let $W = \prod_{p \leq w}p$ where $w = 0.1\log\log x$. For every residue class $c \pmod W$ with $(c,W)=1$, and every $\gamma \in \R$, we have
\[ \left|\sum_{\substack{x-x^{\theta}/3 \leq n \leq x+x^{\theta} \\ n \equiv c\pmod W}} \rho^+(n) \e(n\gamma) - \frac{\alpha^+}{\log x} \cdot \frac{W}{\varphi(W)} \sum_{\substack{x-x^{\theta}/3 \leq n \leq x+x^{\theta} \\ n \equiv c\pmod W}} \e(n\gamma)\right| \leq \frac{\eta x^{\theta}}{\varphi(W)\log x}. \]
\item For every interval $I \subset [x-x^{\theta}/3, x+x^{\theta}]$ of length $|I| \geq x^{\theta-\ee}$, and every residue class $c\pmod d$ with $(c,d) = 1$ and $d \leq \log x$, we have
\[ \sum_{\substack{n \in I \\ n\equiv c\pmod{d}}} \rho(n) \geq \frac{\alpha^- |I|}{\varphi(d)\log x}. \]
\end{enumerate}
If $\alpha^+ < 3\alpha^-$ and $\eta$ is small enough in terms of $3\alpha^- - \alpha^+$, then any odd integer in $[3x-x^{\theta}/3, 3x+x^{\theta}/3]$ can be written as a sum of three primes in $[x-x^{\theta}/3,x+x^{\theta}]$.
\end{theorem}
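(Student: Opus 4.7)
The plan is to follow the dense-model/transference strategy of Green \cite{Green3AP}, as developed for short intervals in \cite{ShaoMatomaki}. After a standard $W$-trick, pick residues $b_1, b_2, b_3 \in (\Z/W\Z)^\times$ with $b_1+b_2+b_3 \equiv N \pmod{W}$ (possible since $N$ is odd), and rescale by setting
\[ a_i(n)\assign\frac{\varphi(W)\log x}{\alpha^-}\rho(Wn+b_i)\1_J(Wn+b_i),\qquad A_i(n)\assign\frac{\varphi(W)\log x}{\alpha^-}\rho^+(Wn+b_i)\1_J(Wn+b_i), \]
with $J=[x-x^\theta/3,x+x^\theta]$, so $a_i\leq A_i$ and both are supported on an interval of length $L\asymp x^\theta/W$. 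Hypothesis~(1) rephrases as the Fourier-pseudorandomness statement $\|\widehat{A_i}-(\alpha^+/\alpha^-)\widehat{\1}_{J_i'}\|_\infty\ll\eta L$, where $J_i'$ is the image of $J$ under $n\mapsto(n-b_i)/W$. Hypothesis~(2) says that each $a_i$ has mean at least $1-o(1)$ on every sub-progression $P$ of its support of modulus $q\leq\log x$ and length $\geq L^{1-O(\ee)}$.

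I would then invoke a one-dimensional dense-model theorem to obtain bounded models $g_i:\Z\to[0,\alpha^+/\alpha^-+o(1)]$ with $\sum_n g_i=\sum_n a_i$ and $\|\widehat{a_i}-\widehat{g_i}\|_\infty\leq\eta^{c}L$ for a small absolute $c>0$, exploiting the fact that $A_i$ is a Fourier-pseudorandom majorant of $a_i$. Because the indicator of any sub-progression $P$ as above has $\ell^1$ Fourier mass $O(\log^{O(1)}x)$ (via a Fej\'er-kernel decomposition), the uniform Fourier closeness transfers the short-AP density bound to the model: $\sum_{n\in P}g_i(n)\geq(1-o(1))|P|$ for every such $P$.

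The third step is the circle method. Set $N'\assign(N-b_1-b_2-b_3)/W$ and
\[ R(N')\assign\sum_{n_1+n_2+n_3=N'}a_1(n_1)a_2(n_2)a_3(n_3)=\int_0^1\widehat{a_1}(\gamma)\widehat{a_2}(\gamma)\widehat{a_3}(\gamma)\,\e(-N'\gamma)\,\dd\gamma, \]
and similarly $R_g(N')$. Telescoping three times, the error $R(N')-R_g(N')$ is bounded by $\|\widehat{a_i}-\widehat{g_i}\|_\infty\cdot\|\widehat{A_j}\|_2\|\widehat{A_k}\|_2=o(L^2)$ via Parseval and hypothesis~(1). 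For $R_g(N')$ itself, the short-AP density forces $\widehat{g_i}$ to be essentially concentrated on major arcs (rationals $a/q$ with $q\leq L^{O(\ee)}$); on minor arcs, $\|g_i\|_\infty=O(1)$ plus Parseval yields $o(L^2)$, while on major arcs the equidistribution of $g_i$ produces a main term of size $(3\alpha^--\alpha^++o(1))\cdot c(\alpha^-,\alpha^+)\cdot L^2/\log x$, which is strictly positive. Combining, $R(N')>0$, giving the desired representation.

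The main obstacle I anticipate is the simultaneous calibration in the transference step: the model $g_i$ must be (a) uniformly bounded, (b) Fourier-close to $a_i$ in $\ell^\infty$, and (c) dense on \emph{short} sub-intervals and sub-APs of its support. Properties (a) and (b) are standard consequences of the pseudorandom majorant structure; property (c), which is what ultimately uses hypothesis~(2) nontrivially, requires smoothing at a scale finer than $L^{-\ee}$ and forces $\eta$ to be quantitatively small in terms of $3\alpha^--\alpha^+$ and $\ee$. With this calibration in hand, the circle-method estimates above deliver positivity of the representation count.
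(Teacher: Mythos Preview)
Your proposal has two genuine gaps, both in the third step.

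First, the bound $R(N')-R_g(N')=o(L^2)$ via Parseval does not follow. You invoke $\|\widehat{A_j}\|_2\|\widehat{A_k}\|_2$, but hypothesis~(1) gives no control whatsoever on $\|A_j\|_2$: the majorant $\rho^+$ is allowed to be arbitrarily spiky. If instead you use $\|\widehat{a_j}\|_2=\|a_j\|_2$, note that $a_j$ takes the values $0$ and $\asymp\log x$, so $\|a_j\|_2^2\asymp L\log x$; your error estimate then reads $\eta^{c}L\cdot L\log x$, which is not $o(L^2)$ for any fixed $\eta>0$. The paper circumvents this by invoking the Green--Tao restriction theorem for primes, giving $\|\widehat{f_i}\|_q\ll N^{1-1/q}$ for some $2<q<3$, and bounding the seven cross terms via H\"older as $\|\widehat{h_1}\|_\infty^{3-q}\|\widehat{h_1}\|_q^{q-2}\|\widehat{h_2}\|_q\|\widehat{h_3}\|_q$; this is precisely why hypothesis~(3) appears in Proposition~\ref{prop:kneser-sparse}.

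Second, and more seriously, your circle-method treatment of $R_g(N')$ is based on a false premise. The claim that ``short-AP density forces $\widehat{g_i}$ to be essentially concentrated on major arcs'' is simply wrong. Consider $g(n)=1+\tfrac12\cos(2\pi\alpha n)$ on $[1,L]$ for a badly approximable irrational $\alpha$. This function is bounded by $3/2$, has mean $1+O(q^2/|P|)=1+o(1)$ on every progression $P$ of modulus $q\le\log x$ and length $\ge L^{1-O(\ee)}$, and yet $|\widehat g(\alpha)|\asymp L$ at the minor-arc frequency $\alpha$. So neither the minor-arc bound nor the asserted ``main term of size $(3\alpha^--\alpha^+)\cdot c\cdot L^2/\log x$'' can be extracted from boundedness plus AP-density. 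What the paper actually does for the bounded model is a \emph{combinatorial} argument: Proposition~\ref{prop:doubling4} (derived from the Eberhard--Green--Manners structure theorem for sets of doubling less than $4$) shows that if $A_1,A_2\subset[N']$ each have density $>1/3+\ee$ on all not-too-short progressions, then the set of popular sums $S_\eta(A_1,A_2)$ has size $>(4/3+\ee)N'$; intersecting $n_0-S_\eta(A_1,A_2)$ with the (density $>1/3$) support of $g_3$ then forces a representation. The role of the hypothesis $\alpha^+<3\alpha^-$ is exactly to cross the $1/3$-density threshold needed for this sumset argument, not to produce a Fourier main term.
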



For $\theta = 11/20 + 2\varepsilon$, we may take $\alpha^- = 0.99$ by the work of Baker, Harman, and Pintz~\cite{BHP97}. Note that their argument is not continuous in $\theta$ at point $11/20$, so without significant numerical work it is not clear how large $\alpha^-$ one could obtain for $\theta = 11/20 - \varepsilon$. Given the result in~\cite{BHP97} our main analytic input to deduce Theorem~\ref{thm:vin-theta} from Theorem~\ref{thm:vin3}, roughly speaking, is the construction of a sieve majorant $\rho^+$, whose Fourier transform can be understood asymptotically, and which is, on average, less than three times the indicator function for the primes. This will be provided in Proposition~\ref{prop:SAz} from Section \ref{sec:rho+}.

In comparison, Baker and Harman~\cite{Baker-Harman-4/7} require an analogue of $(1)$ of Theorem \ref{thm:vin3} for a minorant for the primes as well as a majorant. Naturally the minorant is much harder to construct, and by avoiding the requirement to have such strong control when twisting by an exponential phase we can improve the exponent in the short intervals from $4/7$ to $11/20$. It is the use of the transference principle used to prove Theorem \ref{thm:vin3} which is the key to allowing us to relax such a condition to the simpler bound for the minorant given by $(2)$, and which is the ``substantial new idea" sought by Baker and Harman.


\begin{remark}\label{rem:5/9}
The first and third authors \cite[Theorem 2.3]{ShaoMatomaki} used a transference principle to show that if one has a set $\mathcal{A}$ of integers which have a suitably large intersection with \textit{all} shifted Bohr sets (with controlled complexity), and a well-controlled superset in terms of its Fourier transform, then one can deduce that all sufficiently large integers are a sum of three elements of $\mathcal{A}$. This result could be adapted to our situation of short intervals, and would then correspond to constructing a suitable sieve minorant which gave a positive lower bound to \eqref{eq:BohrInEx} for any shifted Bohr set $\mathcal{B}$ (with controlled complexity). As mentioned previously, this becomes difficult when the interval becomes short. A recent result by Harman~\cite{HarmanBeatty} should imply a lower bound of the form $\#\{p\in[x,x+x^\theta]\cap\mathcal{B}\}\gg x^\theta/\log{x}$ for $\theta > 5/9$, and thus in principle such an argument should lead to a proof of Theorem~\ref{thm:vin-theta} for $\theta > 5/9$.
\end{remark}


\begin{remark}\label{rem:Harman}
Our proof of a positive lower bound for \eqref{eq:BohrInEx} has an immediate consequence for the existence of primes in Beatty sequences in short intervals. Harman \cite{HarmanBeatty} has shown that for any fixed irrationals $\xi>1$ and $\eta$, there are primes of the form $\lfloor \xi n+\eta\rfloor$ lying in the interval $[x,x+x^\theta]$ provided $\theta>5/9$ and $x$ is sufficiently large. Since $p=\lfloor \xi n+\eta\rfloor$ is equivalent to $n+\eta-1/\xi<p/\xi\le n+\eta$, we see that it suffices to show primes $p$ in the interval $[x,x+x^\theta]$ such that $p/\xi\pmod{1}$ lies in an interval of length $1/\xi$. Thus for $1<\xi<3/2$ we obtain Harman's result in the wider range $\theta>11/20$. We thank Glyn Harman for this observation.
\end{remark}


\section{Overview of the proof of Theorem~\ref{thm:vin3}}


Since we will be doing Fourier analysis on $\Z$, it is natural to adopt the following normalization for the Fourier transform: for a finitely supported function $f: \Z \to \C$ define
\[ \widehat{f}(\gamma) = \sum_n f(n) \e(-n\gamma). \]
For two finitely supported functions $f,g: \Z \to \C$, the convolution $f*g$ is defined by
\[ f*g(n) = \sum_m f(m) g(n-m). \]
The following combinatorial statement is a transference principle from which Theorem~\ref{thm:vin3} can be deduced immediately. Here, and in the sequel, we write $[N]$ for the discrete interval $\{1,2,\cdots,N\}$.


\begin{proposition}\label{prop:kneser-sparse}
Let $\ee, \eta \in (0,1)$. Let $N$ be a positive integer and let $f_1,f_2,f_3: [N] \to \R_{\geq 0}$ be functions, with each $f \in \{f_1,f_2,f_3\}$ satisfying the following assumptions:
\begin{enumerate}
\item For each arithmetic progression $P \subset [N]$ with $|P| \geq \eta N$ we have $\E_{n \in P} f(n) \geq 1/3+\ee$;
\item There exists a majorant $\nu: [N] \to \R_{\geq 0}$ with $f \leq \nu$ pointwise, such that $\|\widehat{\nu} - \widehat{\1_{[N]}}\|_{\infty} \leq \eta N$;
\item We have $\|\widehat{f}\|_q \leq K N^{1-1/q}$ for some fixed $q,K$ with $K \geq 1$ and $2 < q < 3$.
\end{enumerate}
Then for each $n \in [N/2, N]$ we have
\[ f_1*f_2*f_3(n) \geq (c(\ee) - O_{\ee, K, q}(\eta)) N^2, \] where $c(\ee) > 0$ is a constant depending only on $\ee$.
\end{proposition}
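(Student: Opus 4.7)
The plan is to apply a transference principle of the type developed in the first and third authors' earlier work on Roth-type problems in the primes. The strategy has three steps: (i) replace each $f_i$ by a bounded ``dense model'' $f_i^\sharp : [N] \to [0, 1+o(1)]$ preserving the long-AP density assumption; (ii) establish a pointwise lower bound $f_1^\sharp * f_2^\sharp * f_3^\sharp(n) \geq c(\ee) N^2$ for $n \in [N/2, N]$ via a dense Kneser-type argument; and (iii) bound the error $f_1 * f_2 * f_3 - f_1^\sharp * f_2^\sharp * f_3^\sharp$ using the restriction estimate from assumption~(3).

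For Step~(i), I would combine the pseudorandomness of $\nu$ from~(2), i.e.\ $\|\widehat{\nu} - \widehat{\1_{[N]}}\|_\infty \leq \eta N$, with a dense model theorem of Green--Tao type. Equivalently, one can construct $f_i^\sharp = f_i * \beta_B$ for a suitable Bohr-set kernel $\beta_B$ whose complexity is controlled by the spectrum $\Lambda = \bigcup_i \{\gamma : |\widehat{f_i}(\gamma)| \geq \delta N\}$, which has size $\ll (K/\delta)^q$ by~(3). The output should be $f_i^\sharp : [N] \to [0, 1+o(1)]$ with $\|\widehat{f_i} - \widehat{f_i^\sharp}\|_\infty \leq \eta' N$ for some $\eta' = \eta'(\eta, K, q)$ tending to zero as $\eta \to 0$, and with the AP density carried over: $\E_{n \in P} f_i^\sharp(n) \geq 1/3 + \ee - o(1)$ for every AP $P \subseteq [N]$ of length $\geq \eta N$.

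For Step~(ii), once each $f_i^\sharp$ is pointwise bounded and has uniform long-AP density exceeding $1/3 + \ee/2$, the triple convolution is a problem purely about dense subsets of intervals. A Kneser--Pl\"unnecke argument applies: the level sets $A_i = \{f_i^\sharp \geq \ee/10\}$ each have density strictly greater than $1/3$ on every long AP, so their triple sumset covers $[N/2, N]$ with representation count $\gg N^2$; weighting by the values of $f_i^\sharp$ yields the desired pointwise lower bound $c(\ee) N^2$ uniformly on $[N/2, N]$.

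For Step~(iii), expand the difference into seven cross terms, each containing at least one factor $g_i := f_i - f_i^\sharp$. By Parseval, a representative term equals $\int_0^1 \widehat{g_1}(\gamma) \widehat{f_2}(\gamma) \widehat{f_3}(\gamma) \e(n\gamma)\,d\gamma$. The construction gives $\|\widehat{g_i}\|_\infty \leq \eta' N$, and assumption~(3) combined with the pointwise boundedness of $f_i^\sharp$ gives $\|\widehat{g_i}\|_q \ll K N^{1-1/q}$; interpolation then yields $\|\widehat{g_i}\|_p \ll (\eta')^{1-q/p} K^{q/p} N^{1-1/p}$ for $p \geq q$. Since $3/q > 1$, one can pick exponents $p_1 > q$ and $p_2, p_3 \geq q$ with $1/p_1 + 1/p_2 + 1/p_3 = 1$, and H\"older bounds the cross term by $\ll (\eta')^{1-q/p_1} K^q N^2$, which is $O_{\ee, K, q}(\eta) N^2$. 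The main obstacle I anticipate is Step~(i): producing a dense model that is simultaneously pointwise bounded and long-AP-density-preserving requires a careful quantitative use of both the majorant condition~(2) and the restriction estimate~(3); by comparison, the dense Kneser step is a standard density argument, and the H\"older/restriction step is routine once the $q < 3$ slack has been noted.
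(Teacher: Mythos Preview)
Your three-step transference framework is exactly the paper's approach, and your sketches of Steps~(i) and~(iii) match the paper closely: the dense model is indeed built by convolving with a Bohr-set kernel whose frequencies come from the large spectrum of $f$ (this is the paper's Lemma~4.1), and the cross terms are indeed killed by H\"older using the slack $q<3$ together with the restriction estimate.

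However, you have mislocated the main difficulty. Step~(i) is in fact routine once one writes down $g(n)=\E_{b_1,b_2\in B}f(n+b_1-b_2)$; the pointwise bound, Fourier uniformity of $h=f-g$, and preservation of long-AP averages all fall out in a few lines. The substantive content lies in Step~(ii), which you describe as ``a standard density argument'' via ``Kneser--Pl\"unnecke''. This is not accurate. With $A_1,A_2\subset[N']$ of density $>1/3$, a direct Kneser bound gives only $|A_1+A_2|\ge |A_1|+|A_2|-1\approx \tfrac{2}{3}N'$, whereas the argument requires the popular sumset $S_\eta(A_1,A_2)$ to have size at least $\tfrac{4}{3}N'$ (so that together with $A_3$ of density $>1/3$ in $[n_0]$ one covers $n_0$). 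Bridging this factor-of-$2$ gap is precisely a statement about sets of doubling less than $4$, and the paper invokes the structural theorem of Eberhard--Green--Manners (their Theorem~4.1): if $|S_\delta(A,B)|\le 4\min(|A|,|B|)-\ee N$ then $A$ has density $>1/2$ on some long AP. The paper's Proposition~3.3 then turns this into the needed lower bound $|S_\eta(A,B)|\ge (4\alpha-\ee)N$ under the hypothesis of density $\ge\alpha$ on all long APs, via a regularising truncation so that the EGM conclusion (density $>1/2$ somewhere) contradicts the hypothesis. Pl\"unnecke gives upper bounds on iterated sumsets, not lower bounds, and Kneser alone is too weak by a factor of $2$; so your Step~(ii) as stated has a genuine gap, and you should expect the real work to be there rather than in the transference.
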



\begin{proof}[Proof of Theorem~\ref{thm:vin3} assuming Proposition~\ref{prop:kneser-sparse}]
Let $\rho,\rho^+,\alpha^-,\alpha^+$ be as in the statement of Theorem~\ref{thm:vin3}. Let $n_0 \in [3x-x^{\theta}/3, 3x+x^{\theta}/3]$ be odd, and our goal is to represent $n_0$ as a sum of three primes in $[x-x^{\theta}/3,x+x^{\theta}]$. Let $W = \prod_{p \leq w}p$ where $w = 0.1\log\log x$, and choose $b_1,b_2,b_3\pmod W$ with $(b_i,W) = 1$ such that $b_1+b_2+b_3 \equiv n_0\pmod {W}$. Let $N = \lfloor 4x^{\theta}/(3W) \rfloor$ and $m = \lfloor (x-x^{\theta}/3)/W \rfloor$, and consider the functions $f_i,\nu_i$ defined on $[N]$ by
\[ f_i(n) = \frac{\log x}{\alpha^+} \cdot \frac{\varphi(W)}{W}\rho(W(m+n)+b_i), \ \ \nu_i(n) = \frac{\log x}{\alpha^+}\cdot \frac{\varphi(W)}{W}\rho^+(W(m+n)+b_i). \]
Then $f_i \leq \nu_i$ since $\rho \leq \rho^+$. To prove the Fourier uniformity of $\nu_i$ (condition (2)), observe that
\[ \sum_{1 \leq n \leq N} \nu_i(n) \e(n\gamma) = \e((-b_i/W-m)\gamma) \frac{\log x}{\alpha^+}  \cdot \frac{\varphi(W)}{W} \left(\sum_{\substack{x-x^{\theta}/3 \leq n \leq x+x^{\theta} \\ n \equiv b_i\pmod{W}}} \rho^+(n) \e(n\gamma/W) + O(1)\right) \]
and similarly
\[ \sum_{1 \leq n \leq N} \e(n\gamma) = \e((-b_i/W-m)\gamma) \sum_{\substack{x-x^{\theta}/3 \leq n \leq x+x^{\theta} \\ n \equiv b_i\pmod{W}}} \e(n\gamma/W) + O(1) \]
for any $\gamma \in \R$. Comparing the two equations above and using the assumption (1) about the Fourier transform of $\rho^+$, we obtain
\[ \left|\sum_{1 \leq n \leq N} \nu_i(n) \e(n\gamma) - \sum_{1 \leq n \leq N} \e(n\gamma)\right| \ll \frac{\eta}{\alpha^+} \cdot  \frac{x^{\theta}}{W}. \]
This verifies hypothesis (2) of Proposition~\ref{prop:kneser-sparse}.

On the other hand, for any arithmetic progression $P \subset [N]$ of length $\geq \eta N$, by assumption (2) we have the lower bound
\[ \sum_{n \in P} f_i(n) = \frac{\log x}{\alpha^+} \cdot \frac{\varphi(W)}{W} \sum_{n \in Q} \rho(n) \geq \frac{\alpha^-}{\alpha^+}|Q| = \frac{\alpha^-}{\alpha^+}|P|, \]
where $Q = \{W(m+n)+b: n \in P\}$ (noting that if $d$ is the common difference of $Q$ then $d/\varphi(d) = W/\varphi(W)$). Since $\alpha^+ < 3\alpha^-$, we may find $\ee > 0$ such that $\alpha^-/\alpha^+ > 1/3 + \ee$. This verifies hypothesis (1) of Proposition~\ref{prop:kneser-sparse}.

Finally, the hypothesis (3) (with $q = 5/2$ and $K = O(1)$) holds by the work of Green and Tao~\cite{GreenTaoRestr}\footnote{Precisely we need~\cite[Theorem 1.1]{GreenTaoRestr} with $F$ being the linear form $F(n) = an+b$ with its coefficients satisfying $|a|,|b| \leq N^2$ (rather than $|a|, |b| \leq N$ as stated there). This generalization follows with the same proof, since the coefficient bounds are only used to control certain divisor functions.}. 
Since $\eta$ is assumed to be small enough in terms of $\ee$, we may thus apply Proposition~\ref{prop:kneser-sparse} to find a representation of the form
\[ n = n_1 + n_2 + n_3 \]
for each $n \in [N/2, N]$, with each $n_i$ in the support of $f_i$. In particular, each $p_i = W(m+n_i)+b_i$ is a prime in the range $[x-x^{\theta}/3, x+x^{\theta}]$, and we have the representation
\[ W(3m+n) + b_1+b_2+b_3 = p_1+p_2+p_3. \]
Setting $n = (n_0 - b_1-b_2-b_3)/W - 3m$ above concludes the proof of Theorem~\ref{thm:vin3}.
\end{proof}

Via a standard transference argument~\cite{Green3AP}, Proposition~\ref{prop:kneser-sparse} is reduced to the dense case when $\nu = 1$ and thus the functions $f_i$ are bounded by $1$. We record this special case of Proposition~\ref{prop:kneser-sparse} separately:


\begin{proposition}\label{prop:kneser-dense}
For any $\ee > 0$, there exist constants $\eta = \eta(\ee) > 0$ and $c = c(\ee) > 0$ such that the following statement holds. Let $N$ be a positive integer and let $f_1,f_2,f_3: [N] \to [0,1]$ be functions, with each $f \in \{f_1,f_2,f_3\}$ satisfying the inequality
\[ \E_{n \in P} f(n) \geq 1/3 + \ee \]
for each arithmetic progression $P \subset [N]$ with $|P| \geq \eta N$. Then for each $n \in [N/2, N]$ we have
\[ f_1*f_2*f_3(n) \geq c N^2. \]
\end{proposition}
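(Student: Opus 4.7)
The plan is to prove Proposition~\ref{prop:kneser-dense} by a Fourier-analytic transference argument of Roth type: decompose each $f_i$ as a Bohr-set-smoothed structured piece $g_i$ plus a Fourier-small remainder, then bound the triple convolution of the $g_i$'s directly from a pointwise lower bound inherited from hypothesis~(1).

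I would embed $[N]$ into $G = \mathbb{Z}/N'\mathbb{Z}$ with $N' = 3N+1$, extending each $f_i$ by zero to $G$. Fix a small threshold $\delta = \delta(\varepsilon) > 0$ and form the large spectra $\Lambda_i = \{\alpha \in \widehat{G} : |\widehat{f}_i(\alpha)| \geq \delta N\}$; Parseval together with $f_i \leq 1$ gives $|\Lambda_i| \ll \delta^{-2}$. Set $\Lambda = \Lambda_1 \cup \Lambda_2 \cup \Lambda_3$, choose a regular Bohr set $B = B(\Lambda;\rho)$ of radius $\rho = \delta^3$, and set $g_i = f_i * \mu_B$ with $\mu_B = 1_B/|B|$. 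Splitting the frequency sum in
\[ f_1*f_2*f_3(n) - g_1*g_2*g_3(n) = \frac{1}{N'}\sum_\alpha e(n\alpha/N') \widehat{f}_1(\alpha)\widehat{f}_2(\alpha)\widehat{f}_3(\alpha)\bigl(1 - \widehat{\mu}_B(\alpha)^3\bigr) \]
into $\alpha \in \Lambda$ (controlled via $|1 - \widehat{\mu}_B(\alpha)^3| \ll \rho$ together with $|\Lambda| \ll \delta^{-2}$ and the trivial bound $|\widehat{f}_i(\alpha)| \leq N$) and $\alpha \notin \Lambda$ (controlled via at least one factor $|\widehat{f}_i(\alpha)| \leq \delta N$ and Cauchy--Schwarz together with $\|\widehat{f}_j\|_2 \ll N$) leads to a uniform bound $|f_1*f_2*f_3(n) - g_1*g_2*g_3(n)| \ll \delta N^2$.

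The crucial step is to establish the pointwise bound $g_i(n) \geq 1/3 + \varepsilon/2$ for $n$ in the bulk of $[N]$ (away from an $O(\rho N)$ boundary). Since $g_i(n) = \mathbb{E}_{b \in B} f_i(n-b)$, this amounts to a lower bound on the $f_i$-average over each translate of $B$ contained in $[N]$. Using a standard Bohr-set structure lemma, $B$ (of dimension $d = |\Lambda| \ll \delta^{-2}$ and radius $\rho$) decomposes, up to a negligible boundary, into arithmetic progressions each of length $\gtrsim \rho N / d$. Choosing $\eta = \eta(\varepsilon)$ small enough that $\eta \leq \rho/d$, hypothesis~(1) applies on each such AP, and combining the averages yields $g_i(n) \geq 1/3 + \varepsilon - O(\rho) \geq 1/3 + \varepsilon/2$. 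For $n \in [N/2,N]$, the number of representations $n = m_1 + m_2 + m_3$ with $m_i \in [N]$ is $\gg N^2$, so $g_1*g_2*g_3(n) \geq (1/3 + \varepsilon/2)^3 \cdot cN^2 \gg_\varepsilon N^2$. Combining with the Fourier error estimate and choosing $\delta$ small in terms of $\varepsilon$ finishes the proof.

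The main obstacle is the pointwise lower bound on $g_i$: the Bohr-set-into-APs decomposition must be uniform in the translate, and the parameters $\delta, \rho, \eta$ must be chosen compatibly so that each AP in the decomposition has length $\geq \eta N$, the Fourier error is absorbed by the main term, and $\eta$ depends only on $\varepsilon$. Since the Bohr dimension $|\Lambda| \sim \delta^{-2}$ grows as $\delta \to 0$, the resulting $\eta$ must be polynomially small in $\varepsilon$, making the parameter bookkeeping technically delicate but essentially routine.
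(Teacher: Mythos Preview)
Your argument has a genuine gap at the ``Bohr-set structure lemma'' step. You assert that a Bohr set $B = B(\Lambda,\rho)$ of dimension $d = |\Lambda|$ decomposes, up to negligible boundary, into arithmetic progressions each of length $\gtrsim \rho N/d$. This is false already on cardinality grounds: with your parameters $\rho = \delta^3$ and $d \asymp \delta^{-2}$, one has $|B| \asymp \rho^d N' = \exp(-c\,\delta^{-2}\log(1/\delta))\,N'$, which for small $\delta$ is far smaller than the claimed AP length $\rho N/d \asymp \delta^5 N$. So $B$ cannot contain even a single progression of that length, let alone be tiled by them. More concretely, in $\mathbb{Z}/p^2\mathbb{Z}$ the rank-$2$ Bohr set $B(\{1,p\},\rho)$ is essentially the box $\{kp+r: |k|,|r| < \rho p\}$, whose longest AP has length $\asymp \rho p = \rho\sqrt{N'}$, not $\asymp_\rho N'$. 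In general a Bohr set of rank $d$ behaves like a $d$-dimensional generalized AP, and one cannot partition it into genuine (rank-$1$) APs of length $\gg_{\rho,d} N$.

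Without this decomposition you cannot pass from the AP-average hypothesis to the pointwise bound $g_i(n) \geq 1/3 + \varepsilon/2$, and the final step $g_1*g_2*g_3(n) \geq (1/3+\varepsilon/2)^3 \cdot cN^2$ collapses. Indeed, your scheme nowhere uses the specific threshold $1/3$: if it worked, it would prove the conclusion for any positive density threshold $\beta$, which is false. For $\beta < 1/3$ one can take $f_i = 1_{A_i}$ with $A_i = \{n : \{n\alpha\} \in I_i\}$ for a badly approximable $\alpha$ and intervals $I_i$ of length $\beta$; these satisfy the long-AP hypothesis (for suitable $\eta$), yet $A_1+A_2+A_3$ misses a positive proportion of $[N/2,N]$.

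The paper's proof is entirely different and genuinely uses the threshold $1/3$. It passes to the essential supports $A_1,A_2 \subset [n_0/2]$ of $f_1,f_2$ and invokes an additive-combinatorial input (Proposition~\ref{prop:doubling4}, derived from the Eberhard--Green--Manners structure theorem for sets of doubling less than $4$) to show that the set $S_\eta(A_1,A_2)$ of $\eta$-popular sums has size at least $(4/3+\varepsilon)(n_0/2) \geq 2n_0/3$; combining this with $|A_3| \geq (1/3+\varepsilon/2)n_0$ forces many representations $n_0 = a+b$ with $a \in A_3$ and $b \in S_\eta(A_1,A_2)$. The Fourier/Bohr-set machinery you describe is used in the paper only for the \emph{sparse-to-dense} transference (Lemma~\ref{lem:transference}), where it yields the weaker conclusion that the smoothed functions $g_i$ inherit the AP-average hypothesis --- not a pointwise lower bound --- and one then appeals to Proposition~\ref{prop:kneser-dense} itself.
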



Taking $A_1,A_2$ to be the essential supports of $f_1,f_2$, respectively, up to $N/2$, we see that they have sizes larger than $(1/3) \cdot (N/2)$ by the assumption. In order for the conclusion to hold, we need $A_1+A_2$ to have size larger than $2N/3$. Thus an understanding about sets of doubling less than $4$ is required. For two subsets $A, B \subset \Z$ and $\eta > 0$, denote by $S_{\eta}(A,B)$ the set of $\eta$-popular sums in $A+B$:
\[ S_{\eta}(A,B) = \{n\in\Z: 1_A*1_B(n) \geq \eta\max(|A|,|B|)\}. \]


\begin{proposition}\label{prop:doubling4}
For any $\ee > 0$, there exists a constant $\eta = \eta(\ee) > 0$ such that the following statement holds. Let $N$ be a positive integer and let $\alpha \in [0,1/2]$. Let $A, B \subset [N]$ be two subsets with the properties that
\[ |A \cap P| \geq \alpha |P|, \ \ |B \cap P| \geq \alpha |P|, \]
for each arithmetic progression $P \subset [N]$ with $|P| \geq \eta N$. Then 
\[ |S_{\eta}(A, B)| \geq (4\alpha - \ee)N. \]
\end{proposition}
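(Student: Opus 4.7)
The plan is to prove Proposition~\ref{prop:doubling4} via a structural analysis of $A$ and $B$ using the AP density condition, combined with a Freiman-type structure theorem for sums of two sets (Lev--Smeliansky, Grynkiewicz). The bound on $|S_\eta(A,B)|$ will be obtained in two stages: first a lower bound on $|A+B|$, and then an argument that most of $A+B$ consists of popular sums.

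For the first stage, the trivial bound $|A+B| \geq |A|+|B|-1 \geq 2\alpha N - 1$ is insufficient, so I would invoke Freiman's theorem for sumsets of two sets: if $|A+B| \leq |A|+|B|+\min(|A|,|B|)-3$, then $A$ and $B$ are contained in APs $P_A$ and $P_B$ with the same common difference $d$, of sizes at most $|A+B|-|B|+1$ and $|A+B|-|A|+1$ respectively. The AP density condition then has two crucial implications: (a) since $A$ has density at least $\alpha$ in every residue class modulo $d$ for $d \leq 1/\eta$, we must have $d = 1$, so $P_A$ and $P_B$ are intervals; and (b) since $A$ has density at least $\alpha$ in every long interval, the containing interval $P_A$ must span all but an $O(\eta)N$-portion of $[N]$, since otherwise the complement would contain a long interval on which $A$ has density zero. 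This forces $|A|, |B| \geq (1-O(\eta))N$ and hence $|A+B| \geq 2N - O(\eta N) \geq (4\alpha - \ee/2)N$ for $\alpha \leq 1/2$. When the Freiman hypothesis fails, we have $|A+B| > |A|+|B|+\min(|A|,|B|)-3 \geq 3\alpha N$, and a further sub-case analysis (e.g., if $\min(|A|,|B|) \geq 2\alpha N$, the trivial bound already gives $|A+B| \geq 4\alpha N - 1$) should close the remaining gap.

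For the second stage, the AP density condition implies that the representation function $h(n) = 1_A * 1_B(n)$ has no deep dips in the bulk of $A+B$. By decomposing $A$ and $B$ into residue classes modulo a small $d$ (with $|A_r|, |B_s| \geq \alpha N/d$ in each class by the AP hypothesis), one obtains a counting lower bound $h(n) \geq c_d \alpha^2 N$ for all $n$ in $A+B$ outside an $O(\eta N)$-boundary, so that $n \in S_\eta(A,B)$ provided $\eta$ is sufficiently small in terms of $\alpha$. This yields $|S_\eta(A,B)| \geq |A+B| - O(\eta N) \geq (4\alpha - \ee)N$, completing the proof.

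The main obstacle is the quantitative bound on $|A+B|$ in the intermediate regime $\alpha \in (1/3, 1/2]$, where the direct Freiman + density argument above gives only $|A+B| \geq (1+\alpha-O(\eta))N$, which falls short of the target $4\alpha N$ for $\alpha > 1/3$. Bridging this gap likely requires a finer analysis, perhaps by iterating the structural theorem on the intervals $P_A, P_B$ produced in the first Freiman step, or by invoking Plünnecke--Ruzsa-type inequalities to amplify the sumset lower bound.
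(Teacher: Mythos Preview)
Your proposal has a genuine gap, which you yourself identify in the final paragraph. The Freiman/Lev--Smeliansky $3k-4$ theorem only yields structure when $|A+B| < |A|+|B|+\min(|A|,|B|)$, i.e.\ when the doubling is essentially below $3$. But Proposition~\ref{prop:doubling4} is a statement about doubling below $4$: the critical regime is $\alpha$ near $1/2$, where $|A|,|B|$ may each be as small as $\alpha N$, yet one must show $|S_\eta(A,B)|$ is essentially $2N$. In the structured case your argument actually only gives $|A+B| \geq (1+\alpha-O(\eta))N$ (combine $|P_A| \leq |A+B|-|B|+1$ with $|P_A| \geq (1-O(\eta))N$; the intermediate step ``this forces $|A|,|B| \geq (1-O(\eta))N$'' is not justified), matching your stated obstacle for $\alpha > 1/3$. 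No elementary iteration of the $3k-4$ theorem or Pl\"unnecke--Ruzsa inequality is known to push the structure threshold from doubling $3$ to doubling $4$; this is precisely what the Eberhard--Green--Manners theorem accomplishes, and its proof goes through the arithmetic regularity lemma.

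The paper's argument is organized quite differently and avoids your two-stage split entirely. It invokes the EGM result in the contrapositive form: if $|S_\delta(A,B)| \leq 4\min(|A|,|B|) - \ee N$, then $A$ has density exceeding $1/2$ on some progression of length $\geq \delta N$. To play the hypothesis (density $\geq \alpha$ on long progressions) against this conclusion (density $>1/2$ on some long progression), the paper \emph{trims} $A,B$ down to subsets $A',B'$ having density exactly $\alpha + O(1/N)$ on each cell of a fixed partition of $[N]$ into short progressions of a single highly-composite step $q$. Then $A'$ has density at most $\alpha + O(\ee) \leq 1/2$ on every progression of length $\geq \delta N$, contradicting the EGM conclusion; hence $|S_\delta(A',B')| \geq (4\alpha-\ee)N$, and since $A'\subset A$, $B'\subset B$ this transfers to $S_\eta(A,B)$. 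In particular there is no separate passage from $A+B$ to $S_\eta(A,B)$: the EGM theorem is already phrased for popular sums, and your second-stage claim (a pointwise lower bound on $1_A*1_B(n)$ from residue-class density alone) is not substantiated.
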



In the contrapositive, the statement roughly asserts that if a set has doubling less than $4$, then its density on a not-so-short progression is less than expected. We will deduce this from a related result of Eberhard, Green, and Manners~\cite{EGM14}, which says that if a set has doubling less than $4$, then its density on a not-so-short progression is larger than $1/2$.


\section{The transference argument}\label{sec:transfer}


In this section we prove Proposition~\ref{prop:kneser-sparse}: In Subsection~\ref{ssec:Pfdoubling4} we prove Proposition~\ref{prop:doubling4}, in Subsection~\ref{ssec:Kneser-densefromdoubling4} we deduce Proposition \ref{prop:kneser-dense} from Proposition~\ref{prop:doubling4} and in Subsection~\ref{ssec:Kneser-sparsefromKneser-dense} we deduce Proposition~\ref{prop:kneser-sparse} from Proposition~\ref{prop:kneser-dense}.

\subsection{Proof of Proposition~\ref{prop:doubling4}}
\label{ssec:Pfdoubling4}

We start by deducing Proposition~\ref{prop:doubling4} from the following:


\begin{theorem}\label{thm:doubling4}
For any $\ee > 0$, there exists a constant $\delta = \delta(\ee) > 0$ such that the following statement holds. Let $N$ be a positive integer, and let $A,B \subset [N]$ be two subsets with 
\[ |S_{\delta}(A,B)| \leq 4\min(|A|,|B|) - \ee N. \] 
Then there is an arithmetic progression $P \subset [N]$ of length $\geq \delta N$ on which $A$ has density at least $1/2 + \ee/5$.
\end{theorem}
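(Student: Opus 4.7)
The plan is to deduce Theorem~\ref{thm:doubling4} from the structural theorem of Eberhard, Green and Manners~\cite{EGM14} that the authors announce they will invoke: informally, a set with additive doubling strictly less than $4$ must attain density exceeding $1/2$ on some arithmetic progression of length at least a constant proportion of $N$, with quantitative parameters controlled by the doubling gap. The task is therefore to bridge the hypothesis about the \emph{popular} sumset $S_\delta(A,B)$ and a genuine doubling condition for a single set, to which EGM can be applied.

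The first step is to convert the popular-sumset bound into a true sumset bound on refined subsets $A' \subset A$, $B' \subset B$. The basic observation is that the mass of $\1_A \ast \1_B$ supported outside $S_\delta(A,B)$ is at most $\delta \max(|A|,|B|) \cdot |A+B| \le 2\delta \max(|A|,|B|) N$, so only a small fraction of pairs $(a,b)$ produce non-popular sums. A Markov-type pruning (or, if finer, a Balog--Szemer\'edi--Gowers argument) yields subsets $A', B'$ of sizes $(1-O(\sqrt{\delta}))|A|,(1-O(\sqrt\delta))|B|$ with $|A'+B'| \le 4\min(|A|,|B|) - \varepsilon N + O(\sqrt{\delta}\, N)$. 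Assume WLOG $|A|\le|B|$; note that the hypothesis tacitly forces $|A|/N \ge \varepsilon/4$, so $A$ and $B$ have comparable positive density. Applying the Ruzsa inequality $|A'+A'|\cdot|B'| \le |A'+B'|^2$ (or a suitable Pl\"unnecke estimate) then transfers the bound to $|A'+A'| \le (4-\kappa)|A'|$ for some $\kappa = \kappa(\varepsilon) > 0$.

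Applying the EGM structural theorem to $A'$ then produces an arithmetic progression $P \subset [N]$ of length at least $\delta N$ on which the density of $A'$ is at least $1/2 + c$ for some $c = c(\kappa) > 0$. Choosing $\delta$ small enough relative to $\varepsilon$ absorbs the $O(\sqrt\delta)$ pruning loss so that $A$ itself has density at least $1/2 + \varepsilon/5$ on $P$. The main obstacle is the quantitative bookkeeping in the second step: ensuring that the strict improvement $\kappa > 0$ over doubling $4$ survives uniformly across admissible parameter regimes --- in particular when $|A|/N$ is near the threshold $\varepsilon/4$, or when the ratio $|B|/|A|$ is large and Ruzsa's inequality becomes lossy. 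These edge cases may require a bespoke argument (for instance, treating the near-threshold regime directly or using a more refined Pl\"unnecke-type bound), whereas the Eberhard--Green--Manners theorem itself enters only as a black box.
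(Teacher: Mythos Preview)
The paper does not prove Theorem~\ref{thm:doubling4} at all; it simply cites it as an asymmetric version of \cite[Theorem~4.1]{EGM14}, pointing to the remarks following \cite[Theorem~6.2]{EGM14}. So your task reduces to checking whether your reduction to the \emph{symmetric} EGM theorem actually goes through.

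It does not, and the gap is in your Step~2. From $|A'+B'|\le (4-\kappa)\min(|A'|,|B'|)$ you invoke the Ruzsa triangle inequality $|A'+A'|\,|B'|\le |A'+B'|^2$ to conclude $|A'+A'|\le (4-\kappa')|A'|$. But this inequality gives
\[
|A'+A'|\ \le\ \frac{|A'+B'|^2}{|B'|}\ \le\ \frac{16\,|A'|^2}{|B'|},
\]
which is below $4|A'|$ only when $|B'|\ge 4|A'|$. In the central case $|A|\approx|B|$ (which is perfectly consistent with the hypothesis) you get doubling at most $16$, far outside the range where the symmetric EGM theorem says anything. No Pl\"unnecke-type inequality repairs this: passing from control on $A+B$ to control on $A+A$ incurs an unavoidable quadratic loss in the doubling constant, so you cannot land below the threshold $4$. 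You have also misdiagnosed the lossy regime --- Ruzsa is harmless when $|B|/|A|$ is large and fatal when it is close to $1$, the opposite of what you wrote.

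Separately, Step~1 is more delicate than a Markov pruning: knowing that a $1-O_\varepsilon(\delta)$ fraction of pairs $(a,b)$ land in $S_\delta(A,B)$ does not by itself yield large $A',B'$ with $A'+B'\subset S_\delta(A,B)$; one needs a $99\%$-BSG type refinement, and the resulting losses would again have to be tracked against the tight constant~$4$. But even granting Step~1, Step~2 already breaks the argument. The asymmetric statement genuinely requires going into the EGM machinery (arithmetic regularity and Freiman-type structure for $A+B$) rather than reducing to the symmetric case as a black box, which is why the paper defers to the discussion in~\cite{EGM14}.
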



This is an asymmetric version of~\cite[Theorem 4.1]{EGM14}; see the remarks following~\cite[Theorem 6.2]{EGM14}. To deduce Proposition~\ref{prop:doubling4}, we may clearly assume that $\ee/4 \leq \alpha \leq 1/2 - 3\ee$. We may also assume that $N$ is sufficiently large depending on $\ee$, since otherwise the assumptions imply that $A = B = [N]$ and the statement is trivial. 

Let $\delta = \delta(\ee/2) > 0$ be the constant from Theorem~\ref{thm:doubling4}. Let $q$ be the product of all positive integers up to $\delta^{-1}$. We shall show that Proposition~\ref{prop:doubling4} holds with $\eta = \ee\delta/(2q)$.

Divide $[N]$ into short intervals of length $\sim \ee\delta N$, and then divide each of these short intervals into arithmetic progressions modulo $q$. In this way we obtain a partition
\[ [N] = Q_1 \cup \cdots \cup Q_m, \]
where each $Q_i$ is an arithmetic progression of step $q$ and length $\sim \ee \delta N/q$. By assumption we have
\[ |A \cap Q_i| \geq \alpha |Q_i|, \ \ |B \cap Q_i| \geq \alpha |Q_i|, \]
for each $Q_i$. By removing elements from $A,B$ suitably, we may find $A' \subset A$ and $B' \subset B$ with the properties that
\[ |A' \cap Q_i| = \alpha|Q_i| + O(1), \ \ |B' \cap Q_i| = \alpha|Q_i| + O(1), \]
for each $Q_i$. We claim that $|S_{\delta}(A',B')| \geq (4\alpha-\ee)N$, and thus
\[ |S_{\eta}(A,B)| \geq |S_{\delta}(A,B)| \geq |S_{\delta}(A',B')| \geq (4\alpha - \ee)N, \]
as desired. To prove the claim, suppose for the purpose of contradiction that it fails.  Then by Theorem~\ref{thm:doubling4}, there exists an arithmetic progression $P \subset [N]$ of length $\geq \delta N$ on which $A'$ has density larger than $1/2$. A moment's thought reveals that $P$ is necessarily the union of some of the $Q_i$'s, up to a residual set of at most $2\ee\delta N$ elements. It follows by the construction of $A'$ that
\[ |A' \cap P| \leq \alpha |P| + O(m) + 2\ee\delta N \leq (\alpha + 3\ee) |P| \leq |P|/2, \]
a contradiction. This completes the proof of Proposition~\ref{prop:doubling4}.


\begin{remark}
Assuming that $A,B$ have densities at least $\alpha,\beta$, respectively, on not-so-short progressions, one should be able to prove using the arithmetic regularity lemma that
\[ |A+B| \geq (\min(\alpha+\beta,1) - \ee)2N. \]
However, we do not see how to directly deduce this asymmetric version from Theorem~\ref{thm:doubling4}.
\end{remark}

\subsection{Deduction of Proposition~\ref{prop:kneser-dense} from Proposition~\ref{prop:doubling4}}
\label{ssec:Kneser-densefromdoubling4}

We may assume that $N$ is large enough in terms of $\ee$, since otherwise the statement is obvious. Fix a positive integer $n_0 \in [N/2, N]$, and let $N' = \lfloor n_0/2 \rfloor$. Let $A_1, A_2 \subset [N']$ be the essential supports of $f_1,f_2$, respectively:
\[ A_i = \{n \in [N']: f_i(n) \geq \ee/2\}. \]
Let $\eta = \eta(\ee)$ be the constant from Proposition~\ref{prop:doubling4}. For each arithmetic progression $P \subset [N']$ of length $\geq \eta N$, since $f_i$ has average at least $1/3+\ee$ on $P$, it follows that
\[ |A_i \cap P| \geq (1/3 + \ee/2) |P|. \]
By Proposition~\ref{prop:doubling4} applied to $A_1,A_2$ with $\alpha = 1/3 + \ee/2$, we conclude that
\[ |S_{\eta}(A_1,A_2)| \geq (4/3+\ee) N' \geq 2n_0/3. \]
Thus $S_{\eta}(A_1,A_2)$ is a subset of $[n_0]$ with density at least $2/3$. On the other hand, the set
\[ A_3 = \{n \in [n_0]: f_3(n) \geq \ee/2\} \]
has density at least $1/3+\ee/2$ by assumption. To lower bound $f_1*f_2*f_3(n_0)$, note that the number of ways to write $n_0 = a+b$ with $a \in A_3$ and $b \in S_{\eta}(A_1,A_2)$ is at least
\[ |A_3| - (n_0 - |S_{\eta}(A_1,A_2)|) \geq \ee n_0/2 \gg \ee N. \]
Since $f_1*f_2(b) \gg \ee^2\eta \max(|A_1|, |A_2|) \gg \ee^2\eta N$ whenever $b \in S_{\eta}(A_1,A_2)$ and $f_3(a) \gg \ee$ whenever $a \in A_3$, it follows that
\[ f_1*f_2*f_3(n_0) \gg \ee^4 \eta N^2, \]
as desired. This finishes the proof of Proposition~\ref{prop:kneser-dense}.

\subsection{Deduction of Proposition~\ref{prop:kneser-sparse} from Proposition~\ref{prop:kneser-dense}}
\label{ssec:Kneser-sparsefromKneser-dense}

It remains to deduce Proposition~\ref{prop:kneser-sparse} from the special case $\nu = 1$. While the main idea is standard, it is a bit trickier to work in $\Z$ instead of a finite group. We follow the arguments from~\cite{Shao}.  The heart of the matter is to decompose $f$ into a uniform part, which contributes little to the three-fold convolution, and a structured part, which is bounded by $1$ pointwise. 


\begin{lemma}\label{lem:transference}
Let $f \in \{f_1,f_2,f_3\}$ be as in the statement of Proposition~\ref{prop:kneser-sparse}. Let $\delta \in (0,1)$ be a parameter. There is a decomposition $f = g+h$ satisfying the following properties:
\begin{enumerate}
\item the function $g$ satisfies the pointwise bound $0 \leq g(n) \leq 1 + O_{\delta}(\eta)$;
\item the function $h$ is Fourier uniform in the sense that $\|\widehat{h}\|_{\infty} \leq \delta N$;
\item for each arithmetic progression $P \subset [N]$ with $|P| \geq (\eta+\delta/\ee) N$ we have $\E_{n \in P} g(n) \geq 1/3 + \ee/2$;
\item we have $\|\widehat{g}\|_q \leq K N^{1-1/q}$ and $\|\widehat{h}\|_q \leq K N^{1-1/q}$.
\end{enumerate}
\end{lemma}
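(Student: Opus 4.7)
The plan is to realize the decomposition as $g := f * \psi$ and $h := f - g$, where $\psi = \mu_B * \mu_{-B}$ is the nonnegative, $L^1$-normalized, Fourier-positive averaging kernel attached to a suitable Bohr neighborhood $B$. The hope is that $\widehat\psi$ is close to $1$ on the ``large spectrum'' $\Lambda := \{\gamma : |\widehat f(\gamma)| \geq \delta N/2\}$ and small off it, which makes $\widehat h = \widehat f(1-\widehat\psi)$ uniformly small; the majorization $f \leq \nu$ forces $g \leq \nu*\psi$ to be sup-norm close to $\1_{[N]} * \psi \leq 1$; and the smoothness of $\psi$ propagates the AP lower bound for $f$ to one for $g$ on sufficiently long APs. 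This is the Green transference scheme~\cite{Green3AP}, implemented in $\Z$ as in~\cite{Shao}.

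In detail, embed $[N]$ into $\Z/\tilde N\Z$ for a prime $\tilde N \asymp N$. By hypothesis (3) and Chebyshev, $\Lambda$ lies in a $\rho$-neighborhood of a set of size $K' = O_{\delta, K, q}(1)$. Pick a small absolute constant $c$ and a radius $\rho = \rho(\delta, K, q) > 0$, and put $M := c\delta N$ and $B := B(\Lambda, \rho) \cap [M]$; standard Bohr-set estimates yield $|B| \gg_\delta N$ and $|\widehat{\mu_B}(\gamma)-1| \leq 2\pi\rho$ for every $\gamma \in \Lambda$. Property (1) then follows from $g \leq \nu*\psi$ together with Fourier inversion: $\|\nu*\psi - \1_{[N]}*\psi\|_\infty \leq \|\widehat\nu - \widehat{\1_{[N]}}\|_\infty \cdot \|\widehat\psi\|_1 = \eta N \cdot |B|^{-1} = O_\delta(\eta)$, using Parseval to compute $\|\widehat\psi\|_1 = |B|^{-1}$, and $\1_{[N]}*\psi \leq 1$. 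Property (2) splits into the cases $\gamma \in \Lambda$ (where $|1 - \widehat\psi| = O(\rho)$ and $|\widehat f| \leq \widehat\nu(0) \leq (1+\eta)N$) and $\gamma \notin \Lambda$ (where $|\widehat f| < \delta N/2$ and $|1-\widehat\psi| \leq 1$), giving $\|\widehat h\|_\infty \leq \delta N$ once $\rho$ is small enough in $\delta$. Property (4) is immediate from $|\widehat g|, |\widehat h| \leq |\widehat f|$ because $\widehat\psi, 1 - \widehat\psi \in [0,1]$. For property (3), expand $\E_{n \in P} g(n) = \sum_b \psi(b) \E_{n \in P} f(n-b)$; for each $b \in \mathrm{supp}(\psi) \subset [-2M,2M]$, $(P-b)\cap[N]$ is a sub-AP of length $\geq |P| - 2M \geq \eta N$ (using $|P| \geq (\eta + \delta/\ee)N$ and $c$ small), so hypothesis (1) of the proposition applies to $f$ and gives $\E_{m \in (P-b)\cap[N]} f(m) \geq 1/3 + \ee$; a boundary error of order $M/|P| = O(c\ee)$ is absorbed to produce $\E_{n\in P} g(n) \geq 1/3 + \ee/2$.

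The main obstacle is parameter balancing. Keeping the implicit constant in property (1) dependent only on $\delta$ (and not on $\ee$) forces $|B| = \Omega_\delta(N)$ and hence the choice $M = c\delta N$ with $c$ absolute. The AP averaging in property (3) can then be preserved only for progressions of length $\gg M/\ee \sim \delta N/\ee$, which is exactly the threshold $(\eta + \delta/\ee)N$ appearing in the statement. Simultaneously the Bohr radius $\rho$ must be chosen small in $\delta$ so that $1 - \widehat\psi$ is $O(\delta)$ on $\Lambda$, while preserving the lower bound $|B| \geq \rho^{K'}M \gg_\delta N$; this uses that the rank $K' = O_{\delta, K, q}(1)$ is controlled via the $L^q$-restriction hypothesis (3), with no $\ee$-dependence leaking into $K'$.
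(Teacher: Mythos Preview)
Your proposal is correct and follows essentially the same route as the paper: both define $g(n)=\E_{b_1,b_2\in B}f(n+b_1-b_2)$ (equivalently $g=f*\psi$ with $\psi=\mu_B*\mu_{-B}$) for a Bohr set $B\subset[1,\delta N]$ built from the large spectrum of $f$, use the restriction estimate plus a covering argument to get $|B|\gg_\delta N$, and verify (1)--(4) by the same Fourier/averaging computations you describe. One minor remark: your mention of embedding into $\Z/\tilde N\Z$ is not actually used in the rest of your argument (which, like the paper, works directly in $\Z$), so you can drop it.
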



Assuming the existence of such a decomposition, we may conclude the proof of Proposition~\ref{prop:kneser-sparse} as follows. Fix a positive integer $n_0 \in [N/2,N]$. For each $i \in \{1,2,3\}$, let $f_i = g_i + h_i$ be the decomposition of $f_i$ from Lemma~\ref{lem:transference}, corresponding to a parameter $\delta > 0$ sufficiently small in terms of $\ee, K, q$. We may assume that $\eta$ is small enough in terms of $\ee, \delta$, since otherwise the conclusion can be made trivial. In particular, by property (1) we have the pointwise bound $g_i(n) \leq 1 + \ee/4$.

Decompose $f_1*f_2*f_3(n_0)$ into the sum of eight terms, one of them being the main term $g_1*g_2*g_3(n_0)$. By properties (1) and (3), we may apply Proposition~\ref{prop:kneser-dense} (after renormalizing so that each $g_i$ is $1$-bounded) to conclude that
\[ g_1*g_2*g_3(n_0) \geq c(\ee) N^2, \]
for some constant $c(\ee) > 0$ depending only on $\ee$. It remains to show that all the other seven terms are small. We only consider $h_1*h_2*h_3(n_0)$, as the other terms are treated similarly. We have
\[ |h_1*h_2*h_3(n_0)| \leq \int_0^1 |\widehat{h_1}(\gamma) \widehat{h_2}(\gamma) \widehat{h_3}(\gamma)| \dd\gamma \leq \|\widehat{h_1}\|_{\infty}^{3-q} \|\widehat{h_1}\|_q^{q-2} \|\widehat{h_2}\|_q \|\widehat{h_3}\|_q \leq \delta^{3-q} K^q N^2 \]
by H\"{o}lder's inequality and properties (2) and (4). This can be made smaller than $c(\ee)N^2/10$ by choosing $\delta$ small enough in terms of $\ee, K, q$. Thus the total contributions of the seven error terms are small compared to the main term and we have
\[ f_1*f_2*f_3(n_0) \geq \frac{1}{5} c(\ee) N^2 \]
as desired. It remains only to prove Lemma~\ref{lem:transference}.


\begin{proof}[Proof of Lemma~\ref{lem:transference}]
Let $T$ be the set of large frequencies of $f$:
\[ T = \{\gamma \in \R/\Z: |\widehat{f}(\gamma)| \geq \delta N\}, \]
and define a Bohr set using these frequencies:
\[ B=\{1\leq b\leq \delta N: \|b\gamma\|_{\R/\Z}<\delta/30 \text{ for all }\gamma\in T\}. \]
By the restriction estimate $\|\widehat{f}\|_q \leq K N^{1-1/q}$ and a standard covering argument, one easily deduces that $|B| \gg_{\delta} N$ (see~\cite[Lemma 3.2]{Shao}). Define the functions $g,h$ by
\[ g(n)=\E_{b_1,b_2\in B}f(n+b_1-b_2)\quad \text{and} \quad h(n)=f(n)-g(n). \]
We show that these functions satisfy the required properties. To prove the pointwise upper bound for $g$, note that
\begin{align*}
g(n) &\leq \E_{b_1,b_2\in B}\nu(n+b_1-b_2) = \E_{b_1,b_2\in B} \int_0^1 \widehat{\nu}(\gamma) \e((n+b_1-b_2)\gamma) \dd\gamma  \\
&= \int_0^1 \widehat{\nu}(\gamma) \e(n\gamma) \left|\E_{b\in B} \e(b\gamma)\right|^2 \dd\gamma.
\end{align*}
By the assumption on $\widehat{\nu}$, we may replace $\widehat{\nu}$ above by $\widehat{\1_{[N]}}$ at the cost of an error bounded by 
\[ \int_0^1 \left|\widehat{\nu}(\gamma) - \widehat{\1_{[N]}}(\gamma)\right| \left|\E_{b\in B} \e(b\gamma)\right|^2 \dd\gamma  \leq \eta N \int_0^1 \left|\E_{b\in B} \e(b\gamma)\right|^2 \dd\gamma \leq \frac{\eta N}{|B|} \ll_{\delta} \eta. \]
It follows that
\[ g(n) \leq \int_0^1 \widehat{\1_{[N]}}(\gamma) \e(n\gamma) \left|\E_{b\in B} \e(b\gamma)\right|^2 \dd\gamma + O_{\delta}(\eta) = \E_{b_1,b_2 \in B} \1_{[N]}(n+b_1-b_2) + O_{\delta}(\eta) \leq 1 + O_{\delta}(\eta), \]
where the middle equality follows by reversing the above process. This proves (1). 

To prove the Fourier uniformity of $h$,  note that
\begin{equation}\label{eq:transfer-h-hat} 
\widehat{h}(\gamma)=\widehat{f}(\gamma)\left(1-|\E_{b\in B}\e(b\gamma)|^2\right).
\end{equation}
If $\gamma\notin T$, then $|\widehat{h}(\gamma)|\leq
|\widehat{f}(\gamma)|\leq \delta N$ by the definition of $T$. If $\gamma\in T$, then
\[ 1-|\E_{b\in B}\e(b\gamma)|^2\leq 2(1-|\E_{b\in B}\e(b\gamma)|)\leq 2\E_{b\in B}|1-\e(b\gamma)|\leq \delta/2 \]
by the definition of $B$. Thus in either case we have $|\widehat{h}(\gamma)|\leq \delta N$, proving (2).

Property (3) inherits from the similar property satisfied by $f$, since $B \subset [1,\delta N]$. More precisely, for any arithmetic progression $P \subset [N]$ with $|P| \geq (\eta + \delta/\ee) N$ we have
\[ \sum_{n \in P} g(n) = \E_{b_1,b_2 \in B} \sum_{n \in P} f(n+b_1-b_2) = \E_{b_1,b_2 \in B} \sum_{n \in P+b_1-b_2} f(n). \]
Since $P+b_1-b_2$ is again an arithmetic progression, whose intersection with $[N]$ has size at least $|P| - \delta N \geq \eta N$, each inner sum over $n$ above is at least
\[ (1/3+\ee)(|P| - \delta N) = |P| \left(1/3+\ee - \frac{\delta N}{2|P|}\right) \geq |P|(1/3 + \ee/2). \] 
This proves property (3). The final property (4) follows immediately from~\eqref{eq:transfer-h-hat}.
\end{proof}


\section{The sieve majorant}\label{sec:rho+}


In this section we establish the condition $(1)$ of Theorem~\ref{thm:vin3} with $\theta = 11/20 + 2\varepsilon$ by constructing the required sieve majorant $\rho^+$. The argument is continuous and with slight modifications would work also for $\theta = 11/20 - \varepsilon$. Note that if one takes $\rho^+$ to be a standard upper bound sieve (such as Selberg's sieve), then $\alpha^+$ is at least $2/\theta$ since the short interval has exponent of distribution $\theta$, which is not enough for our purposes. One may attempt to rectify this issue by using a well-factorable error term from the linear sieve so that the short interval has a much larger level of bilinear distribution, but we are unable to control this error term when twisted by an exponential phase.

Instead, following the arguments in~\cite{Baker-Harman-4/7}, we define $\rho^+$ based on Harman's sieve. Let $\rho(n,y) = 1$ if $(n,P(y)) = 1$ and $\rho(n,y) = 0$ otherwise, and take
\begin{equation}\label{eq:rho+1} 
\rho^+(n) = \rho(n, x^{1/4}) + \sum_{\substack{n = p_1p_2p_3m \\ z< p_1 < p_2 < p_3 < x^{1/4}}} \rho(m, p_1),\qquad z=x^{1/10}.
\end{equation} 
This is a simpler version of the majorant constructed in~\cite{Baker-Harman-4/7}, but it is already enough for our purposes (and for theirs as well). The Fourier transform of $\rho^+$ can be understood via Proposition~\ref{prop:SAz} below.

In what follows $\ee > 0$ and $A \geq 2$ are always fixed, assumed to be sufficiently small and sufficiently large, respectively. In particular when we have conditions like $d \leq (\log x)^{O(1)}$ or $|a_m| \leq \tau(m)^{O(1)}$, we assume that $A$ is large compared to the implied constants. We will also repeatedly use $B$ for an arbitrarily large constant. Moreover, $x$ is assumed to be sufficiently large in terms of $\ee$, $A$ and $B$, and $o(1)$ denotes a quantity that tends to $0$ as $x\to\infty$. To facilitate proofs, we will use smoothed sums, and compare sums over the short interval $[x,x+x^{\theta}]$ to their counterparts over the long interval $[x,x+h_1]$, where
\[ h_1 = x \exp(-\log^{1/2}x). \]


\begin{proposition}
\label{prop:SAz}
Let $\theta > 11/20$, let $g$ be a smooth function supported on $[x,x+x^\theta]$ satisfying $|g^{(j)}(t)| \ll_j x^{-j\theta}$ for all $j\ge 0$, and let $g_1$ be a smooth function supported on $[x,x+h_1]$ with $h_1=x\exp(-\log^{1/2}{x})$ satisfying $|g_1^{(j)}(t)|\ll_j h_1^{-j}$ for all $j \geq 0$. Let $\gamma=a/q+\lambda$ for some $(a,q)=1$ with $q\le Q=x^{2\theta-1}\log^{-5A}{x}$ and $|\lambda|<1/q Q$ and let $(c,d)=1$ with $d\ll \log^{O(1)}{x}$. Then we have
\[
\sum_{\substack{n\in [x,x+x^\theta] \\ n\equiv c\pmod{d} }}\rho^+(n)g(n)\e(n\gamma)=\frac{[d,q]}{\varphi([d,q])}\Bigl(\frac{1}{\widehat{g_1}(0)}\sum_{n\in [x,x+h_1]}g_1(n)\rho^+(n)\Bigr)\Bigl(\sum_{\substack{n\in [x,x+x^\theta]\\ n\equiv c\pmod{d} \\ (n,q)=1}}g(n)\e(\gamma n)\Bigr)+O\Bigl(\frac{x^\theta}{\log^{A}{x}}\Bigr).
\]
Moreover, if $q\ge \log^{5A}{x}$ then both sides of this equation are $O(x^\theta/\log^A{x})$.
\end{proposition}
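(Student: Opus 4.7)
The proof plan follows the general strategy of Baker--Harman adapted for twisted and progression-restricted sums. The first step is to reduce the left-hand side to character sums: write $\gamma = a/q + \lambda$ and use the orthogonality relations for Dirichlet characters to decouple both the phase $\e(an/q)$ (on the coprime residues mod $q$) and the condition $n \equiv c\pmod{d}$. After this reduction, the task becomes that of estimating twisted sums of the shape $\sum_n \rho^+(n) \chi(n) g(n) \e(\lambda n)$ for characters $\chi$ modulo $[d,q]$, showing that only the trivial character with $\lambda \to 0$ produces a main term while all other contributions are $O(x^{\theta}/\log^{A+O(1)}x)$. The factor $[d,q]/\varphi([d,q])$ on the right-hand side arises naturally from the fact that only those characters whose conductor divides $q$ contribute, while characters with non-trivial part modulo $d$ get killed by the coprimality condition.

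Next I would decompose $\rho^+$ into a bounded number of multilinear pieces. The second term in the definition of $\rho^+$ is already a four-variable sum with $n = p_1p_2p_3 m$, $p_1p_2p_3 \in (x^{3/10}, x^{3/4})$ and $m \in (x^{1/4}, x^{7/10})$, which has the bilinear shape needed for a Type II estimate. For $\rho(n,x^{1/4})$ I would apply Buchstab's identity iteratively, breaking it into Type I convolutions (one smooth factor of large size) and Type II bilinear pieces in acceptable ranges. Each piece is then handled by Perron's formula: one expresses the twisted short-interval sum as a contour integral of products of Dirichlet polynomials $\sum_m a_m \chi(m) m^{-s}$ and $\sum_n b_n \chi(n) n^{-s-i\lambda}$ over a vertical segment, and bounds the integral using zero-density estimates of Huxley--Jutila strength for $L(s,\chi)$ together with fourth-power moment and large-sieve estimates for Dirichlet polynomials. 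The threshold $\theta > 11/20$ is exactly what is needed so that the bilinear ranges $p_1p_2p_3 \in (x^{3/10}, x^{3/4})$ lie within the regime where such mean-value bounds beat the trivial one, as in the work of Baker--Harman--Pintz.

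The comparison between the short-interval and long-interval densities (the factor $\frac{1}{\widehat{g_1}(0)}\sum_n g_1(n) \rho^+(n)$) emerges from the principal character, $\lambda=0$ contribution. Here one does not need any savings beyond what is given by the above Dirichlet polynomial machinery: both sides are essentially equal to this average density times $\sum_n g(n) \e(\lambda n) \mathbf{1}_{n\equiv c\pmod{d}, (n,q)=1}$, after some standard manipulations exchanging the order of summation. For the minor-arc case $q \geq \log^{5A} x$ the right-hand side is small because $\sum_{(n,q)=1} g(n)\e(an/q) \ll x^{\theta}/q$ by a Ramanujan-sum calculation (the $[d,q]/\varphi([d,q])$ and density factors only contribute $O(\log\log x/\log x)$), while the left-hand side is small because the trivial character bound already gives cancellation whenever $\chi$ is a character modulo $q$ of size $\geq \log^{5A} x$.

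The main obstacle is carrying the Dirichlet polynomial machinery through the character and exponential twist at $\theta$ this close to $1/2$. The Baker--Harman type II estimates already saturate the exponent $11/20$ in the untwisted, short-interval setting, and one must verify that twisting by $\chi \bmod q$ with $q \leq x^{2\theta-1}\log^{-5A}x$ and by $\e(\lambda n)$ with $|\lambda| \leq 1/qQ$ does not degrade the estimates: effectively, the character twist raises the effective modulus into the range where a hybrid large-sieve/zero-density argument is still applicable precisely because $qQ \leq x^{2\theta-1}$. Ensuring that every Buchstab piece admits a factorization into pieces whose Dirichlet polynomials can be bounded by such a hybrid estimate (and tracking the residue class condition modulo $d$ through the calculation) will be the most delicate part of the argument.
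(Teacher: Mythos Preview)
Your overall strategy --- character reduction, Buchstab decomposition, and Dirichlet polynomial bounds via Mellin/Perron --- is correct for the major arcs $q\le\log^{5A}x$, and is essentially what the paper does there. But there is a genuine gap in your treatment of the range $\log^{5A}x\le q\le Q=x^{2\theta-1}\log^{-5A}x$.

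For such $q$ (which can be a positive power of $x$, roughly up to $x^{1/10}$), your plan is to expand in characters modulo $[d,q]$ and bound each of the $\varphi([d,q])\asymp q$ twisted sums. You write that ``the trivial character bound already gives cancellation,'' but it is not clear what estimate this refers to, and saving a factor $q$ per character when $q\sim x^{1/10}$ is not something zero-density or hybrid large-sieve bounds will deliver in this bilinear setting. The paper handles this regime by a completely different mechanism: it bounds the Type~I and Type~II exponential sums $\sum a_m b_n\,g(mn)\e(mn\gamma)$ \emph{directly} by Weyl--Vinogradov type estimates, using the Diophantine information $\|q\gamma\|<q^{-1}$ with $\log^{5A}x\le q\le Q$. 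No characters appear in this part of the argument. Your sketch provides no substitute for this minor-arc exponential-sum step.

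Secondly, your claim that the second term in $\rho^+$ ``has the bilinear shape needed for a Type~II estimate'' with $p_1p_2p_3\in(x^{3/10},x^{3/4})$ is not correct. At $\theta=11/20+\varepsilon$ the admissible Type~II range is essentially $[x^{9/20},x^{11/20}]$, which $(x^{3/10},x^{3/4})$ does not respect. The paper first combines both terms of $\rho^+$ into $\sum_{mh=n}a_m\rho(h,z)$ with $z=x^{1/10}$ and then iterates Buchstab, arriving at Type~II pieces of a quite specific balanced shape: two generic factors $m\sim x^{\beta_1}$, $l\sim x^{\beta_2}$ with $|\beta_1-\beta_2|<1/10+\varepsilon$ and $\beta_1+\beta_2>9/10-\varepsilon$, together with an explicit short prime factor. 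The threshold $11/20$ then comes not from Huxley--Jutila zero-density estimates, but from mean-value and large-value bounds applied to the two generic polynomials combined with the zero-\emph{free} region applied to the prime-supported polynomial (cf.\ Harman's book, Lemma~7.3). Your decomposition would not place the pieces in a range where these estimates bite.
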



It is now simple to establish condition $(1)$ of Theorem \ref{thm:vin3} from Proposition \ref{prop:SAz}. To simplify the notation we establish this condition for the interval $[x, x+x^\theta]$ but doing it for $[x-x^\theta/3, x+x^\theta]$ would make no difference. 

By the expression~\eqref{eq:rho+1} and the prime number theorem (compare with~\cite[Section 1.4]{Harman-book}),
\[
\frac{1}{\widehat{g}_1(0)} \sum_{x \leq n \leq x + h_1} g_1(n) \rho^+(n) = \frac{1+o(1)}{\log x}\left(4\omega(4) + \int_{1/10}^{1/4} \int_{\beta_1}^{1/4} \int_{\beta_2}^{1/4} \frac{\omega(\frac{1-\beta_1-\beta_2-\beta_3}{\beta_1})}{\beta_1^2\beta_2\beta_3} \dd\beta_1 \dd\beta_2 \dd\beta_3\right),
\]
where $\omega$ is the Buchstab function. Numerical calculation\footnote{A Mathematica\textregistered\, file performing this computation is included along with this document on \url{www.arxiv.org}.} using the fact that $\omega(u) \leq \omega(3)$ for all $u \geq 3$ (see~\cite[Equation (1.4.16)]{Harman-book}) shows that, writing $\alpha^+$ for the expression in parentheses, we have $\alpha^+ < 2.9$. 

Let $\eta > 0$ be fixed and small enough. Let $d=W=\prod_{p\le 0.1\log\log{x}}p$ and let $g$ be a smooth function supported on $[x,x+x^\theta]$ which is 1 on $[x+\eta x^\theta,x+(1-\eta )x^\theta]$ and satisfies $g^{(j)}(t)\ll_{j} x^{-j\theta}$. For any $\gamma \in \R$, we may write $\gamma = a/q + \lambda$ for some $(a,q)=1$ with $q \leq Q$ and $|\lambda| < 1/qQ$ as in the statement of Proposition~\ref{prop:SAz}. Then, by Proposition \ref{prop:SAz} we get
\[
\sum_{\substack{ n \in[x, x + x^\theta]  \\ n\equiv c\pmod{W} }} g(n) \rho^+(n) e\left(\gamma n \right) = \frac{\alpha^+ + o(1)}{\log x} \cdot \frac{[W,q]}{\varphi([W,q])} \sum_{\substack{n \in[x, x + x^\theta] \\ n\equiv c\pmod{W} \\  (n, q) = 1}} g(n) e\left(\gamma n \right) + O\left(\frac{x^{\theta}}{\log^Ax}\right),
\]
and if $q\ge \log^{5A}{x}$ then all terms in this equation are $O(x^\theta/\log^A{x})$. By a simple sieve upper bound, we see that the contribution to the left hand side from $n\in [x,x+\eta x^\theta]$ and $n\in [x+(1-\eta)x^\theta,x+x^\theta]$ is $O(\eta x^\theta/\varphi(W)\log{x})$. Thus, we find
\begin{align*}
\sum_{\substack{ n \in[x, x + x^\theta]  \\ n\equiv c\pmod{W} }}\rho^+(n) e\left(\gamma n \right) &= \sum_{\substack{ n \in[x, x + x^\theta]  \\ n\equiv c\pmod{W} }} g(n)\rho^+(n) e\left(\gamma n \right)+O\Bigl(\frac{\eta x^\theta}{\varphi(W) \log{x}}\Bigr)\\
&=\frac{\alpha^+ + o(1)}{\log x} \cdot \frac{[W,q]}{\varphi([W,q])} \sum_{\substack{n \in[x, x + x^\theta] \\ n\equiv c\pmod{W} \\  (n, q) = 1}} g(n) e\left(\gamma n \right)+O\Bigl(\frac{\eta x^\theta}{\varphi(W) \log{x}}\Bigr)\\
&=\frac{\alpha^+ + o(1)}{\log x} \cdot \frac{[W,q]}{\varphi([W,q])} \sum_{\substack{n \in[x, x + x^\theta] \\ n\equiv c\pmod{W} \\  (n, q) = 1}}e\left(\gamma n \right)+O\Bigl(\frac{\eta x^\theta}{\varphi(W) \log{x}}\Bigr).
\end{align*}
If $q\ge \log^{5A}{x}$ then all terms are $O(\eta x^\theta/\varphi(W)\log{x})$, which gives Theorem \ref{thm:vin3} in this case. If instead $q\le \log^{5A}{x}$ then $[W,q]/\varphi([W,q])=(1+o(1))W/\varphi(W)$ and the contribution to the summation from $n\equiv c\pmod{W}$ with $(n,q)\ne 1$ is $o(x^\theta/\varphi(W)\log{x})$. From this condition $(1)$ of Theorem \ref{thm:vin3} follows immediately.


It remains to prove Proposition~\ref{prop:SAz}. The proof is very similar to the proof in~\cite[Section 3]{Baker-Harman-4/7} but for completeness we give a relatively self-contained proof. We split the proof of Proposition~\ref{prop:SAz} into two parts according to the size of $q$ appearing in the Diophantine approximation for $\gamma$. We let $Q=x^{2\theta-1}\log^{-5A}{x}$ and put
\[
\mathcal{M}=\bigcup_{q\le \log^{5A}{x}}\bigcup_{(a,q)=1}\Bigl[\frac{a}{q}-\frac{1}{q Q},\frac{a}{q}+\frac{1}{q Q}\Bigr].
\]
We first decompose $\rho^+$ into simpler Type I and Type II sequences in Section~\ref{sec:SieveDecomposition}, and then establish Proposition~\ref{prop:SAz} for $\gamma\notin\mathcal{M}$ in Section~\ref{sec:Minor} and for $\gamma\in\mathcal{M}$ in Section~\ref{sec:Major}.

\subsection{Sieve decomposition}\label{sec:SieveDecomposition}

In this section we give a combinatorial decomposition (Lemma~\ref{lem:SieveDecomposition} below) of the function $\rho^+$, showing that it can be rewritten as a sum of `Type I' terms (sequences of the form $a*1$ where $a$ is supported on small numbers) and special `Type II' terms (sequences approximately of the form $a*b*\rho$ where $a$ and $b$ have support in dyadic ranges close to $x^{1/2}$, and $\rho$ is the indicator function of the primes.) In what follows we always set
\[ \omega=\exp((\log{x})^{9/10}). \]


\begin{lemma}[Fundamental Lemma]\label{lem:fundamentallem}
Let $\{a_m\}$ be a complex sequence supported on $m\le x^{\theta-2\varepsilon}$ satisfying $|a_m|\ll \tau(m)^{O(1)}$ .  Let
\[ \ee(n) = \sum_{mh=n} a_m \rho(h, \omega) - \sum_{\substack{mdh = n \\ d < x^{\ee}, d|P(\omega)}} a_m\mu(d). \] 
Then we have
\[ 
\sum_{n \in [x,x+x^{\theta}]} |\ee(n)| \ll \frac{x^{\theta}}{\log^{B}x}
\]
for any $B \geq 1$.
\end{lemma}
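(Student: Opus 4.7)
My plan is to prove the lemma via a Buchstab decomposition combined with Rankin's trick on the Möbius tail. Combining the two pieces in the definition of $\varepsilon(n)$ and invoking the identity $\rho(h,\omega) = \sum_{d\mid (h, P(\omega))} \mu(d)$, the error rewrites as
\[ \varepsilon(n) = \sum_{\substack{mdh = n \\ d\mid P(\omega),\ d \geq x^{\varepsilon}}} a_m \mu(d), \]
so it suffices to bound the $L^1$ norm of the Möbius tail $E(h) := \sum_{d\mid h,\, d\mid P(\omega),\, d \geq x^\varepsilon} \mu(d)$ weighted by the $a_m$.

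The key combinatorial step is to write each squarefree $d\mid P(\omega)$ with $d \geq x^\varepsilon$ in decreasing order of primes, $d = p_1 \cdots p_k$ with $p_1 > \cdots > p_k$, and factor $d = d_* d'$ where $d_* = p_1 \cdots p_j$ is the shortest prefix reaching $x^\varepsilon$. Summing $\mu(d')$ over the admissible $d'$ (those with all prime factors $< P^-(d_*)$) produces $\rho(h/d_*, P^-(d_*))$, giving
\[ E(h) = \sum_{\substack{d_* \in \mathcal{D} \\ d_*\mid h}} \mu(d_*)\, \rho(h/d_*, P^-(d_*)), \qquad \mathcal{D} := \bigl\{d : d\mid P(\omega),\ d \geq x^\varepsilon,\ d/P^-(d) < x^\varepsilon\bigr\}. \]
The structural point is that each $d_* \in \mathcal{D}$ satisfies $d_* < x^\varepsilon P^-(d_*) \leq x^\varepsilon \omega = x^{\varepsilon + o(1)}$, so the critical divisors remain rather small.

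Applying the triangle inequality together with $|\mu|, \rho \leq 1$ and the trivial bound $\#\{h' : m d_* h' \in [x, x+x^\theta]\} \leq x^\theta/(md_*) + 1$,
\[ \sum_{n \in [x, x+x^\theta]} |\varepsilon(n)| \ll \sum_{d_* \in \mathcal{D}} \sum_{m \leq x^{\theta - 2\varepsilon}} |a_m| \Bigl(\frac{x^\theta}{md_*} + 1\Bigr). \]
The main term factors as $x^\theta \cdot \bigl(\sum_m |a_m|/m\bigr) \cdot \bigl(\sum_{d_* \in \mathcal{D}} 1/d_*\bigr)$; the second sum is handled by Rankin's trick with $s = 1/\log\omega$:
\[ \sum_{d\mid P(\omega),\ d \geq x^\varepsilon} \frac{1}{d} \leq x^{-\varepsilon s} \prod_{p \leq \omega}(1 + p^{s-1}) \ll (\log\omega)^{O(1)}\exp\bigl(-\varepsilon (\log x)^{1/10}\bigr), \]
which decays faster than any fixed power of $\log x$; combined with $\sum_m |a_m|/m \ll (\log x)^{O(1)}$ this gives an acceptable contribution. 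For the $+1$ term, I would invoke the standard smooth-number bound $|\mathcal{D}| \leq \Psi(x^\varepsilon \omega, \omega) \leq x^{\varepsilon+o(1)}$ (the parameter $u = \log(x^\varepsilon\omega)/\log\omega \asymp \varepsilon(\log x)^{1/10}$ tends to infinity, so $\Psi(y,z) \ll y\, u^{-u(1+o(1))}$ applies), together with $\sum_m |a_m| \ll x^{\theta - 2\varepsilon + o(1)}$, yielding a total $\ll x^{\theta - \varepsilon + o(1)} \ll x^\theta/\log^B x$.

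The principal obstacle is finding a decomposition that confines the relevant divisors to $d_* \leq x^{\varepsilon + o(1)}$; without this, the loss in the triangle inequality would swamp any savings from the Möbius truncation. The Buchstab splitting above achieves exactly this, after which the remainder is a routine interplay between Rankin's trick (exploiting that $\omega = \exp((\log x)^{9/10})$ is much smaller than $x^\varepsilon$) and smooth-number counts.
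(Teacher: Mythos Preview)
Your proof is correct and follows essentially the same route as the paper. The paper cites Heath-Brown's identity
\[
\rho(h,\omega)=\sum_{\substack{d\mid h,\ d\mid P(\omega)\\ d<x^\varepsilon}}\mu(d)+O\Bigl(\sum_{\substack{d\mid h,\ d\mid P(\omega)\\ x^\varepsilon<d<\omega x^\varepsilon}}1\Bigr)
\]
and then bounds $\sum_{x^\varepsilon<d<\omega x^\varepsilon,\ d\mid P(\omega)} 1/d$ via Hildebrand's smooth-number density; your Buchstab splitting $d=d_* d'$ is precisely the proof of this identity (your set $\mathcal{D}$ is exactly the range $[x^\varepsilon,\omega x^\varepsilon)$ of $\omega$-smooth squarefree $d$), and your Rankin argument is the standard elementary substitute for the smooth-number citation.
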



\begin{proof}
By \cite[Lemma 15]{HeathBrown} we have
\[
\rho(h,\omega)=\sum_{d|h,\, d|P(\omega)}\mu(d)=\sum_{\substack{d|h,\, d|P(\omega)\\ d<x^\varepsilon}}\mu(d)+O\Bigl(\sum_{\substack{d|h,\,d|P(\omega)\\ x^\varepsilon<d<\omega x^\varepsilon}}1\Bigr).
\]
It follows that
\[ \ee(n) \ll \sum_{\substack{mdh = n \\ d|P(\omega),x^{\ee}<d<\omega x^{\ee}}} |a_m| \ll \sum_{\substack{mdh = n \\ d|P(\omega),x^{\ee}<d<\omega x^{\ee} \\ m\leq x^{\theta-2\ee}}} (\tau(m))^{O(1)}. \] 
Performing the summation over $n$, we obtain
\[
\sum_{n \in [x,x+x^{\theta}]} |\ee(n)| \ll \sum_{\substack{m dh\in[x,x+x^\theta] \\ d|P(\omega),\, x^\varepsilon<d<\omega x^\varepsilon \\ m\le x^{\theta-2\varepsilon}}}\tau(m)^{O(1)}\ll x^\theta\Bigl(\sum_{m\le x^{\theta-2\varepsilon}}\frac{\tau(m)^{O(1)}}{m}\Bigr)\Bigl(\sum_{\substack{d|P(\omega) \\ x^\varepsilon<d<\omega x^\varepsilon}}\frac{1}{d}\Bigr).
\]
The first sum in the parentheses is $O((\log{x})^{O(1)})$. It follows from the density of smooth numbers (see \cite[Theorem 1]{Hildebrand}, for example) that the second sum is
\[
\ll \exp\Bigl(-\frac{1}{8}u\varepsilon\log (u\varepsilon)\Bigr)\ll \exp(-(\log{x})^{1/10}),
\]
where $u=\log{x}/\log{\omega}=(\log{x})^{1/10}$. This gives the result.
\end{proof}

\begin{lemma}[Sieve decomposition]\label{lem:SieveDecomposition}
There exists complex sequences $c^{(I)}_j(n)$, $c^{(II)}_{j}(n)$ for $1\le j\le J\ll \log^{O(1)}{x}$ such that the following holds.
\begin{itemize}
\item Approximation to $\rho^+$: Let $\theta \in [1/2+2\ee, 1]$. We have
\[ \sum_{n \in [x,x+x^{\theta}]} \left| \rho^+(n) - \sum_{j\leq J}(c_j^{(I)}(n) + c_j^{(II)}(n)) \right| \ll \frac{x^{\theta}}{\log^B x} \]
for any $B \geq 1$.
\item $c_j^{(I)}$ are Type I coefficients: There exists a sequence $b_j(m)\ll \tau(m)^{O(1)}$ supported on $(m,P(\omega))=1$ such that
\[
c_j^{(I)}(n)=\sum_{\substack{m|n \\ m \leq x^{1/2+\varepsilon} }}b_{j}(m).
\]
\item $c_j^{(II)}$ are Type II coefficients: There exists sequences $e_j(m;\mathbf{t}),f_j(m;\mathbf{t})\ll \tau(m)^{O(1)}$ supported on $(m,P(\omega))=1$ which are smooth functions of $\mathbf{t}=(t_1,t_2,t_3)$ such that
\[
c_j^{(II)}(n)=\int_{-x^{1+\varepsilon}}^{x^{1+\varepsilon}} \int_{-x^{1+\varepsilon}}^{x^{1+\varepsilon}} \int_{-x^{1+\varepsilon}}^{x^{1+\varepsilon}}\sum_{\substack{ m l p=n \\ m\sim E\\ l\sim F \\ p\sim P}}e_{j}(m;\mathbf{t})f_j(l;\mathbf{t})p^{i(t_1+t_2+t_3)} K(t_1, t_2 ,t_3)dt_1 dt_2 dt_3
\]
for some function $K(t_1,t_2,t_3)\ll (\log{x})^{O(1)}/\prod_{i=1}^{3}(1+|t_i|)$ and quantities $E=x^{\beta_1}$, $F=x^{\beta_2}$, $P\asymp x^{1-\beta_1-\beta_2}>\exp(\log^{9/10}{x})$ satisfying $|\beta_1-\beta_2|<1/10+\varepsilon$ and $\beta_1+\beta_2>9/10-\varepsilon$.
\end{itemize}
\end{lemma}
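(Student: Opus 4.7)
The plan is to combine an iterated Buchstab decomposition with the Fundamental Lemma (Lemma~\ref{lem:fundamentallem}). The first step exploits the fact that $\rho^+$ was constructed so that the Buchstab expansion of $\rho(n,x^{1/4})$ collapses. Setting $z=x^{1/10}$ and applying Buchstab's identity three times at threshold $z$ gives
\begin{align*}
\rho(n,x^{1/4}) &= \rho(n,z) - \sum_{\substack{z\leq p_3<x^{1/4} \\ p_3\mid n}}\rho(n/p_3,z) + \sum_{\substack{z\leq p_2<p_3<x^{1/4} \\ p_2p_3\mid n}}\rho(n/(p_2p_3),z) \\
&\quad - \sum_{\substack{z\leq p_1<p_2<p_3<x^{1/4} \\ p_1p_2p_3\mid n}}\rho(n/(p_1p_2p_3),p_1),
\end{align*}
and the final term is identically the second summand in~\eqref{eq:rho+1}. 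Hence
\[
\rho^+(n) = \rho(n,z) - \sum_{\substack{z\leq p<x^{1/4} \\ p\mid n}}\rho(n/p,z) + \sum_{\substack{z\leq p_2<p_3<x^{1/4} \\ p_2 p_3\mid n}}\rho(n/(p_2 p_3),z).
\]

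The second step is to reduce each surviving $\rho(h,z)$ further to $\rho(h,\omega)$ by the (terminating) inclusion--exclusion identity
\[
\rho(h,z) = \sum_{k\geq 0}(-1)^k\sum_{\substack{\omega\leq q_1<\cdots<q_k<z \\ q_1\cdots q_k\mid h}}\rho(h/(q_1\cdots q_k),\omega),
\]
which one verifies by considering separately the cases where $h$ has $0, 1,$ or more prime factors in $[\omega,z)$. I then invoke Lemma~\ref{lem:fundamentallem} at each resulting $\rho(\cdot,\omega)$ to replace it by $\sum_{d\mid\cdot,\,d\mid P(\omega),\,d<x^{\varepsilon}}\mu(d)$, incurring total $L^1$-error $O(x^{\theta}/\log^{B}x)$. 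The outer primes $p_i\in[z,x^{1/4})$ are then partitioned dyadically, giving $J\ll(\log x)^{O(1)}$ configurations indexed by $j$, while the inner sums over the $q_i$'s are absorbed into the coefficient sequences.

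Within each configuration, I would smooth all remaining sharp cutoffs via Mellin inversion, which supplies a kernel $K(t_1,t_2,t_3)\ll(\log x)^{O(1)}\prod_i(1+|t_i|)^{-1}$ from three smooth cutoffs together with the associated oscillatory factors; using $n^{it}=m^{it}l^{it}p^{it}$ and absorbing the $m$- and $l$-dependent phases into the coefficients $e_j(m;\mathbf{t})$ and $f_j(l;\mathbf{t})$ leaves the residual factor $p^{i(t_1+t_2+t_3)}$ as claimed. Each configuration is then classified: if some subproduct of the available factors has total size $\leq x^{1/2+\varepsilon}$, the piece is declared Type I with $m$ equal to its $\omega$-rough part (the $\omega$-smooth piece $d$ is absorbed into the coefficient $b_j$); otherwise the remaining factors are grouped into two pieces $m,l$ of sizes $E=x^{\beta_1}$ and $F=x^{\beta_2}$, together with a small residual $p$ of size $P\asymp x^{1-\beta_1-\beta_2}$.

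The main obstacle is verifying, for every non-Type-I configuration, that the groupings can be chosen to satisfy both $|\beta_1-\beta_2|<1/10+\varepsilon$ and $\beta_1+\beta_2>9/10-\varepsilon$. The first is handled by refining the dyadic decomposition of the outer prime intervals to logarithmic width below $1/10+\varepsilon$. The second, equivalent to $P<x^{1/10+\varepsilon}$, is precisely where the initial Buchstab cancellation plays its crucial role: since at most two outer primes in $[z,x^{1/4})$ survive, the residual $p$ can always be assembled from the small divisor $d<x^{\varepsilon}$ together with at most one $q_i\in[\omega,z)$, automatically placing $P$ in the range $(\omega,x^{1/10+\varepsilon})$; the remaining factors then have total size exceeding $x^{9/10-\varepsilon}$ and, being a product of a bounded number of variables each within a narrow dyadic range, can be split into two balanced groups $m,l$ each close to $x^{1/2}$ within the allowed tolerance. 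A routine but slightly tedious case analysis over the surviving configurations completes the proof.
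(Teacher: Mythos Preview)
Your first step (collapsing $\rho^+$ via three Buchstab identities to $\sum_{mh=n}a_m\rho(h,z)$ with at most two outer primes) matches the paper exactly. The difficulty is in the second step, and here your approach has a real gap.

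You expand each $\rho(h,z)$ by the \emph{full} inclusion--exclusion down to level $\omega$, then say you will ``absorb'' the inner primes $q_i$ into the coefficient sequences and classify each configuration as Type~I or Type~II according to the size of a subproduct. These two moves are incompatible. If the $q_i$'s are absorbed unlocalised into coefficients, then a single configuration (fixed outer-prime dyadic ranges, fixed $k$) contains terms where $q_1\cdots q_k$ ranges all the way from $\omega^k$ to $z^k$; such a configuration is a mixture of Type~I--like and Type~II--like terms and cannot be written in either of the two required forms. If instead you dyadically localise each $q_i$ to recover the size information needed for classification, then with $k$ as large as $(\log x)^{1/10}$ you get far more than $(\log x)^{O(1)}$ configurations. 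Either way the stated decomposition fails.

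The paper avoids this by building the Type~I/Type~II split into the Buchstab iteration itself via a \emph{stopping-time} threshold: one continues to apply Buchstab as long as $mp_1\cdots p_{j-1}p_j^{1/2}<x^{9/20}$, and declares a Type~II term $w_j$ the moment this quantity crosses $x^{9/20}$. This guarantees automatically that the Type~I pieces $v_j$ have $mp_1\cdots p_j d<x^{9/20}z^{1/2}x^{\varepsilon}<x^{1/2+\varepsilon}$, and for the Type~II pieces $w_j$ the two inequalities
\[
mp_1\cdots p_{j-2}p_{j-1}^{1/2}<x^{9/20}\le mp_1\cdots p_{j-1}p_j^{1/2}
\]
together with $p_j<z=x^{1/10}$ \emph{force} $|\beta_1-\beta_2|<1/10+\varepsilon$ with $h\sim x^{\beta_1}$, $mp_1\cdots p_{j-1}\sim x^{\beta_2}$, and $p=p_j$ the extracted prime. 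No post-hoc balancing or case analysis is needed. Your ``routine case analysis'' is not a substitute for this mechanism; in particular your sentence that $p$ ``can always be assembled from the small divisor $d$ together with at most one $q_i$'' is already wrong, since $p$ in the statement must be a single prime, and your claim that the remaining factors can always be partitioned into two groups with $|\beta_1-\beta_2|<1/10+\varepsilon$ is asserted but not proved (the pieces $h'$ and the outer primes can each exceed $1/10$, so a greedy argument does not apply). The missing idea is the $x^{9/20}$ threshold.
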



\begin{proof}
Our argument is essentially identical to \cite[Lemma 3.8]{BHP97} with slightly different parameters, but we give a proof for completeness. From the definition of $\rho^+$ in~\eqref{eq:rho+1} and Buchstab's identity, we obtain
\begin{align*}
\rho^+(n)&=\rho(n,z)-\sum_{\substack{ph=n\\ z<p<x^{1/4}}}\rho(h,z)+\sum_{\substack{p_1p_2h=n\\ z<p_2<p_1<x^{1/4}}}\rho(h,z)\\
&=\sum_{m h=n}a_m\rho(h,z)
\end{align*}
for some coefficients $|a_m|\ll \tau(m)$ supported on $m\le x^{1/2}$. Let
\begin{align*}
u_j(n)=\sum_{\substack{mp_1\dots p_j h=n \\ \omega< p_j<\dots<p_1<z \\ mp_1\dots p_{j-1}p_j^{1/2}<x^{9/20} }}a_m\rho(h,p_j),\\
v_j(n)=\sum_{\substack{mp_1\dots p_j h=n \\ \omega< p_j<\dots<p_1<z \\ mp_1\dots p_{j-1}p_j^{1/2}<x^{9/20} }}a_m\rho(h,\omega),\\
w_j(n)=\sum_{\substack{mp_1\dots p_j h=n \\ \omega< p_j<\dots<p_1<z \\ mp_1\dots p_{j-2}p_{j-1}^{1/2}<x^{9/20} \\ mp_1\dots p_{j-1}p_j^{1/2}\ge x^{9/20} }}a_m\rho(h,p_j).
\end{align*}
Then Buchstab's identity gives
\[
u_j(n)=v_j(n)-w_{j+1}(n)-u_{j+1}(n).
\]
Clearly $u_j(n)=v_j(n)=w_j(n)=0$ for all $n\ll x$ if $j\ge 2\log{x}$, since $p_1\dots p_j\ge x^{2+o(1)}$ in this case. Thus, by first applying Buchstab's identity once and then repeatedly applying the above identity, we obtain
\begin{align*}
\rho^+(n)=\sum_{m h=n}a_m\rho(h,z)&=\sum_{m h=n}a_m\rho(h,\omega)-u_1(n)-w_1(n)\\
&=\sum_{m h=n}a_m\rho(h,\omega)+\sum_{j=1}^{\lfloor 2\log{x}\rfloor}(-1)^j(v_j(n)+w_j(n)).
\end{align*}
By Lemma~\ref{lem:fundamentallem} we have
\[
v_j(n)=\sum_{\substack{mp_1\dots p_j h=n \\ \omega < p_j<\dots<p_1<z \\ mp_1\dots p_{j-1}p_j^{1/2}<x^{9/20} }}a_m\rho(h,\omega)=\sum_{\substack{mp_1\dots p_j d h=n \\ \omega < p_j<\dots<p_1<z \\ mp_1\dots p_{j-1}p_j^{1/2}<x^{9/20}\\ d<x^{\varepsilon},\, d|P(\omega) }}a_m\mu(d)+\varepsilon_j(n)
\]
for some $\varepsilon_j(n)$ satisfying $\sum_{n\in[x,x+x^\theta]}|\varepsilon_j(n)|\ll x^\theta \log^{-B}{x}$ for any $B \geq 1$. We then see that $c_j^{(I)}(n):=v_j(n)-\varepsilon_j(n)$ satisfies the criteria of the Lemma, by taking
\[
b_j(k)=\sum_{\substack{mp_1\dots p_j d =k \\ \omega < p_j<\dots<p_1<z \\ mp_1\dots p_{j-1}p_j^{1/2}<x^{9/20}\\ d<x^{\varepsilon},\, d|P(\omega) }}a_m\mu(d),
\]
and noting that $mp_1\dots p_j d <x^{9/20}z^{1/2}x^{\varepsilon}<x^{1/2+\varepsilon}$. Similarly the term $\sum_{m h=n}a_m\rho(h,\omega)$ is also a Type I coefficient plus a negligible error.

Thus, to complete the proof we need to show each $w_j(n)$ can be written as a combination of Type II coefficients and a small error. We first decompose $w_j(n)$ into $O(\log^3{x})$ terms $w_{j;\beta_1,\beta_2,\beta_3}(n)$ for different $(\beta_1,\beta_2,\beta_3)\in\mathbb{R}^3_{>0}$ where we have the additional dyadic restrictions $h\sim x^{\beta_1}$, $mp_1\dots p_{j-1}\sim x^{\beta_2}$ and $p_j\sim x^{\beta_3}$ in the summation of $w_j(n)$. Let us first see that $w_{j;\beta_1,\beta_2,\beta_3}(n)=0$ for all $n\asymp x$ unless $(\beta_1,\beta_2,\beta_3)$ satisfy the inequalities of the Lemma. In particular, since $m p_1\dots p_j h=n\asymp x$ we have that $w_{j;\beta_1,\beta_2,\beta_3}(n)=0$ unless $\beta_1+\beta_2+\beta_3=1+o(1)$. Since $p_j<z=x^{1/10}$, we have that $w_{j;\beta_1,\beta_2,\beta_3}(n)=0$ unless $1-\beta_1-\beta_2 = \beta_3 + o(1)<1/10+\varepsilon$. Similarly, since 
\[
x^{\beta_2-\beta_1}\asymp \frac{mp_1\dots p_{j-1}}{h}\asymp \frac{(mp_1\dots p_{j-1}p_j^{1/2})^2}{x}\gg x^{-1/10}
\]
by the restriction $mp_1\dots p_{j-1}p_j^{1/2}>x^{9/20}$ in $w_j(n)$, and
\[
x^{\beta_2-\beta_1} \asymp \frac{(mp_1\dots p_{j-1}p_j^{1/2})^2}{x}\ll x^{-1/10} p_{j-1} p_j \ll x^{-1/10}z^2\ll x^{1/10}
\]
by the restrictions $mp_1\dots p_{j-2}p_{j-1}^{1/2}<x^{9/20}$ and $p_j < p_{j-1}<z$, we see that $w_{j;\beta_1,\beta_2,\beta_3}(n)=0$ unless $|\beta_2 - \beta_1| < 1/10+\varepsilon$. This verifies that we only need consider $\beta_1+\beta_2>9/10-\varepsilon$ and $|\beta_1-\beta_2|<1/10+\varepsilon$, as required. 

We now decompose $w_{j;\beta_1,\beta_2,\beta_3}(n)$ further to remove the dependencies between $p_j$ or $h$ and the remaining variables. Let $w_{j;\beta_1,\beta_2,\beta_3;r}(n)$ denote the sum $w_{j;\beta_1,\beta_2,\beta_3}(n)$ with the additional restriction that $h=p'_1\dots p'_r$ for some primes $p_i'$ with $p_1'\ge \dots \ge p'_r$. Then we may replace $\rho(h,p_j)$ with 1 at the cost of the extra condition that $p_r'>p_j$. Clearly $w_{j;\beta_1,\beta_2,\beta_3;r}(n)=0$ if $r\ge \log{x}$ since $h<x$, so this decomposes $w_{j;\beta_1,\beta_2,\beta_3}(n)$ into $O(\log{x})$ further terms.

Finally, we apply Perron's formula three times to remove the multiplicative dependencies between $p_1'\dots p_r'$, $mp_1\dots p_{j-1}$ and $p_j$ which occur in the inequalities $p_j<p_r'$, $p_j<p_{j-1}$ and $mp_1\dots p_{j-1}p_j^{1/2} \geq x^{9/20}$. For example, since $p_j,p_{j-1}\ll x$, Perron's formula gives
\[
\sum_{\substack{m,p_1,\dots,p_j,p_1'\dots  p_r'\\ p_j<p_{j-1}}}^*a_m=\frac{1}{2\pi i}\int_{-T}^{T}\sum^*_{m,p_1,\dots,p_j,p_1'\dots, p_r'}a_m\Bigl(\frac{p_{j-1}-1/2}{p_j}\Bigr)^{c+it}\frac{dt}{c+it}+O\Bigl(\frac{x^{1+o(1)}}{T}\Bigr),
\]
where $c\asymp 1/\log{x}$ and by $\sum^*$ we indicate that we have suppressed the further conditions of summation. (We have used $p_{j-1}-1/2$ to ensure the variables are never within more than 1/2 of each other.) We see that the error term contributes $O(x^{1+\theta+o(1)}/T)$ to the sum over $n\in[x,x+x^\theta]$ (using the fact that $p_j,p_{j-1}\ll x$), and so can be ignored if we take $T=x^{1+\varepsilon}$. Repeatedly applying this to remove the three inequalities listed above, and choosing the three values $c_i \asymp 1/\log{x}$ suitably to cancel the real part of the exponent of $p_j$, we are left with a suitable Type II coefficient, and so have completed the decomposition.
\end{proof}

\subsection{Minor Arcs}\label{sec:Minor}

We now establish Proposition~\ref{prop:SAz} when $\gamma$ does not have a rational approximation with small denominator. In this case the frequencies $\e(\gamma n)$ equidistribute so the result follows quickly for any Type I or Type II sequence from standard bounds on exponential sums.


\begin{lemma}[Type I sums]\label{lem:TypeIMinor}
Let $a_m\ll \tau(m)^{O(1)}$ be a complex sequence supported on $m\le x^{\theta-\varepsilon}$, and let $g$ be a smooth function supported on $[x,x+x^\theta]$ with $|g(t)|\ll 1$ and $|g'(t)|\ll x^{-\theta}$. Let $\gamma \in \mathcal{M}^c$ and $d\ll \log^{O(1)}{x}$. Then
\[
\sum_{\substack{m n\in[x,x+x^\theta]\\ m n\equiv c\pmod{d}}}a_mg(m n)\e(m n\gamma)\ll \frac{x^\theta}{\log^{2A}{x}}.
\]
\end{lemma}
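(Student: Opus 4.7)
The plan is to treat this as a standard Type I minor arc estimate: fix $m$, evaluate the inner sum over $n$ by partial summation (geometric series), and then sum the resulting $\min$-bound over $m$ using Dirichlet approximation to $\gamma$.

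First I would dispose of the congruence $mn\equiv c\pmod{d}$. Writing $e=(m,d)$, $d=ed_1$, $m=em_1$, the congruence has no solutions unless $e\mid c$, and otherwise is equivalent to $n\equiv c'\pmod{d_1}$ for a unique residue class $c'$ depending on $m,c,d$. Since $d\ll \log^{O(1)}x$, we have $d_1\ll \log^{O(1)}x$, and this structure will cost only a polylogarithmic factor. Writing $n=c'+d_1k$, the inner sum becomes
\[
\sum_{k\in J_m} g(m(c'+d_1k))\,\e(md_1 k\gamma),
\]
where $J_m$ is an interval of length $\ll x^\theta/(md_1)$. By partial summation, using $|g|\ll 1$ and $|g'|\ll x^{-\theta}$, together with the trivial geometric series bound $\sum_{k\in J}\e(k\alpha)\ll \min(|J|,\|\alpha\|^{-1})$, I obtain
\[
\sum_{k\in J_m} g(m(c'+d_1k))\,\e(md_1 k\gamma)\ \ll\ \min\!\left(\frac{x^\theta}{md_1},\ \frac{1}{\|md_1\gamma\|}\right).
\]

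Next I would sum over $m\le M:=x^{\theta-\varepsilon}$, absorbing $|a_m|\ll\tau(m)^{O(1)}$ and the factor $d_1\ll\log^{O(1)}x$ into a polylogarithmic loss. The remaining sum is of the standard shape
\[
\sum_{m\le M}\tau(m)^{O(1)}\min\!\left(\frac{x^\theta}{m},\ \frac{1}{\|m\gamma'\|}\right),
\]
where $\gamma'=d_1\gamma$. Since $\gamma\notin\mathcal{M}$, applying Dirichlet's theorem with denominator $Q=x^{2\theta-1}\log^{-5A}x$ gives an approximation $\gamma=a/q+\beta$ with $(a,q)=1$, $|\beta|\le 1/(qQ)$, and necessarily $q>\log^{5A}x$ (otherwise $\gamma\in\mathcal{M}$). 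The approximation for $\gamma'=d_1\gamma$ then has denominator $q'=q/(q,d_1)\gg q/\log^{O(1)}x$, still $\ge \log^{4A}x$ for $x$ large.

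Applying the classical bound (Vaughan, cf. \cite[Chapter 25]{Davenport})
\[
\sum_{m\le M}\tau(m)^{O(1)}\min\!\left(\frac{X}{m},\frac{1}{\|m\gamma'\|}\right)\ \ll\ \left(\frac{X}{q'}+M+q'\right)\log^{O(1)}x
\]
with $X=x^\theta$, $M=x^{\theta-\varepsilon}$, I get the three contributions $x^\theta/q'$, $x^{\theta-\varepsilon}$, and $q'$. The first is $\ll x^\theta/\log^{4A}x$ since $q'\ge\log^{4A}x$; the second beats $x^\theta/\log^{2A}x$ by a power of $x$; and the third satisfies $q'\le q\le Q=x^{2\theta-1}\log^{-5A}x\le x^\theta\log^{-2A}x$ since $\theta<1$ (with room to spare). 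Thus in all cases the total is $\ll x^\theta/\log^{2A}x$, as required.

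The only delicate step is the standard divisor-weighted $\min$-sum estimate; everything else is bookkeeping for the congruence and the smooth weight. I do not expect a genuine obstacle, since the whole argument rests on $q$ being forced into the minor-arc range $\log^{5A}x<q\le Q$, which immediately beats the desired saving $\log^{2A}x$ with a large polylogarithmic margin.
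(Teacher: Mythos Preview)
Your approach is essentially the same as the paper's: fix $m$, bound the inner geometric sum (in the residue class mod $d$) by $\min(x^\theta/m,\|dm\gamma\|^{-1})$ via partial summation, and then control the sum over $m$ using the standard min-sum estimate together with the minor-arc Diophantine information $\log^{5A}x < q \le Q$.

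The one point worth flagging is your ``divisor-weighted'' bound
\[
\sum_{m\le M}\tau(m)^{O(1)}\min\Bigl(\frac{X}{m},\frac{1}{\|m\gamma'\|}\Bigr)\ll\Bigl(\frac{X}{q'}+M+q'\Bigr)\log^{O(1)}x,
\]
which is not quite what appears in Davenport: the classical estimate has no $\tau(m)^{O(1)}$ weight, and the pointwise bound $\tau(m)^C\ll m^{\delta}$ is too crude here since you only have polylogarithmic room. The paper handles this by inserting a Cauchy--Schwarz step before the min-sum estimate, bounding the original sum by
\[
\Bigl(\sum_{m\le x^{\theta-\varepsilon}}\min\Bigl(\frac{x^\theta}{m},\|dm\gamma\|^{-1}\Bigr)\Bigr)^{1/2}\Bigl(x^\theta\sum_{m\le x^{\theta-\varepsilon}}\frac{\tau(m)^{O(1)}}{m}\Bigr)^{1/2},
\]
so that the divisor weight sits against the trivial bound $x^\theta/m$ and costs only $\log^{O(1)}x$. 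With this adjustment your argument goes through exactly as written.
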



\begin{proof}
This follows from well-known exponential sum estimates. Since $\gamma\in\mathcal{M}^c$, there is a $q \in [\log^{5A}{x}, Q]$ such that $\|\gamma q\|<1/q^2$. Thus
\begin{align*}
\sum_{\substack{m n\in[x,x+x^\theta]\\ mn\equiv c\pmod{d}}}a_m g(mn) \e(m n\gamma)&\ll \sum_{m\leq x^{\theta-\varepsilon}}\tau(m)^{O(1)}\Bigl|\sum_{\substack{n\in[x/m,x/m+x^\theta/m] \\ mn\equiv c\pmod{d} }}g(mn) \e(m n\gamma)\Bigr|\\
&\ll \Bigl(\sum_{m\leq x^{\theta-\varepsilon}}\min\Bigl(\frac{x^\theta}{m},\|dm\gamma\|^{-1}\Bigr)\Bigr)^{1/2}\Bigl(x^\theta \sum_{m\leq x^{\theta-\varepsilon}}\frac{\tau(m)^{O(1)}}{m}\Bigr)^{1/2}\\
&\ll x^{\theta/2}\Bigl(\frac{x^\theta}{q}+x^{\theta-\varepsilon}+q\Bigr)^{1/2}(\log{x})^{O(1)}\ll \frac{x^\theta}{\log^{2A}{x}}.\qedhere
\end{align*}
\end{proof}


\begin{lemma}[Type II sums]\label{lem:TypeIIMinor}
Let $a_m,b_m\ll \tau(m)^{O(1)}$ and let $g$ be a smooth function supported on $[x,x+x^\theta]$ with $|g(t)|\ll 1$ and $|g'(t)|\ll x^{-\theta}$. Let $\gamma\in\mathcal{M}^c$ and $d\ll \log^{O(1)}{x}$. If $x^{1-\theta+\varepsilon}\ll M\ll x^{\theta-\varepsilon}$ then we have
\[
\sum_{\substack{m n\in[x,x+x^\theta]\\ m n\equiv c\pmod{d} \\ m\sim M}}a_m b_n g(mn) \e(m n\gamma)\ll \frac{x^\theta}{\log^{2A}{x}}.
\]
\end{lemma}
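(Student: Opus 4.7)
The plan is to use a Cauchy--Schwarz argument followed by a minor-arc geometric sum estimate, carefully exploiting the short-interval constraint to restrict the difference variable after expansion. By Dirichlet's theorem, the assumption $\gamma \in \mathcal{M}^c$ produces a rational approximation $\gamma = a/q + \lambda$ with $(a,q) = 1$, $\log^{5A} x \leq q \leq Q$, and $|\lambda| \leq 1/(qQ)$, where $Q = x^{2\theta-1}\log^{-5A}x$.

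First I would apply Cauchy--Schwarz in the variable $n$ (which effectively lives in a dyadic range of size $N \asymp x/M$) to obtain
\[
|S|^2 \ll \frac{x}{M}(\log x)^{O(1)} \sum_{n} \bigg|\sum_{m \sim M} a_m g(mn) \e(\gamma mn) \mathbf{1}_{mn \in [x, x+x^\theta],\, mn \equiv c\pmod d}\bigg|^2.
\]
Expanding the square and interchanging summation separates a diagonal term ($m_1 = m_2$), easily bounded by $O(x^\theta (\log x)^{O(1)})$, from an off-diagonal piece whose inner sum is over $n \in I(m_1, m_2)$, the intersection of the two short-interval constraints. The crucial observation is that the short-interval restriction forces $|m_1 - m_2| \ll M x^{\theta-1}$ (rather than the trivial $|m_1 - m_2| \ll M$), while $|I(m_1,m_2)| \ll x^\theta/M^2$. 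Partial summation on the smooth weight $g$ then yields the inner bound $\min(x^\theta/M^2,\, \|(m_1-m_2)\gamma\|^{-1})$, the congruence modulo $d$ contributing only harmless $O(\log^{O(1)}x)$ factors since $d \ll \log^{O(1)}x$.

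Grouping by $k = m_1 - m_2$, using the divisor-type bound $\sum_{m_2 \sim M}\tau(m_2)^{O(1)}\tau(m_2+k)^{O(1)} \ll M(\log x)^{O(1)}$, and invoking the standard Vinogradov--Vaughan estimate
\[
\sum_{1 \leq k \leq K} \min\bigl(L, \|k\gamma\|^{-1}\bigr) \ll \frac{KL}{q} + K\log q + L + q\log q
\]
with $K = Mx^{\theta-1}$ and $L = x^\theta/M^2$ leads, after multiplying by the outer factor $x/M$, to
\[
|S|^2 \ll (\log x)^{O(1)}\left(\frac{x^{1+\theta}}{M} + \frac{x^{2\theta}}{Mq} + Mx^\theta + \frac{x^{1+\theta}}{M^2} + xq\log q\right).
\]
I would conclude by verifying that each of the five summands is $O(x^{2\theta}/\log^{4A}x)$: the first and fourth are killed by $M \gg x^{1-\theta+\varepsilon}$, the third by $M \ll x^{\theta-\varepsilon}$, the second by $q \geq \log^{5A}x$, and the last by $q \leq Q = x^{2\theta-1}\log^{-5A}x$. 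Taking square roots then gives the claimed bound.

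The main point to watch is the tension between the $Mx^\theta$ loss (which caps $M$ from above) and the $x^{1+\theta}/M$ loss (which caps $M$ from below); the symmetric range $x^{1-\theta+\varepsilon} \ll M \ll x^{\theta-\varepsilon}$ is precisely what allows both to be overcome, and the short-interval-induced restriction $|k| \ll Mx^{\theta-1}$ is what prevents the off-diagonal from overwhelming the bound. This restriction is the essential ingredient distinguishing the short-interval case from a naive Type II estimate.
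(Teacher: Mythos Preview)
Your approach is essentially the paper's: Cauchy--Schwarz followed by the standard minor-arc geometric-sum estimate, the only cosmetic difference being that you apply Cauchy--Schwarz in $n$ whereas the paper applies it in $m$. By symmetry the two routes yield the same final bound $(\log x)^{O(1)}\bigl(Mx^\theta + xq + x^{1+\theta}/M + x^{2\theta}/q\bigr)$ for $|S|^2$.

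There is, however, a slip in your estimate for $|I(m_1,m_2)|$. The constraint $m_i n \in [x, x+x^\theta]$ confines $n$ to an interval of length $x^\theta/m_i \asymp x^\theta/M$, so $|I(m_1,m_2)| \ll x^\theta/M$, not $x^\theta/M^2$ (the quantity $x|m_1-m_2|/M^2$ is the \emph{shift} between the two $n$-intervals, not the length of their intersection). With the corrected $L = x^\theta/M$ your terms $x^{2\theta}/(Mq)$ and $x^{1+\theta}/M^2$ become $x^{2\theta}/q$ and $x^{1+\theta}/M$; both are still $O(x^{2\theta}/\log^{4A}x)$ under the hypotheses $q \geq \log^{5A}x$ and $M \gg x^{1-\theta+\varepsilon}$, so the argument goes through after this fix.
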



\begin{proof}
The proof is again standard. By applying the Cauchy-Schwarz inequality, we have
\begin{align*}
&\Bigl|\sum_{\substack{m n\in[x,x+x^\theta]\\ m n\equiv c\pmod{d} \\ m\sim M}}a_m b_n g(m n)\e(m n\gamma)\Bigr|^2\ll M(\log{x})^{O(1)}\sum_{m\sim M}\Bigl|\sum_{\substack{n\in [x/m,x/m+x^\theta/m]\\ m n\equiv c\pmod{d} }}b_n g(m n)\e(\gamma m n)\Bigr|^2\\
&=M(\log{x})^{O(1)}\sum_{n_1,n_2\asymp x/M}\tau(n_1)^{O(1)}\tau(n_2)^{O(1)}\sum_{\substack{m\sim M\\ m n_1,m n_2\in[x,x+x^\theta] \\ m n_1\equiv m n_2\equiv c\pmod{d} }}g(m n_1)\overline{g(m n_2)}\e(m(n_1-n_2)\gamma)\\
&\ll M(\log{x})^{O(1)}\sum_{n_1\asymp x/M}\tau(n_1)^{O(1)}\sum_{\substack{n_2\asymp x/M\\ |n_2-n_1|\ll x^\theta/M}}\min\Bigl(\frac{ x^\theta}{x/M},\|d(n_1-n_2)\gamma\|^{-1}\Bigr)\\
&\ll M\frac{x}{M}(\log{x})^{O(1)}\Bigl(\frac{x^\theta}{x/M}+q\Bigr)\Bigl(1+\frac{x^\theta}{q M}\Bigr)= (\log{x})^{O(1)}\Bigl(M x^\theta+x q+\frac{x^{\theta+1}}{M}+\frac{x^{2\theta}}{q}\Bigr).
\end{align*}
Recalling that $q\ll x^{2\theta-1}\log^{-5A}{x}$ and $x^{1-\theta+\varepsilon}<M<x^{\theta-\varepsilon}$ we see this is $O(x^{2\theta}\log^{-4A}{x})$, giving the result.
\end{proof}


Combining Lemma~\ref{lem:TypeIMinor} and Lemma~\ref{lem:TypeIIMinor} with our sieve decomposition from Lemma~\ref{lem:SieveDecomposition} allows us to prove Proposition~\ref{prop:SAz} when $\gamma\in\mathcal{M}^c$, given by Lemma~\ref{lem:Minor} below.


\begin{lemma}[Proposition~\ref{prop:SAz} for $\gamma$ in minor arcs]\label{lem:Minor}
Let $g$ be a smooth function supported on $[x,x+x^\theta]$ with $|g^{(j)}(t)|\ll_j x^{-j\theta}$ for all $j$. Let $\gamma \in \mathcal{M}^c$ and $d\ll \log^{O(1)}{x}$. Then we have for $\theta\ge 11/20+2\varepsilon$ that
\[
\sum_{\substack{n\in [x,x+x^\theta] \\ n\equiv c\pmod{d} }}\rho^+(n)g(n)\e(n\gamma) \ll \frac{x^{\theta}}{\log^Ax}. 
\]
\end{lemma}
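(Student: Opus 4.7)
The plan is to combine the sieve decomposition of Lemma~\ref{lem:SieveDecomposition} with the Type I and Type II minor arc bounds established in Lemmas~\ref{lem:TypeIMinor} and~\ref{lem:TypeIIMinor}. First, I would apply Lemma~\ref{lem:SieveDecomposition} (with $B$ replaced by a larger constant) to write $\rho^+(n)=\sum_{j\le J}(c_j^{(I)}(n)+c_j^{(II)}(n))+E(n)$, where $\sum_n|E(n)|\ll x^\theta/\log^{B}x$. Since $|g(n)\e(n\gamma)|\le 1$, the contribution of $E$ is already acceptable, so it suffices to show that each of the $J\ll \log^{O(1)}x$ Type I and Type II pieces contributes at most $O(x^\theta/\log^{3A}x)$.

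For a single Type I piece, expanding the divisor sum yields
\[
\sum_{\substack{n\in[x,x+x^\theta]\\ n\equiv c\pmod{d}}}c_j^{(I)}(n)g(n)\e(n\gamma)=\sum_{m\le x^{1/2+\varepsilon}}b_j(m)\sum_{\substack{mk\in[x,x+x^\theta]\\ mk\equiv c\pmod{d}}}g(mk)\e(\gamma mk).
\]
Since $\theta\ge 11/20+2\varepsilon$ yields $1/2+\varepsilon<\theta-\varepsilon$, the outer variable $m$ lies in the range required by Lemma~\ref{lem:TypeIMinor}, which delivers the bound $O(x^\theta/\log^{3A}x)$ directly (with $A$ in that lemma replaced by a slightly larger constant if needed).

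For a single Type II piece, I would interchange the three-fold $\mathbf{t}$-integral with the sum over $n$ and apply Lemma~\ref{lem:TypeIIMinor} for each fixed $\mathbf{t}\in[-x^{1+\varepsilon},x^{1+\varepsilon}]^3$. Assume without loss of generality that $\beta_1\ge\beta_2$. The hypotheses $\beta_1+\beta_2>9/10-\varepsilon$, $\beta_1+\beta_2<1$ (since $\beta_3>0$ because $P>\exp(\log^{9/10}x)$), and $|\beta_1-\beta_2|<1/10+\varepsilon$ force
\[
9/20-\varepsilon/2\;<\;\beta_1\;<\;11/20+\varepsilon/2,
\]
which for $\theta\ge 11/20+2\varepsilon$ is strictly contained in the interval $(1-\theta+\varepsilon,\,\theta-\varepsilon)$. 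Setting $a_m=e_j(m;\mathbf{t})$ supported on $m\sim E=x^{\beta_1}$ and $b_n=\sum_{n=lp,\,l\sim F,\,p\sim P}f_j(l;\mathbf{t})p^{i(t_1+t_2+t_3)}$ (both of size $\ll\tau(\cdot)^{O(1)}$ by the divisor bound and the hypotheses on $e_j,f_j$) puts us in the setting of Lemma~\ref{lem:TypeIIMinor} with $M=E$. That lemma then gives a bound of $O(x^\theta/\log^{3A}x)$ uniformly in $\mathbf{t}$, and integrating against the kernel $K$ costs only the absolutely convergent factor $\int |K(\mathbf{t})|\,d\mathbf{t}\ll\log^{O(1)}x$, which is absorbed by choosing $A$ slightly larger at the outset.

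The main obstacle is the Type II dyadic range verification: the bound $\beta_1<11/20+\varepsilon/2$ is tight, and is exactly what forces the threshold $\theta>11/20$. Pushing below this exponent would require either widening the admissible range of $M$ in Lemma~\ref{lem:TypeIIMinor} (which in turn would need a stronger exponential sum estimate) or a sharper sieve decomposition producing Type II terms with narrower dyadic ranges.
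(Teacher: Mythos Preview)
Your proposal is correct and follows essentially the same route as the paper: apply the sieve decomposition of Lemma~\ref{lem:SieveDecomposition}, dispose of the Type~I pieces via Lemma~\ref{lem:TypeIMinor}, and handle each Type~II piece by pulling out the $\mathbf{t}$-integral (at a cost of $\int|K|\,d\mathbf{t}\ll\log^{O(1)}x$) and invoking Lemma~\ref{lem:TypeIIMinor}. The only cosmetic difference is in the Type~II grouping: the paper combines $p$ with the smaller of $m,l$ and checks that $P\min(E,F)\in[x^{9/20-\varepsilon/2},x^{11/20+\varepsilon/2}]$, whereas you keep $m\sim E=\max(E,F)$ as the distinguished variable; since $P\min(E,F)\cdot\max(E,F)\asymp x$, these two quantities lie in the same symmetric range and the verification of the hypothesis $x^{1-\theta+\varepsilon}\ll M\ll x^{\theta-\varepsilon}$ is identical either way.
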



\begin{proof}
By Lemma~\ref{lem:SieveDecomposition} we obtain sequences $c_j^{(I)}(n)$ and $c_j^{(II)}(n)$ such that
\[
\sum_{\substack{n\in [x,x+x^{\theta}] \\ n\equiv c\pmod{d} }}\rho^+(n)g(n)\e(\gamma n)=\sum_{j\le \log^{O(1)}{x} }\sum_{\substack{n\in [x,x+x^\theta] \\ n\equiv c\pmod{d} }}(c_j^{(I)}(n)+c_j^{(II)}(n))g(n)\e(\gamma n)+O\Bigl(\frac{x^\theta}{\log^A{x}}\Bigr).
\]
We have $c_j^{(I)}(n)=\sum_{m h=n}b_j(m)$ for some $b_j(m)\ll \tau(m)^{O(1)}$ supported on $m\le x^{1/2+\varepsilon}$. Thus, by Lemma~\ref{lem:TypeIMinor}, we have
\[
\sum_{\substack{n\in[x,x+x^\theta]\\ n\equiv c\pmod{d} }}c_j^{(I)}(n)g(n)\e(\gamma n)\ll \frac{x^{\theta}}{\log^{2A}{x}}.
\]
We have that
\[
\sum_{\substack{ n\in[x,x+x^\theta] \\ n\equiv c\pmod{d} }}c_j^{(II)}(n)g(n)\e(\gamma n)\ll \log^{O(1)}{x}\sup_{d_p,e_m,f_l}\Bigl|\sum_{\substack{mlp\in[x,x+x^\theta]\\ mlp\equiv c\pmod{d} \\ m\sim E,\, l\sim F,\, p\sim P}} e_m f_l d_p g(m l p) \e(\gamma m l p)\Bigr|,
\]
where the supremum is over all sequences $e_m,f_m,d_m\ll \tau(m)^{O(1)}$. Moreover, $E F P\asymp x$, and $P,E/F,F/E\ll x^{1/10+\varepsilon}$, so $x^{9/20-\varepsilon/2}\ll P\min(F,E)\ll x^{11/20+\varepsilon/2}$. Thus, by Lemma~\ref{lem:TypeIIMinor}, we have
\[
\sum_{\substack{n\in[x,x+x^\theta]\\ n\equiv c\pmod{d} }}c_j^{(II)}(n)g(n)\e(\gamma n)\ll \frac{x^\theta}{\log^{3A/2}{x}}.
\]
Thus the claim follows.
\end{proof}

\subsection{Major arcs}\label{sec:Major}

We now complete the proof of Proposition~\ref{prop:SAz} by dealing with the case when $\gamma\in \mathcal{M}$, and so $\gamma$ does have a rational approximation with small denominator. In this case large and mean value estimates for Dirichlet polynomials can control the Mellin transform of the relevant sums for Type I coefficients or our shape of Type II coefficients. The Mellin transform of a function $h \colon \mathbb{R}^+ \to \mathbb{C}$ is defined to be
\[
H(s) = \int_0^\infty h(u) u^{s-1} du
\]
and its inverse is
\[
h(u) = \frac{1}{2 \pi i} \int_{c-i\infty}^{c+i\infty} H(s) u^{-s} ds.
\]
We begin with a preparatory lemma bounding the Mellin transform of $g(u) e(\lambda(u))$ when $g$ is a smooth function.


\begin{lemma}[Stationary phase]\label{lem:Stationary}
Let $g$ be a smooth function supported on $[x,x+x^\theta]$ satisfying $g^{(j)}(t)\ll_j x^{-j\theta}$ for all $j\ge 0$. Let $0<\lambda<x^{1-2\theta+o(1)}$ and
\[
G_\lambda(s)=\int_0^\infty g(u)u^{s-1}\e(\lambda u)du.
\]
Then for any $j\in\mathbb{Z}_{\ge 0}$ we have
\[
|G_\lambda(1/2+it)|\ll_j x^{\theta-1/2}\Bigl(\frac{x^{1-\theta+o(1)}}{1+|t+2\pi \lambda x|}\Bigr)^{j}.
\]
\end{lemma}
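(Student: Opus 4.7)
The idea is to treat two regimes separately according to the size of $T := t + 2\pi\lambda x$. For $|T| \le x^{1-\theta+o(1)}$ (with a suitable implicit $o(1)$ factor), the trivial estimate
\[
|G_\lambda(1/2+it)| \le \int_x^{x+x^\theta} |g(u)|\, u^{-1/2}\, du \ll x^{\theta-1/2}
\]
will already suffice, since $x^{1-\theta+o(1)}/(1+|T|) \gg 1$ in this range and so the claimed bound is no stronger than the trivial one. For the remaining regime $|T| \ge x^{1-\theta+o(1)}$, I will extract the required decay via iterated integration by parts.

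To set up the integration by parts I will first change variables via $u = x(1+v)$ to rewrite
\[
G_\lambda(1/2+it) = x^{1/2+it}\, \e(\lambda x) \int_0^{x^{\theta-1}} \tilde h(v)\, e^{i\phi(v)}\, dv,
\]
where $\tilde h(v) := g(x(1+v))(1+v)^{-1/2}$ is supported on $[0,x^{\theta-1}]$ and satisfies $|\tilde h^{(k)}(v)| \ll_k x^{k(1-\theta)}$, while $\phi(v) := t\log(1+v) + 2\pi\lambda x v$. Direct computation gives $\phi'(0) = T$, $|\phi'(v) - T| \ll |t|\, x^{\theta-1}$ on the support, and $|\phi^{(k)}(v)| \ll_k |t|$ for all $k \ge 2$. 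Using the hypothesis $\lambda < x^{1-2\theta+o(1)}$ to bound $|t| \le |T| + 2\pi|\lambda| x \le |T| + x^{2-2\theta+o(1)}$, one deduces that $|t|\, x^{\theta-1} \ll |T|$ provided $|T| \ge C x^{1-\theta+o(1)}$ for a suitable constant $C$. Hence $|\phi'(v)| \asymp |T|$ throughout the support, and $|\phi^{(k)}(v)|/|\phi'(v)| \ll x^{1-\theta+o(1)}$ for each $k \ge 2$.

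With these estimates in hand, I will integrate by parts $j$ times using the identity $e^{i\phi(v)} = (i\phi'(v))^{-1}(e^{i\phi(v)})'$; the boundary contributions vanish since $g$, and therefore $\tilde h$, is smooth and compactly supported in the interior of the $v$-interval. A short induction on $j$ will show that the resulting integrand is bounded in $L^\infty$ by $\ll_j (x^{1-\theta+o(1)}/|T|)^j$: each iteration brings in a factor $1/|\phi'| \asymp 1/|T|$, while the accompanying derivative either acts on $\tilde h$ (costing a factor of $x^{1-\theta}$) or on an existing power of $1/\phi'$ (costing $|\phi^{(k)}|/|\phi'| \ll x^{1-\theta+o(1)}$). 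Integrating over the length-$x^{\theta-1}$ support and restoring the $x^{1/2}$ prefactor then yields the desired bound $x^{\theta-1/2}(x^{1-\theta+o(1)}/|T|)^j$.

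The main obstacle I foresee is the bookkeeping in the iterated IBP: since $|\phi^{(k)}|$ is of order $|t|$, which can be much larger than $|\phi'| \asymp |T|$, the $\phi''/\phi'^2$-type terms threaten to spoil the induction. This is controlled precisely by the combination of two facts: the hypothesis $\lambda < x^{1-2\theta+o(1)}$ forces $|\lambda x| \le x^{2-2\theta+o(1)}$, so $|t|/|T| \ll x^{1-\theta+o(1)}$ throughout the non-trivial range; and this ratio matches the scale $x^{1-\theta}$ on which $\tilde h$ varies. Once this uniform estimate is in place, the inductive bound on the iterated IBP integrand is routine.
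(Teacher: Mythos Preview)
Your proposal is correct and takes essentially the same approach as the paper: repeated integration by parts exploiting that the effective phase $t\log u + 2\pi\lambda u$ has no stationary point on the support when $|T|=|t+2\pi\lambda x|$ is large. The only cosmetic difference is that the paper first rewrites $\e(\lambda u) = \e(\lambda x)\,x^{-2\pi i\lambda x}\,u^{2\pi i\lambda x}\,\e\bigl(-\lambda x\, h((u-x)/x)\bigr)$ with $h(t)=\log(1+t)-t$, so that the oscillatory factor becomes the pure power $u^{iT-1/2}$ (whose iterated antiderivative is explicitly $\ll x^{j-1/2}/(1+|T|)^j$) while the $h$-correction is absorbed into a slowly-varying amplitude with derivatives $\ll x^{-j\theta+o(1)}$; this sidesteps the $\phi''/(\phi')^2$ bookkeeping you flagged, but the underlying mechanism---and the role of the hypothesis $\lambda<x^{1-2\theta+o(1)}$ in keeping the amplitude derivatives under control---is identical.
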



\begin{proof}
Let $h(t)=\log(1+t)-t=-\sum_{j\ge 2}(-t)^j/j$. We see that
\[
\lambda u=\lambda x+\lambda x \log{u}-\lambda x\log{x}-\lambda x h\Bigl(\frac{u-x}{x}\Bigr),
\]
so that
\[
G_\lambda(1/2+it)=\e(\lambda x)x^{-2\pi i \lambda x}\int_x^{x+x^\theta}g(u)u^{it+2\pi i \lambda x-1/2}\e\Bigl(-\lambda x h\Bigl(\frac{u-x}{x}\Bigr)\Bigr)du.
\]
To obtain our bound we repeatedly integrate by parts $j$ times. We note that for $|u-x|\ll x^\theta$ there holds
\[
\frac{\partial^j}{\partial u^j}h\Bigl(\frac{u-x}{x}\Bigr)\ll_j
\begin{cases}
x^{2\theta-2-j\theta},\qquad &j\in\{0,1\},\\
x^{-j},\qquad &j\ge 2,
\end{cases}
\]
so that, using $\lambda\ll x^{1-2\theta+o(1)}$ and $\theta<1$, we have
\[
 \frac{\partial^j}{\partial u^j}\lambda x h\Bigl(\frac{u-x}{x}\Bigr)\ll_j x^{-j\theta+o(1)}.
\]
Thus, since $g^{(j)}(t)\ll x^{-j\theta}$ as well, we find that
\[
\frac{\partial^j}{du^j}\Bigl(g(u)\e\Bigl(\lambda x h\Bigl(\frac{u-x}{x}\Bigr)\Bigr)\Bigr) \ll_j x^{-j\theta+o(1)}.
\]
Furthermore, for $y \ll x$, we have
\[
\idotsint\limits_{x<u_1<u_2<\dots <u_j<y}u_1^{-1/2+i(t+2\pi\lambda x)}du_1\dots du_j \ll_j \frac{x^{j-1/2}}{(1+|t+2\pi\lambda x|)^j}.
\]
Using these bounds in the integration by parts gives the result.
\end{proof}


\begin{lemma}[Type I Integrals]\label{lem:TypeIIntegral}
Let $a_m\ll \tau(m)^{O(1)}$ be a complex sequence supported on $m\sim M$. Let $h$ be a smooth function supported on $[x/4M,4x/M]$ which satisfies $|h^{(j)}(t)|\ll (M/x)^j$ for $j\in\{0,1,2\}$. Let $H(s)$ be the Mellin transform of $h$. Let $\chi$ be a Dirichlet character mod $q\ll \log^{O(1)}{x}$ and
\[
M(s)=\sum_{m\sim M}\frac{a_m}{m^s},\quad Z^\flat(s)=\sum_{n\asymp x/M}\frac{h(n)\chi(n)}{n^s}-\delta_\chi \frac{\varphi(q)}{q} H(1-s),\quad \delta_\chi=\begin{cases}1,\quad &\text{if $\chi=\chi_0$;}, \\ 0,&\text{otherwise.}\end{cases}
\]
Then for $T'\ll T^2$ we have
\[
\int_{T'}^{T'+T}|M(1/2+it)Z^\flat(1/2+it)|dt\ll qT^{1+o(1)}+qM^{1/2}T^{1/2+o(1)}.
\]
\end{lemma}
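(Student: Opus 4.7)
\medskip
\noindent\textbf{Proof proposal.} The plan is to apply Cauchy--Schwarz to separate $M(1/2+it)$ and $Z^\flat(1/2+it)$ and to bound the two resulting $L^2$ means individually. The mean square of $M$ is routine: the mean value theorem for Dirichlet polynomials, combined with the coefficient bound $|a_m|\ll \tau(m)^{O(1)}$, gives
\[ \int_{T'}^{T'+T}|M(1/2+it)|^2\,dt \ll (T+M)\sum_{m\sim M}\frac{|a_m|^2}{m} \ll (T+M)x^{o(1)}. \]
It therefore suffices to prove $\int_{T'}^{T'+T}|Z^\flat(1/2+it)|^2\,dt \ll q^2T\cdot x^{o(1)}$, since combining this with the above via Cauchy--Schwarz then gives $\sqrt{(T+M)\cdot q^2T}\cdot x^{o(1)}\ll qT^{1+o(1)}+qM^{1/2}T^{1/2+o(1)}$, as required.

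To bound the mean square of $Z^\flat$, the key idea is to rewrite $Z^\flat(s)$ as a contour integral of $L$-functions via Mellin inversion. Using $h(n)=(2\pi i)^{-1}\int_{(c)} H(w)n^{-w}\,dw$ for $c>1/2$ and exchanging sum and integral yields
\[ \sum_n h(n)\chi(n)n^{-s} = \frac{1}{2\pi i}\int_{(c)}H(w)L(s+w,\chi)\,dw. \]
Shifting the contour from $\mathrm{Re}(w)=c$ down to $\mathrm{Re}(w)=0$ crosses only the simple pole of $L(s+w,\chi_0)$ at $w=1-s$ (present only when $\chi=\chi_0$), whose residue is exactly $\frac{\varphi(q)}{q}H(1-s)$. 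For $s=1/2+it$ this produces the clean representation
\[ Z^\flat(1/2+it) = \frac{1}{2\pi}\int_{-\infty}^{\infty} H(iv)\,L\bigl(\tfrac{1}{2}+i(t+v),\chi\bigr)\,dv. \]

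A twofold integration by parts in $H(iv)=\int h(u)u^{iv-1}\,du$, which is permitted by the hypotheses $|h^{(j)}(t)|\ll (M/x)^j$ for $j\in\{0,1,2\}$, produces the decay $|H(iv)|\ll (1+|v|)^{-2}$. Plugging this into the representation above, applying Cauchy--Schwarz in $v$, and invoking the standard second moment estimate
\[ \int_{T'-|v|}^{T'+T+|v|}|L(\tfrac{1}{2}+iu,\chi)|^2\,du \ll (T+|v|)\log^{O(1)}(qT'), \]
(valid uniformly for $T'\ll T^2$ and $q\ll \log^{O(1)}x$) yields $\int_{T'}^{T'+T}|Z^\flat|^2\,dt\ll T\log^{O(1)}x$, which is more than sufficient. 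The main delicate point is the decay of $H(iv)$: since only two derivatives of $h$ are controlled, precisely two integrations by parts are possible, producing exactly the integrable rate $(1+|v|)^{-2}$ at the boundary of what the hypothesis permits; once this decay and the $L^2$ mean of $L$ on the critical line are in hand, the rest of the proof is an assembly of standard tools.
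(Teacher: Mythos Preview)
Your proposal is correct and follows essentially the same approach as the paper: Cauchy--Schwarz to separate $M$ and $Z^\flat$, the mean value theorem for $M$, and Mellin inversion with a contour shift (picking up the pole of $L(s,\chi_0)$ which cancels the $\delta_\chi$ term) to express $Z^\flat$ as an integral of $L(1/2+i(t+v),\chi)$ against $H(iv)\ll(1+|v|)^{-2}$. The only difference is that where you invoke the second moment of $L(1/2+iu,\chi)$ over a short interval at height $\asymp T'$ as a black box, the paper unpacks this step via the reflection principle, writing $|L(1/2+it,\chi)|$ as a Dirichlet polynomial of length $\ll q(T')^{1/2}$ and applying the mean value theorem directly; this yields the explicit factor $q(T+(T')^{1/2})$, which under $T'\ll T^2$ and $q\ll\log^{O(1)}x$ matches your $T\log^{O(1)}x$.
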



\begin{proof}
By Cauchy-Schwarz we have
\begin{align*}
 \int_{T'}^{T'+T} |M(1/2+it)Z^\flat(1/2+it)|dt&\ll \Bigl(\int_{T'}^{T'+T}|M(1/2+it)|^2dt\Bigr)^{1/2}\Bigl(\int_{T'}^{T'+T}|Z^\flat(1/2+it)|^2dt\Bigr)^{1/2}.
\end{align*}
By the mean value theorem for Dirichlet polynomials, the integral of $|M(1/2+it)|^2$ is $O(M+T)$. For the second factor, we have that, by Mellin inversion, for $\Re(s)=1/2$
\begin{align*}
Z^\flat(s)&=\frac{1}{2\pi i}\int_{2-i\infty}^{2+i\infty}L(s+s',\chi)H(s')ds'-\delta_\chi \frac{\varphi(q)}{q} H(1-s)\\
&=\frac{1}{2\pi i}\int_{-\infty}^\infty H(it)L(s+it,\chi)dt\\
&\ll \int_{-\infty}^\infty \frac{|L(s+it,\chi)|}{1+t^2}dt.
\end{align*}
Here we used the fact that $H(it)\ll 1/(1+t^2)$ from the bounds on the derivatives of $h$ and that the residue at $s'=1-s$ cancels out the $\delta_\chi \frac{\varphi(q)}{q} H(1-s)$ term. By the reflection principle (see, for example \cite[Equation (21)]{Perelli}), there exists coefficients $a^{(1)}_n$, $a^{(2)}_n\ll \tau(n)^{O(1)}$ such that for $t\in [T,2T]$ we have
\[ |L(1/2+it,\chi)|\ll \Bigl|\sum_{n\ll qT^{1/2}}\frac{a^{(1)}_n}{n^{1/2+it}}\Bigr|+\Bigl|\sum_{n\ll qT^{1/2}}\frac{a^{(2)}_n}{n^{1/2+it}}\Bigr|+1.\]
Applying this and the mean value theorem for Dirichlet polynomials, we find
\begin{align*}
\int_{T'}^{T'+T} |Z^\flat(1/2+it_1)|^2dt_1&\ll \int_{T'}^{T'+T}\int_{-\infty}^\infty \frac{|L(1/2+it_1+it_2,\chi)|^2}{1+t_2^2}dt_2dt_1\\
&\ll q\log{T}\int_{-\infty}^{\infty}\frac{T+|T'+t_2|^{1/2}}{1+t_2^2}dt_2\ll q (T+(T')^{1/2})(\log{T})^{O(1)}.
\end{align*}
Putting these bounds together then gives the result.
\end{proof}


\begin{lemma}[Type I sums]\label{lem:TypeI}
Let $0<\lambda<x^{1-2\theta+o(1)}$ and $a_m\ll \tau(m)^{O(1)}$ be a complex sequence supported on $m\le x^{\theta-\varepsilon}$. Let $g$ be a smooth function supported on $[x,x+x^\theta]$ with $|g^{(j)}(t)|\ll x^{-j\theta}$ for all $j\ge 0$, and let $g_1$ be a smooth function supported on $[x,x+h_1]$ with $h_1=x\exp(-\log^{1/2}{x})$ and $|g^{(j)}(t)|\ll h_1^{-j}$ for all $j\ge0$. Let $\chi$ be a Dirichlet character mod $q\ll \log^{O(1)}{x}$. Then
\[
\sum_{m n\in [x,x+x^\theta]}a_m\chi(m n) g(m n)\e(\lambda m n)=\delta_\chi \Bigl(\int_0^\infty g(t)\e(\lambda t)dt\Bigr) \Bigl( \frac{1}{\widehat{g_1}(0)} \sum_{\substack{mn \in [x,x+h_1] \\ (mn, q) = 1}} a_m g_1(mn)\Bigr) +O\Bigl(\frac{x^\theta}{\log^{B}{x}}\Bigr).
\]
for any $B \geq 1$.
\end{lemma}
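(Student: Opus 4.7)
The plan is to use Mellin inversion, picking up the main term from the residue of $L(s,\chi)$ at $s=1$ (nonzero only when $\chi=\chi_0$) and bounding the remaining integral on the critical line by combining the stationary-phase bound of Lemma \ref{lem:Stationary} with the mean-value bound of Lemma \ref{lem:TypeIIntegral}. The same Mellin argument applied to the comparison sum $\sum_{(mn,q)=1,\,mn\in[x,x+h_1]} a_m g_1(mn)$ then rewrites the resulting main term into the form appearing in the lemma statement.

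\textbf{Execution.} Since $a_m$ is supported on $m\le x^{\theta-\varepsilon}$, decompose dyadically: for each $M\le x^{\theta-\varepsilon}$ consider $m\sim M$ and set $N=x/M$. Pick a smooth bump $h$ supported on $[N/4,4N]$, identically $1$ on $[N/2,2N]$, with $h^{(j)}(t)\ll_j (M/x)^j$; since $m\sim M$ and $mn\in[x,x+x^\theta]$ force $n\in[N/2,2N]$, inserting $h(n)$ into the inner sum is harmless. Mellin inversion of $F(u):=g(u)\e(\lambda u)$ then gives
\[
\sum_{m\sim M} a_m\chi(m)\sum_n h(n)\chi(n) g(mn)\e(\lambda mn) = \frac{1}{2\pi i}\int_{(1/2)} G_\lambda(s) M_M(s) Z(s)\,ds,
\]
where $M_M(s)=\sum_{m\sim M} a_m\chi(m)m^{-s}$ and $Z(s)=\sum_n h(n)\chi(n)n^{-s}$. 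Splitting $Z(s)=Z^\flat(s)+\delta_\chi(\varphi(q)/q)H(1-s)$ as in Lemma \ref{lem:TypeIIntegral}, the second piece contributes, via inverse Mellin,
\[
\delta_\chi\frac{\varphi(q)}{q}\sum_{m\sim M} a_m\chi(m)\int_0^\infty h(u)g(mu)\e(\lambda mu)\,du = \delta_\chi\frac{\varphi(q)}{q} G_\lambda(1)\sum_{m\sim M}\frac{a_m\chi(m)}{m},
\]
because $h\equiv 1$ on the support of $g(m\cdot)$. For the $Z^\flat$ remainder, Lemma \ref{lem:Stationary} (with $j$ large) restricts the integral to $|t+2\pi\lambda x|\le T_0:=x^{1-\theta+\varepsilon/2}$, on which $|G_\lambda(1/2+it)|\ll x^{\theta-1/2}$, and Lemma \ref{lem:TypeIIntegral} with $T=2T_0$ (its hypothesis $T'\ll T^2$ holding since $\lambda x\le x^{2(1-\theta)+o(1)}\ll T_0^2$) yields
\[
\int_{|t+2\pi\lambda x|\le T_0} |M_M(1/2+it)Z^\flat(1/2+it)|\,dt \ll qT_0^{1+o(1)}+qM^{1/2}T_0^{1/2+o(1)} \ll x^{1/2-\varepsilon/4+o(1)}.
\]
Multiplying by $x^{\theta-1/2}$ and summing over the $O(\log x)$ dyadic pieces produces an error $\ll x^{\theta-\varepsilon/4+o(1)}$, while summing the dyadic main terms yields
\[
\sum_{mn\in[x,x+x^\theta]} a_m\chi(mn) g(mn)\e(\lambda mn) = \delta_\chi\frac{\varphi(q)}{q} G_\lambda(1)\sum_{(m,q)=1}\frac{a_m}{m} + O\left(\frac{x^\theta}{\log^B x}\right).
\]

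\textbf{Main term conversion and key difficulty.} Running exactly the same Mellin/contour-shift scheme on $\sum_{(mn,q)=1,\,mn\in[x,x+h_1]} a_m g_1(mn)$ — now with $\chi_0$ and $\lambda=0$ in place of $\chi$ and $\lambda$, the effective window shrunk to $T_0':=(x/h_1)\log^{O(1)}x=\exp(\log^{1/2}x)\log^{O(1)}x$ (via standard integration by parts on the Mellin transform of the smooth $g_1$), and $\sup|G_1(1/2+it)|\ll h_1/x^{1/2}$ — gives
\[
\sum_{(mn,q)=1,\,mn\in[x,x+h_1]} a_m g_1(mn) = \frac{\varphi(q)}{q}\widehat{g_1}(0)\sum_{(m,q)=1}\frac{a_m}{m} + O(h_1/\log^B x),
\]
using $\widehat{g_1}(0)=G_1(1)+O(x^{-A})$ since $g_1$ is smooth. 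Dividing by $\widehat{g_1}(0)\asymp h_1$ and multiplying by $\delta_\chi G_\lambda(1)$ (with $|G_\lambda(1)|\le\int|g|\ll x^\theta$) converts the main term from the execution step into exactly the expression stated in the lemma, at the cost of an admissible $O(x^\theta/\log^B x)$ error. The main obstacle lies in the error bound above: the window $T_0=x^{1-\theta+\varepsilon/2}$ and the Type~I length $M\le x^{\theta-\varepsilon}$ satisfy $MT_0\le x^{1-\varepsilon/2}$, just enabling the dominant term $qM^{1/2}T_0^{1/2+o(1)}$ in Lemma \ref{lem:TypeIIntegral} to beat $x^{1/2}$ by a small power of $x$; and the hypothesis $\lambda<x^{1-2\theta+o(1)}$ is precisely what is required so that $T'\ll T^2$ in Lemma \ref{lem:TypeIIntegral}.
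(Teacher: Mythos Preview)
Your proof is correct and follows essentially the same approach as the paper: Mellin inversion with an auxiliary smooth cutoff $h$ on the long variable, splitting $Z=Z^\flat+\delta_\chi(\varphi(q)/q)H(1-s)$, extracting the main term from the $H(1-s)$ piece, bounding the $Z^\flat$-integral by combining Lemma~\ref{lem:Stationary} (to localize to $|t+2\pi\lambda x|\le x^{1-\theta+\varepsilon/2}$) with Lemma~\ref{lem:TypeIIntegral}, and then rerunning the argument on the long interval with $g_1,\chi_0,\lambda=0$ to express the main term in the stated form. Your explicit verification that $T'\ll T^2$ and that $MT_0\le x^{1-\varepsilon/2}$ are the precise places where the hypotheses $\lambda<x^{1-2\theta+o(1)}$ and $M\le x^{\theta-\varepsilon}$ enter is a nice touch; the only (harmless) imprecision is that $\widehat{g_1}(0)=\int g_1=G_1(1)$ exactly, not merely up to $O(x^{-A})$.
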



\begin{proof}
We may clearly assume that $a_m$ is supported on $m\sim M<x^{\theta-\varepsilon}$. Let $G_\lambda(s)$ be the Mellin transform of $g(u)\e(\lambda u)$ given by
\[
G_\lambda(s)=\int_0^\infty u^{s-1}g(u)\e(\lambda u)du.
\]
Let $h$ be a smooth function supported on $[x/4M,4x/M]$ which is 1 on $[x/2M,2x/M]$ and satisfies $|h^{(j)}(t)|\ll (M/x)^j$ for $j\in\{0,1,2\}$. Let $H(s)$ be the Mellin transform of $h$. Thus, inserting an artificial factor $h(n)$ and using Mellin inversion, we find
\begin{align*}
\sum_{m n\in [x,x+x^\theta]}&a_m\chi(m n) g(m n)\e(\lambda m n)=\sum_{m n\in [x,x+x^\theta]}a_m h(n)\chi(m n) g(m n)\e(\lambda m n)\\
&=\frac{1}{2\pi i} \int_{2-i\infty}^{2+i\infty} G_\lambda(s)\Bigl(\sum_{m\sim M}\frac{a_m\chi(m)}{m^s}\Bigr)\Bigl(\sum_{n}\frac{h(n)\chi(n)}{n^s}\Bigr)ds\\
&=\frac{1}{2\pi i} \int_{-\infty}^{\infty} G_\lambda(1/2+it)\Bigl(\sum_{m\sim M}\frac{a_m\chi(m)}{m^{1/2+it}}\Bigr)\Bigl(\sum_{n}\frac{h(n)\chi(n)}{n^{1/2+it}}-\delta_\chi \frac{\varphi(q)}{q} H(1/2-it)\Bigr)dt\\
&\qquad +\frac{\delta_\chi}{2\pi i} \cdot \frac{\varphi(q)}{q} \sum_{m\sim M}\frac{a_m\chi_0(m)}{m}\int_0^\infty g(u)\e(\lambda u)\int_{2-i\infty}^{2+i\infty}\frac{u^{s-1}H(1-s)}{m^{s-1}}ds du.
\end{align*}
By Lemma~\ref{lem:Stationary}, we have $|G_\lambda(1/2+it)|\ll x^{-10}/t^2$ unless $|t+2\pi\lambda x|<x^{1-\theta+\varepsilon/2}$. Thus we may restrict the integral in the first term to an interval of length $O(x^{1-\theta+\varepsilon/2})$ inside $[0,x^{2-2\theta+\varepsilon}]$ at the cost of a negligible error. Since $G_\lambda(1/2+it)\ll x^{\theta-1/2}$, this remaining integral is $O(x^{\theta-\varepsilon/4+o(1)}{x})$ by Lemma~\ref{lem:TypeIIntegral}, and so the first term is $O(x^\theta\log^{-B}{x})$. The second term here is simply 
\[
\frac{\varphi(q)}{q} \delta_\chi\sum_{m\sim M, (m,q)=1}\frac{a_m}{m}\int_0^\infty g(u)\e(\lambda u)h(u/m)du=\delta_\chi\int_0^\infty g(u)\e(\lambda u)du\sum_{m\sim M,(m,q)=1}\frac{a_m}{m},
\]
by Mellin inversion for $h$ and the fact that $h(u/m)=1$ for $m\sim M$ and $u$ in the support of $g$. This shows that
\[
\sum_{m n\in [x,x+x^\theta]}a_m\chi(m n) g(m n)\e(\lambda m n)=\delta_\chi \frac{\varphi(q)}{q} \Bigl(\int_0^\infty g(t)\e(\lambda t)dt\Bigr)\sum_{(m,q)=1}\frac{a_m}{m}+O\Bigl(\frac{x^\theta}{\log^{B}{x}}\Bigr).
\]
Now applying the same argument to the long interval $[x,x+h_1]$ with the smooth function $g_1$ in place of $g$, and taking $\chi= \chi_0$ and $\lambda = 0$, we obtain
\[ 
\sum_{m n\in [x,x+h_1]}\chi_0(mn) a_m g_1(m n)= \frac{\varphi(q)}{q} \widehat{g_1}(0) \sum_{(m,q)=1}\frac{a_m}{m}+O\Bigl(\frac{h_1}{\log^{B}{x}}\Bigr).
\]
The lemma follows by comparing the two equations above.
\end{proof}


\begin{lemma}[Type II integrals]\label{lem:TypeIIIntegral}
Let $a_j,b_j,c_j\in\mathbb{C}$ satisfy $|a_j|$, $|b_j|$, $|c_j|\ll \tau(j)^{O(1)}$ for all $j$, and let
\[
M(s)=\sum_{n\sim M}\frac{a_n}{n^s},\qquad N(s)=\sum_{n\sim N}\frac{b_n}{n^s},\qquad Z(s)=\sum_{n\sim Z}\frac{c_n}{n^s},
\]
where $M N Z\asymp x$. Let $\beta_1,\beta_2$ be such that $M=x^{\beta_1}$, $N=x^{\beta_2}$ and $\beta_1,\beta_2$ satisfy
\begin{equation}
\beta_1+\beta_2>4/5-4\varepsilon,\qquad |\beta_1-\beta_2|<1/10+2\varepsilon.\label{eq:Ranges}
\end{equation}
Let $T=x^{1-\theta+\varepsilon}$, with $\theta\ge 11/20+2\varepsilon$, and assume that for some $T'$ we have that
\begin{equation}
\sup_{t\in [T',T'+T]}\Bigl|Z\Bigl(\frac{1}{2}+it\Bigr)\Bigr|\ll \frac{Z^{1/2}}{\log^{B}{x}}\label{eq:PrimeFactored}
\end{equation}
for any $B \geq 1$.
Then we have
\[
\int_{T'}^{T'+T}\Bigr|M\Bigl(\frac{1}{2}+it\Bigr)N\Bigl(\frac{1}{2}+it\Bigr)Z\Bigl(\frac{1}{2}+it\Bigr)\Bigl|dt\ll \frac{x^{1/2}}{\log^{B}{x}}
\]
for any $B \geq 1$.
\end{lemma}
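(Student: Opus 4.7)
The plan is to bound the integral via H\"older's inequality, combining the mean value theorem for Dirichlet polynomials with the pointwise hypothesis~\eqref{eq:PrimeFactored} on $Z$. Concretely, I would first write
\[
\int_{T'}^{T'+T} |M(\tfrac12+it) N(\tfrac12+it) Z(\tfrac12+it)| \, dt \leq \|M\|_2 \|N\|_2 \sup_{t \in [T',T'+T]} |Z(\tfrac12+it)|,
\]
with the $L^2$ norms taken on $[T', T'+T]$. Since $|a_n|,|b_n| \ll \tau(n)^{O(1)}$, Montgomery's mean value theorem gives $\|M\|_2^2 \ll (T+M)(\log x)^{O(1)}$ and likewise for $N$, while the hypothesis supplies $\sup|Z| \ll Z^{1/2}/\log^B x$ for every $B \geq 1$. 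Combining these three bounds,
\[
\int_{T'}^{T'+T} |MNZ| \, dt \ll \bigl((T+M)(T+N)Z\bigr)^{1/2} \frac{(\log x)^{O(1)}}{\log^B x}.
\]

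In the principal subcase $M, N \geq T$, the factor $(T+M)(T+N)$ is comparable to $MN = x/Z$, so the bound collapses to $x^{1/2}(\log x)^{O(1)}/\log^B x$; replacing $B$ by $B + O(1)$ in the hypothesis absorbs the polylogarithmic loss and yields the claim. The numerical hypotheses $\beta_1+\beta_2 > 4/5 - 4\varepsilon$ and $|\beta_1-\beta_2| < 1/10 + 2\varepsilon$, together with $\theta \geq 11/20 + 2\varepsilon$, are consistent with the balanced Type II configuration $M, N \approx x^{1/2}$, matching the Baker--Harman--Pintz exponent~\cite{BHP97}.

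The main obstacle is the remaining subcase $\min(M, N) < T$, which is still permitted by the hypotheses since they only force $\min(\beta_1,\beta_2) > 7/20 - O(\varepsilon)$. Here $(T+M)(T+N) \gg MN$ and the direct Cauchy--Schwarz step is too lossy by a factor of $T/\min(M,N)$, which can reach $x^{1/10}$. To close the gap I would supplement the argument with a Halász--Montgomery large-values estimate applied to whichever of $M, N$ is the smaller: for a parameter $V$, partition $[T',T'+T]$ into the set where the smaller polynomial exceeds $V$ and its complement; on the large-values set the Halász--Montgomery inequality bounds the measure in terms of $V$ and the $\ell^2$ norm of the coefficients, which combines with $\sup|Z|$ and the trivial pointwise bound on the other polynomial to give a small contribution, while on the complement the pointwise bound $|M|<V$ (or $|N|<V$) combines with mean value estimates on the remaining two factors. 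The delicate part is optimizing $V$ and verifying that the numerical constraints $\theta \geq 11/20 + 2\varepsilon$, $\beta_1+\beta_2 > 4/5-4\varepsilon$, and $|\beta_1-\beta_2|<1/10+2\varepsilon$ are precisely calibrated to make the two contributions sum to at most $x^{1/2}/\log^B x$; this calibration is the technical heart of the lemma.
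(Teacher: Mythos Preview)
Your overall strategy---combine the mean value theorem, Hal\'asz--Montgomery large values, and the pointwise hypothesis on $Z$---is the right one, and it is exactly what the paper does. The paper's proof, however, does not carry out the argument directly: it simply invokes \cite[Lemma~7.3]{Harman-book} (with parameters $\theta=11/20+\varepsilon$, $g=2$) and sketches that lemma's method. That method is the full level-set decomposition: one considers well-spaced sets $\mathcal{S}(\sigma_1,\sigma_2,\sigma_3)$ where $|M|\sim M^{\sigma_1-1/2}$, $|N|\sim N^{\sigma_2-1/2}$, $|Z|\sim Z^{\sigma_3-1/2}$, and bounds $|\mathcal{S}|$ by the minimum of the Huxley-type estimates
\[
|\mathcal{S}| \ll (\log x)^{O(1)}\bigl( L^{2-2\sigma} + T\min(L^{1-2\sigma}, L^{4-6\sigma}) \bigr)
\]
applied with $(L,\sigma)$ equal to $(M,\sigma_1)$, $(N,\sigma_2)$, or crucially $(Z^h,\sigma_3)$ for positive integers $h$. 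The hypothesis \eqref{eq:PrimeFactored} enters only as $Z^{\sigma_3}\ll Z/\log^B x$, supplying the final logarithmic saving once the non-logarithmic numerics have been closed.

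Your single-threshold outline is too coarse in two respects. First, the ``principal subcase'' $M,N\geq T$ is not principal at all: with $\theta=11/20+2\varepsilon$ one has $T=x^{9/20-\varepsilon}$, while the hypotheses allow $\beta_1=\beta_2=2/5-O(\varepsilon)$, so both $M$ and $N$ may lie below $T$. Second, and more seriously, splitting on a single threshold $V$ for $\min(M,N)$ does not close the gap. On the small set $\{|M|\leq V\}$ you propose ``mean value estimates on the remaining two factors'', but $\|N\|_2\|Z\|_2$ discards the hypothesis \eqref{eq:PrimeFactored}, while $\sup|Z|\cdot T^{1/2}\|N\|_2$ is too large by a factor of $(T/N)^{1/2}$. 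The missing ingredient is precisely the use of high moments $Z^h$: since $Z\ll x^{1/5+O(\varepsilon)}$ is short, one can take $h$ with $Z^h\gtrsim T$, and the mean value and large value theorems applied to $Z^h$ yield an extra constraint on $|\mathcal{S}|$ that is essential in the case analysis. This is exactly what the citation to Harman supplies; without it the numerics do not close at $\theta=11/20$.
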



\begin{proof}
This follows from \cite[Lemma 7.3]{Harman-book} with $\theta=11/20+\varepsilon$ and $g=2$ after absorbing a factor $n^{i T'}$ into $a_n,b_n,c_n$ to change the range of integration to $[0,T]$. The proof works by considering sets $\mathcal{S}(\sigma_1,\sigma_2,\sigma_3)$ of well-spaced points $t_1,\dots,t_V$ in $[0,T]$ for which $|M(1/2+it_j)|$, $|N(1/2+it_j)|$ and $|Z(1/2+it_j)|$ are of size $M^{\sigma_1-1/2}$, $N^{\sigma_2-1/2}$ and $Z^{\sigma_3-1/2}$ respectively, and applying the mean-value and large-value bound
\[
\#\mathcal{S}(\sigma_1,\sigma_2,\sigma_3)\ll (\log{x})^{O(1)}\Bigl(M^{2-2\sigma_1}+T\min(M^{1-2\sigma_1},M^{4-6\sigma_1})\Bigr)
\]
and similar bound with $(M, \sigma_1)$ replaced by $(N, \sigma_2)$ or $(Z^h,\sigma_3)$ for positive integers $h$,
along with the property~\eqref{eq:PrimeFactored} that $Z^{\sigma_3}\ll Z(\log{x})^{-B}$ for any $B \geq 1$ if $\mathcal{S}(\sigma_1,\sigma_2,\sigma_3)\ne 0$. Together these can be used to show that whenever~\eqref{eq:Ranges} holds, we have $\#\mathcal{S}(\sigma_1,\sigma_2,\sigma_3)\ll x M^{-\sigma_1}N^{-\sigma_2}Z^{-\sigma_3}(\log{x})^{-B}$ for any $B \geq 1$.
\end{proof}


\begin{lemma}[Type II sums]\label{lem:TypeII}
Let $c_j^{(II)}(n)$ be as in Lemma~\ref{lem:SieveDecomposition}, let $0<\lambda<x^{1-2\theta+o(1)}$ and let $\chi$ be a Dirichlet character mod $q\ll \log^{O(1)}{x}$. Let $g$ be a smooth function supported on $[x,x+x^\theta]$ with $|g^{(j)}(t)|\ll x^{-j\theta}$ for all $j\ge 0$, and let $g_1$ be a smooth function supported on $[x,x+h_1]$ with $h_1=x\exp(-\log^{1/2}{x})$ and $|g^{(j)}(t)|\ll h_1^{-j}$ for all $j\ge0$. Then we have
\[
\sum_{n\in [x,x+x^\theta]}c_j^{(II)}(n)\e(\lambda n)g(n)\chi(n)=\Bigl(\int_0^\infty g(t)\e(\lambda t)dt\Bigr)\Bigl(\frac{\delta_\chi}{\widehat{g}_1(0)}\sum_{n\in [x,x+h_1]}c_j^{(II)}(n)g_1(n)\Bigr)+O\Bigl(\frac{x^\theta}{\log^{B}{x}}\Bigr)
\]
for any $B \geq 1$.
\end{lemma}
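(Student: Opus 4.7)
The plan is to adapt the Mellin-inversion strategy of Lemma~\ref{lem:TypeI}, handling each fixed $\mathbf{t} = (t_1, t_2, t_3)$ and integrating against the kernel $K(\mathbf{t})$ at the end. Since $|K(\mathbf{t})| \ll (\log x)^{O(1)} / \prod_{i=1}^3 (1 + |t_i|)$ has $L^1$-norm $\ll (\log x)^{O(1)}$ on $[-x^{1+\ee}, x^{1+\ee}]^3$, it suffices to establish the desired identity for fixed $\mathbf{t}$ with error $O(x^\theta/(\log x)^{B'})$ uniformly in $\mathbf{t}$, for some $B'$ sufficiently large in terms of $B$.

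Fix $\mathbf{t}$ and set $\tau = t_1+t_2+t_3$. I would introduce the Dirichlet polynomials
\[
M(s) = \sum_{m \sim E} e_j(m;\mathbf{t}) \chi(m) m^{-s}, \quad N(s) = \sum_{l \sim F} f_j(l;\mathbf{t}) \chi(l) l^{-s}, \quad Z(s) = \sum_{p \sim P} \chi(p) p^{i\tau - s},
\]
and let $G_\lambda(s) = \int_0^\infty g(u)\e(\lambda u) u^{s-1}\, du$. After inserting a smooth cutoff around each dyadic range (exactly as in the proof of Lemma~\ref{lem:TypeI}), Mellin inversion expresses the inner sum over $mlp \in [x, x+x^\theta]$ with $m \sim E$, $l \sim F$, $p \sim P$ as
\[
\frac{1}{2\pi i} \int_{-\infty}^{\infty} G_\lambda(\tfrac12+it)\, M(\tfrac12+it)\, N(\tfrac12+it)\, Z(\tfrac12+it)\, dt.
\]
By Lemma~\ref{lem:Stationary}, $G_\lambda(\tfrac12+it)$ is negligible outside a window $I_\lambda$ of length $T \asymp x^{1-\theta+\ee}$ centered at $t = -2\pi\lambda x$, and $|G_\lambda| \ll x^{\theta-1/2}$ on $I_\lambda$, so up to acceptable error only the contribution from $t \in I_\lambda$ matters.

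For non-principal $\chi$, I would establish the pointwise bound $|Z(\tfrac12+it)| \ll P^{1/2}/(\log x)^{B'}$ on $I_\lambda$ via the Vinogradov--Korobov zero-free region for $L(s,\chi)$, applicable because $P > \exp((\log x)^{9/10})$ is large relative to $q \ll (\log x)^{O(1)}$. The dyadic conditions $\beta_1+\beta_2 > 9/10 - \ee$ and $|\beta_1-\beta_2| < 1/10+\ee$ from Lemma~\ref{lem:SieveDecomposition} comfortably satisfy the range hypothesis of Lemma~\ref{lem:TypeIIIntegral} (given $\theta \ge 11/20 + 2\ee$), which then yields $\int_{I_\lambda} |MNZ|\, dt \ll x^{1/2}/(\log x)^{B'}$. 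Multiplying by $\|G_\lambda\|_\infty \ll x^{\theta-1/2}$ gives the required $O(x^\theta/(\log x)^{B'})$ bound, matching the vanishing $\delta_\chi$-main term. For $\chi = \chi_0$, I would split $Z = Z^\sharp + Z^\flat$, where $Z^\sharp(s) \approx \int_P^{2P} u^{i\tau - s}/\log u\, du$ isolates the prime-number-theorem main term and $Z^\flat$ still obeys the pointwise bound above. The $Z^\flat$ contribution is handled as before; the $Z^\sharp$ contribution yields a main term which, when compared with the same Mellin calculation applied to the long-interval sum $\sum_{n \in [x, x+h_1]} c_j^{(II)}(n) g_1(n)$ (with $\chi_0$ and $\lambda = 0$, $g$ replaced by $g_1$), produces exactly the claimed ratio $\bigl(\int g(u)\e(\lambda u)\, du\bigr)/\widehat{g_1}(0)$.

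The main technical difficulty will be establishing the pointwise bound on $Z$ (or $Z^\flat$) \emph{uniformly} in $\mathbf{t}$: since $|t - \tau|$ on $I_\lambda$ can be as large as $\ll x^{1+\ee}$, which lies far outside the range where the zero-free region produces genuine cancellation, one cannot appeal to it directly. The fix is to exploit the smoothness of $e_j(\cdot; \mathbf{t})$, $f_j(\cdot; \mathbf{t})$ and $K(\mathbf{t})$ in $\mathbf{t}$: repeated integration by parts in $\mathbf{t}$ effectively confines the $\mathbf{t}$-integration to $|\mathbf{t}| \leq (\log x)^{O(1)}$, in which regime $|\tau|$ is small enough that the classical zero-free region suffices. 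This maneuver---implicit in the analogous analysis in~\cite[Section 3]{Baker-Harman-4/7}---is the delicate step.
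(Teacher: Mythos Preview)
Your Mellin-inversion framework and the appeal to Lemma~\ref{lem:TypeIIIntegral} are on the right track, and for non-principal $\chi$ your outline is essentially the paper's argument. But your final paragraph misdiagnoses the situation and proposes a fix that would not work.

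The ``main technical difficulty'' you flag is not real. The Vinogradov--Korobov zero-free region \emph{does} give the uniform bound $|Z(\tfrac12+it)| \ll P^{1/2}/\log^{B}x$ over the full range $|t-\tau|\ll x^{1+\ee}$: at height $T\ll x^{O(1)}$ the zero-free width is $\gg (\log x)^{-2/3-o(1)}$, and since $\log P \ge (\log x)^{9/10}$ the resulting saving in $\sum_{p\sim P}\chi(p)p^{-1/2-i(t-\tau)}$ is $\exp(-c(\log x)^{9/10-2/3-o(1)})$, which beats any power of $\log x$. This is exactly how the paper obtains~\eqref{eq:ZeroRegionBound}. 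Your proposed repair via integration by parts in $\mathbf{t}$ would not help anyway: the Perron-type coefficients $e_j(m;\mathbf{t}),f_j(l;\mathbf{t})$ carry factors like $m^{ic_it_i}$, so $\partial_{t_i}$ brings down logarithms rather than decay, and integrating $K$ only produces $\log$-growth.

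The genuine subtlety is in the principal-character case, and here the paper proceeds differently from your $Z^{\sharp}+Z^{\flat}$ split. Fixing $\mathbf{t}$ first, as you do, discards the one piece of structure that rescues the argument when $Z$ has no cancellation. The paper instead performs the Mellin integral in $t$ first, then changes variable $u=t_1+t_2+t_3$ and uses the kernel estimate
\[
\iint \bigl|K(t_1,t_2,u-t_1-t_2)\bigr|\,dt_1\,dt_2 \ \ll\ \frac{(\log x)^{O(1)}}{1+|u|}.
\]
With this in hand one separates $|t|\le T_0$ from $|t|>T_0$ (where $T_0=\exp(\sqrt{\log x})$). For $|t|>T_0$ one further splits by whether $|t+u|>T_0^{1/2}$ (zero-free region applies) or $|t+u|\le T_0^{1/2}$ (then $|u|\gg T_0$, and the $1/(1+|u|)$ decay together with the mean-value theorem for $|MN|$ suffices). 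For $|t|\le T_0$ one Taylor-expands $u^{it-1/2}$ about $u=x$ inside $G_\lambda$, which cleanly factors out $\int g(u)\e(\lambda u)\,du$ and leaves a $t$-integral over $[-T_0,T_0]$ that is identical to what the same manipulation produces for the long interval; this is how the comparison to $\widehat{g_1}(0)^{-1}\sum c_j^{(II)}(n)g_1(n)$ actually emerges. Your $Z^{\sharp}$-route leaves this extraction of the main term unspecified, and without the $1/(1+|u|)$ mechanism it is not clear how you would control the $Z^{\sharp}$-contribution when $\tau$ is large and $t$ lies near $\tau$ outside $I_\lambda$'s complement.
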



\begin{proof}
Let $G_\lambda(s)$ be the Mellin transform of $g(u)\e(\lambda u)$ as in the proof of Lemma~\ref{lem:TypeI}. Then we have
\begin{align}
\sum_{n\in[x,x+x^\theta]}c_j^{(II)}(n)\e(\lambda n)g(n)\chi(n)=\frac{1}{2\pi i}\int_{1/2-i\infty}^{1/2+i\infty}\Bigl(\sum_{n\asymp x}\frac{c_j^{(II)}(n)\chi(n)}{n^s}\Bigr)G_\lambda(s)ds.\label{eq:MellinExpansion}
\end{align}
Since the sum in parentheses is $O(x^{1/2+\varepsilon})$, by Lemma~\ref{lem:Stationary} we have that the integral with $|t+2\pi\lambda x|>T=x^{1-\theta+\varepsilon}$ (where $t = \Im(s)$) contributes a total of $O(1)$, and so we may restrict to $|t+2\pi\lambda x|<T$. Recalling the shape of $c_j^{(II)}(n)$ from Lemma~\ref{lem:SieveDecomposition}, we have
\begin{align*}
\sum_{n\asymp x}\frac{c_j^{(II)}(n)\chi(n)}{n^s}=\int_{-x^{1+\varepsilon}}^{x^{1+\varepsilon}} \int_{-x^{1+\varepsilon}}^{x^{1+\varepsilon}} \int_{-x^{1+\varepsilon}}^{x^{1+\varepsilon}} E(s;\mathbf{t})F(s;\mathbf{t})P(s;\mathbf{t})K(\mathbf{t})dt_1 dt_2 dt_3,
\end{align*}
where $\mathbf{t}=(t_1,t_2,t_3)$ and 
\[
E(s;\mathbf{t})=\sum_{n\sim E}\frac{e_j(n;\mathbf{t})\chi(n)}{n^s}\,\qquad F(s;\mathbf{t})=\sum_{n\sim F}\frac{f_j(n;\mathbf{t})\chi(n)}{n^s},\qquad P(s;\mathbf{t})=\sum_{p\sim P}\frac{\chi(p)}{p^{s+i(t_1+ t_2 +t_3)}}.
\]
Writing $u=t_1+t_2+t_3$ we obtain
\begin{equation}\label{eq:TypeIIsum-1} 
\int_{\mathcal{C}}\Bigl|\sum_{n\asymp x}\frac{c_j^{(II)}(n)\chi(n)}{n^s}\Bigr||ds| \ll \sup_{\mathbf{v}\in\R^3}\int_{-3x^{1+\ee}}^{3x^{1+\ee}} \frac{\log^{O(1)}x}{1+|u|} \int_{\mathcal{C}}|E(s;\mathbf{v}) F(s;\mathbf{v})P(s+iu;\mathbf{0})| |ds|du, 
\end{equation}
for any contour $\mathcal{C}$, where we have used the estimate
\begin{align*} 
& \int_{-x^{1+\ee}}^{x^{1+\ee}} \int_{-x^{1+\ee}}^{x^{1+\ee}} \Bigl|K(t_1,t_2,u-t_1-t_2)\Bigr| dt_1 dt_2 \\
\ll & \log^{O(1)}x \int_{-x^{1+\ee}}^{x^{1+\ee}} \int_{-x^{1+\ee}}^{x^{1+\ee}} (1+|t_1|)^{-1} (1+|t_2|)^{-1} (1+|u-t_1-t_2|)^{-1} dt_1\cdots dt_2 \\
\ll & \frac{\log^{O(1)}x}{1+|u|}. 
\end{align*}
If $\chi$ is non-principal then, by the zero-free region for $L(s,\chi)$, we have
\begin{equation}
|P(s+iu; \mathbf{0)}|\ll \frac{P^{1/2}}{\log^{B}{x}}\label{eq:ZeroRegionBound}
\end{equation}
for any $B \geq 1$ and all $s,u$ occurring in the integral above. Thus, Lemma~\ref{lem:TypeIIIntegral} shows that
\[
\int_{-2\pi\lambda x-T}^{-2\pi\lambda x+T}\Bigl|\sum_{n\asymp x}\frac{c_j^{(II)}(n)\chi(n)}{n^{1/2+it}}\Bigr|dt\ll \frac{x^{1/2}}{\log^{B}{x}}.
\]
Recalling that $G_\lambda(1/2+it)\ll x^{\theta-1/2}$, we find that for $\chi\ne\chi_0$ this shows
\[
\sum_{n\in[x,x+x^\theta]}c_j^{(II)}(n)\chi(n)g(n)\e(\lambda n)\ll \frac{x^\theta}{\log^B x}.
\]

If $\chi=\chi_0$ is the principal character, then we can replace $\chi$ by 1 since $c^{(II)}_j(n)$ are supported on $(n,P(\omega))=1$. We separate the contribution from $t\in[-T_0,T_0]$, where $T_0=\exp(\log^{1/2}{x})$. From~\eqref{eq:TypeIIsum-1} we see that
\begin{align*}
\int_{1/2+iT_0}^{1/2+i(T_0+T)}\Bigl|\sum_{n\asymp x}\frac{c_j^{(II)}(n)}{n^{s}}\Bigr||ds| \ll \sup_{\mathbf{v}}\int_{-3x^{1+\varepsilon}}^{3x^{1+\varepsilon}}\frac{\log^{O(1)}{x}}{1+|u|}\int_{1/2+iT_0}^{1/2+i(T_0+T)}|E(s;\mathbf{v})F(s;\mathbf{v})P(s+iu;\mathbf{0})||ds| du.
\end{align*}
We have that the bound~\eqref{eq:ZeroRegionBound} holds whenever $|\Im(s)+u|>T_0^{1/2}$ by the zero free region for $\zeta(s)$. Thus we split the integral above into two regions according to whether $|t+u|<T_0^{1/2}$ or $|t+u|>T_0^{1/2}$, where $t=\Im(s)$. As before, when $|t+u|>T_0^{1/2}$  Lemma~\ref{lem:TypeIIIntegral} shows that the total contribution to the integral is $O(x^{1/2}\log^{-B}{x})$. In the remaining case, we see that $|t+u|<T_0^{1/2}$ implies that $|u|\gg |t|>T_0$. Thus, first integrating over $u$ and using the trivial bound $P(s+iu;\mathbf{0})\ll P^{1/2}$, we find that this part contributes a total
\[
\ll P^{1/2}T_0^{1/2+\varepsilon}\sup_{\mathbf{v}}\int_{T_0}^{T_0+T}|E(1/2+it;\mathbf{v})F(1/2+it;\mathbf{v})|\frac{dt}{t}.
\]
Splitting into dyadic ranges and applying the mean value theorem, we see that this
\[
\ll P^{1/2}T_0^{1/2+2\varepsilon}\sup_{T\ge T_0}\frac{(E+T)^{1/2}(F+T)^{1/2}}{T}\ll \frac{(EFP)^{1/2}}{T_0^{1/2-2\varepsilon}}\ll \frac{x^{1/2}}{\log^{B}{x}}
\]
for any $B \geq 1$.
Thus, since $G_\lambda(1/2+it)\ll x^{\theta-1/2}$, we see that the total contribution to~\eqref{eq:MellinExpansion} from $|\Im(s)|>T_0$ is $O(x^\theta\log^{-2A}{x})$. Hence
\begin{align*}
\sum_{n\in[x,x+x^\theta]}c_j^{(II)}(n)\e(\lambda n)g(n)=\frac{1}{2\pi i}\int_{1/2-iT_0}^{1/2+iT_0}\Bigl(\sum_{n\asymp x}\frac{c_j^{(II)}(n)}{n^s}\Bigr)G_\lambda(s)ds+O\Bigl(\frac{x^{\theta}}{\log^{B}{x}}\Bigr).
\end{align*}

For $|t|<T_0$ we use the Taylor expansion of $u^{it-1/2}$ about $u=x$ to obtain
\[
G_\lambda(1/2+it)=\int_x^{x+x^\theta} g(u)u^{it-1/2}\e(\lambda u)du=x^{it-1/2}\int_0^\infty g(u)\e(\lambda u)du+O(x^{2\theta-3/2+\varepsilon}).
\]
Thus
\[
\sum_{n\in[x,x+x^\theta]}c_j^{(II)}(n)\e(\lambda n)g(n)\chi_0(n)=\Bigl(\frac{1}{2\pi i}\int_{1/2-iT_0}^{1/2+iT_0}\Bigl(\sum_{n\asymp x}\frac{c_j^{(II)}(n)}{n^s}\Bigr)x^{s-1}ds\Bigr)\Bigl(\int_0^\infty g(u)\e(\lambda u)du\Bigr)+O\Bigl(\frac{x^{\theta}}{\log^{B}{x}}\Bigr).
\]
By applying the same argument to the interval $[x,x+h_1]$ with a smooth function $g_1$ in place of $g$, and taking $\lambda=0$, we see that
\[
\frac{1}{2\pi i}\int_{1/2-i T_0}^{1/2+i T_0}\Bigl(\sum_{n\asymp x}\frac{c_j^{(II)}(n)}{n^s}\Bigr)x^{s-1}ds=\frac{1}{\widehat{g}_1(0)}\sum_{n\in [x,x+h_1]}c_j^{(II)}(n)g_1(n)+O\Bigl(\frac{1}{\log^{B}{x}}\Bigr).
\]
This gives the result.
\end{proof}


Combining Lemma~\ref{lem:TypeI} and Lemma~\ref{lem:TypeII} along with our decomposition from Lemma~\ref{lem:SieveDecomposition} finally allows us to complete the proof of Proposition~\ref{prop:SAz}.


\begin{lemma}[Propositon~\ref{prop:SAz} for $\gamma$ in major arcs]
Let $g$ be a smooth function supported on $[x,x+x^\theta]$ with $|g^{(j)}(t)|\ll x^{-j\theta}$ for all $j\ge0$, and let $g_1$ be a smooth function supported on $[x,x+h_1]$ with $h_1=x\exp(-\log^{1/2}{x})$ and $|g_1^{(j)}(t)|\ll h_1^{-j}$ for all $j\ge0$. Let $(c,d)=1$ with $d\ll\log^{O(1)}{x}$ and $\gamma=a/q+\lambda$ with $(a,q)=1$, $q\ll \log^{5A}{x}$ and $\lambda\ll x^{1-2\theta+o(1)}$. Then we have
\[
\sum_{\substack{n\in [x,x+x^\theta] \\ n\equiv c\pmod{d} }}\rho^+(n)g(n)\e(n\gamma)=\frac{[q,d]}{\varphi([q,d])}\Bigl(\frac{1}{\widehat{g_1}(0)}\sum_{n\in [x,x+h_1]}g_1(n)\rho^+(n)\Bigr)\Bigl(\sum_{\substack{n\in [x,x+x^\theta]\\ n\equiv c\pmod{d} \\ (n,q)=1}}g(n)\e(\gamma n)\Bigr)+O\Bigl(\frac{x^\theta}{\log^{B}{x}}\Bigr)
\]
for any $B \geq 1$.
\end{lemma}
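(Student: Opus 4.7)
The plan is to reduce to Lemmas~\ref{lem:TypeI} and~\ref{lem:TypeII} by performing a Dirichlet character expansion modulo $D:=[q,d]$, and then reconcile the resulting main term with the sum on the right-hand side via Poisson summation. First I would observe that every $n$ in the support of $\rho^+$ has smallest prime factor at least $z=x^{1/10}$; since $D\ll \log^{O(1)}x$, this forces $(n,D)=1$ automatically for all such $n$ once $x$ is large. Writing $\gamma=a/q+\lambda$, the map $b\mapsto \psi(b):=\mathbf{1}_{b\equiv c\pmod d}\,\e(ba/q)$ is then a well-defined function on $(\Z/D\Z)^\ast$, which I would expand as $\psi(b)=\sum_{\chi\pmod D}\hat\psi(\chi)\bar\chi(b)$, rewriting the left-hand side as $\sum_{\chi\pmod D}\hat\psi(\chi)\sum_{n}\rho^+(n)\bar\chi(n)g(n)\e(n\lambda)$.

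Next I would apply Lemma~\ref{lem:SieveDecomposition} to decompose $\rho^+$ into $O(\log^{O(1)}x)$ Type I and Type II pieces (up to an $O(x^\theta\log^{-B}x)$ error), and feed each piece into Lemma~\ref{lem:TypeI} or Lemma~\ref{lem:TypeII} with the character $\bar\chi$ of modulus $D$. Both lemmas return a main term supported on the principal character; and using that $(n,q)=1$ is automatic on the support of $\rho^+$, the long-interval pieces reassemble into the single expression $\sum_{n\in[x,x+h_1]}g_1(n)\rho^+(n)$. Summing over the $\varphi(D)=O(\log^{O(1)}x)$ characters inflates the error only by a logarithmic factor, which is absorbed by enlarging $B$. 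This step reduces the left-hand side to
\[
\hat\psi(\chi_0)\cdot\Bigl(\int g(u)\e(\lambda u)\,du\Bigr)\Bigl(\frac{1}{\widehat{g_1}(0)}\sum_{n\in[x,x+h_1]}g_1(n)\rho^+(n)\Bigr)+O\!\Bigl(\frac{x^\theta}{\log^B x}\Bigr).
\]

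Finally, I would identify this with the target main term via Poisson summation on each residue class $b\pmod D$ satisfying $b\equiv c\pmod d$ and $(b,q)=1$: applied to the smooth weight $g(u)\e(\lambda u)$ it gives
\[
\sum_{\substack{n\in[x,x+x^\theta]\\ n\equiv c\pmod d\\ (n,q)=1}}g(n)\e(\gamma n)=\frac{1}{D}\Bigl(\sum_{\substack{b\pmod D\\ b\equiv c\pmod d\\ (b,q)=1}}\e(ba/q)\Bigr)\int g(u)\e(\lambda u)\,du+O(x^{-A})
\]
for any $A\ge 1$, since the non-zero Fourier coefficients of $g\e(\lambda\cdot)$ at scale $1/D$ decay faster than any power of $x$ thanks to $D/x^\theta\ll x^{-1/2}$ and $D\lambda\ll x^{1-2\theta}\log^{O(1)}x\ll 1$. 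The bracketed character sum equals $\varphi(D)\,\hat\psi(\chi_0)$, yielding $\hat\psi(\chi_0)\int g(u)\e(\lambda u)du=\tfrac{D}{\varphi(D)}\sum_{n\equiv c\pmod d,\,(n,q)=1}g(n)\e(\gamma n)+O(x^{-A})$, and substitution gives the claimed identity with $D/\varphi(D)=[q,d]/\varphi([q,d])$.

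I expect the main obstacle to be the careful bookkeeping in the last step: tracking the normalizations between the single character expansion modulo $D=[q,d]$ and the twin conditions ``$n\equiv c\pmod d$'' and ``$(n,q)=1$'', and verifying that the Poisson error is genuinely uniform in $\lambda$ across the allowed range. Everything else follows mechanically from the sieve decomposition and the Type I/II estimates, which at this stage are available as black boxes.
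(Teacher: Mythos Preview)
Your proposal is correct and follows essentially the same route as the paper: sieve decomposition via Lemma~\ref{lem:SieveDecomposition}, reduction to Dirichlet characters modulo $[q,d]$, application of Lemmas~\ref{lem:TypeI} and~\ref{lem:TypeII}, and then matching the principal-character contribution to the stated main term. The only cosmetic differences are that the paper performs the sieve decomposition \emph{before} the character expansion (and then justifies inserting the condition $(n,dq)=1$ into the decomposed pieces via the $L^1$ approximation), and that the paper replaces the integral $\int g(t)\e(\lambda t)\,dt$ by the residue-class sum using an Euler--Maclaurin/summation-by-parts estimate with error $O(q[d,q]|\lambda|x^\theta)$ rather than invoking Poisson summation with rapid decay; your sharper Poisson bound is of course also valid here.
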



\begin{proof}
By Lemma~\ref{lem:SieveDecomposition} we obtain sequences $c_j^{(I)}(n)$ and $c_j^{(II)}(n)$ for $j\ll \log^{O(1)}{x}$ such that
\[
\sum_{\substack{n\in [x,x+x^{\theta}]\\ n\equiv c\pmod{d} }}\rho^+(n)g(n)\e(\gamma n)=\sum_{j\le \log^{O(1)}{x} }\sum_{\substack{n\in [x,x+x^\theta]\\ n\equiv c\pmod{d} \\ (n, dq) = 1}}(c_j^{(I)}(n)+c_j^{(II)}(n))g(n)\e(\gamma n)+O\Bigl(\frac{x^\theta}{\log^B{x}}\Bigr).
\]
Here we have been able to add the summation condition $(n, dq) = 1$ to the right hand side since the left hand side is supported on $(n, dq) = 1$. Since the same decomposition holds also with $x^\theta$ replaced by $h_1$ and $g$ replaced by $g_1$, it suffices to prove the desired equality with $\rho^+$ replaced by $c_j^{(I)}$ and $c_j^{(II)}$ and with additional summation condition $(n, dq) = 1$. 

For the type-I coefficients, we have
\begin{equation}\label{eq:minor-1} 
\sum_{\substack{n \in [x,x+x^{\theta}] \\ n \equiv c\pmod{d} \\ (n, dq) = 1}} c_j^{(I)}(n) g(n) \e(\gamma n) = \sum_{\substack{r\pmod{[d,q]} \\ r\equiv c\pmod{d} \\ (r, dq) = 1}} \e(ar/q) \sum_{\substack{n \in [x,x+x^{\theta}] \\ n\equiv r\pmod{[d,q]}}} c_j^{(I)}(n) g(n) \e(\lambda n). 
\end{equation}
Since $(r, dq) = 1$, by orthogonality of characters we obtain
\[
\sum_{\substack{n \in [x,x+x^{\theta}] \\ n\equiv r\pmod{[d,q]}}} c_j^{(I)}(n) g(n) \e(\lambda n) = \frac{1}{\varphi([d,q])} \sum_{\chi\pmod{[d,q]}} \overline{\chi(r)} \sum_{n\in [x,x+x^{\theta}]} c_j^{(I)}(n) \chi(n) g(n) \e(\lambda n). 
\]
The inner sum over $n$ can be evaluated using Lemma~\ref{lem:TypeI} to be
\[ 
\delta_{\chi} \Bigl( \int_0^{\infty} g(t)\e(\lambda t)dt \Bigr) \Bigl( \frac{1}{\widehat{g_1}(0)} \sum_{\substack{n \in [x,x+h_1] \\ (n, dq) = 1}} c_j^{(I)}(n) g_1(n) \Bigr) + O\Bigl( \frac{x^{\theta}}{\log^{B}x} \Bigr).
\]
Hence
\[
\begin{split}
&\sum_{\substack{n \in [x,x+x^{\theta}] \\ n \equiv c\pmod{d}}} c_j^{(I)}(n) g(n) \e(\gamma n) \\
&= \frac{1}{\varphi([d, q])} \sum_{\substack{r\pmod{[d,q]} \\ r\equiv c\pmod{d} \\ (r, dq) = 1}} \e(ar/q) \Bigl( \int_0^{\infty} g(t)\e(\lambda t)dt \Bigr) \Bigl( \frac{1}{\widehat{g_1}(0)} \sum_{\substack{n \in [x,x+h_1] \\ (n, dq) = 1}} c_j^{(I)}(n) g_1(n) \Bigr) + O\Bigl( \frac{x^{\theta}}{\log^{B}x} \Bigr).
\end{split}
\]
Here
\[
\begin{split}
\sum_{\substack{r\pmod{[d,q]} \\ r\equiv c\pmod{d} \\ (r, dq) = 1}} \e(ar/q) \Bigl( \int_0^{\infty} g(t)\e(\lambda t)dt \Bigr) &= \sum_{\substack{r\pmod{[d,q]} \\ r\equiv c\pmod{d} \\ (r, q) = 1}} \e(ar/q) \cdot [d, q] \sum_{\substack{n \in [x, x+x^\theta] \\ n \equiv r \pmod{[d, q]}}} g(n)\e(\lambda n) + O(q [d, q] |\lambda|  x^{\theta}) \\
&= [d, q] \sum_{\substack{n \in [x, x+x^\theta] \\ n \equiv c \pmod{d} \\ (n, q) = 1}} g(n)\e(\gamma n) + O\left(\frac{x^\theta}{(\log x)^B}\right)
\end{split}
\]
and the claim follows for type I coefficients. The type II coefficients can be treated completely analogously, replacing the application of Lemma~\ref{lem:TypeI} by an application of Lemma~\ref{lem:TypeII}.
\end{proof}


\bibliographystyle{plain}
\bibliography{biblio}

\end{document}